\DeclareMathOperator{\ad}{ad}
\DeclareMathOperator{\diag}{diag}
\DeclareMathOperator{\Ad}{Ad}
\DeclareMathOperator{\Lin}{Lin}
\DeclareMathOperator{\Aut}{Aut}
\DeclareMathOperator{\trace}{trace}
\DeclareMathOperator{\End}{End}
\renewenvironment{proof}[1][Proof]{\textbf{#1.} }
{\ \rule{0.5em}{0.5em}}
\newtheorem{theorem}{Theorem}
\newtheorem{prop}{Proposition}
\newtheorem{lemma}{Lemma}
\newtheorem{corollary}{Corollary}
\newtheorem{quest}{Question}
\theoremstyle{definition}
\newtheorem{definition}{Definition}
\newtheorem{remark}{Remark}
\newtheorem{example}{Example}
\begin{document}

\title
[On the structure of geodesic orbit \dots] {On the structure of geodesic orbit Riemannian  spaces}

\author{Yu.G.~Nikonorov}

\begin{abstract}

The paper is devoted to the study of geodesic orbit Riemannian spaces that could be characterize by the property that
any geodesic is an orbit of a 1-parameter group of isometries. In particular, we discuss some important totally geodesic submanifolds that inherit
the property to be geodesic orbit. For a given geodesic orbit Riemannian space, we describe the structure of  the nilradical and the radical of the Lie
algebra of the isometry group. In the final part, we discuss some
new tools to study geodesic orbit Riemannian spaces, related to compact Lie group representations with non-trivial principal isotropy algebras.
We discuss also some new examples of geodesic orbit Riemannian spaces, new methods to obtain such examples, and some unsolved questions.

\vspace{2mm} \noindent 2010 Mathematical Subject Classification:
53C20, 53C25, 53C35.

\vspace{2mm} \noindent Key words and phrases: homogeneous Riemannian manifolds, symmetric spaces, homogeneous spaces, geodesic orbit Riemannian spaces.
\end{abstract}

\maketitle

\section{Introduction, notation and useful facts}

All manifolds in this paper are supposed to be connected.
At first, we recall and discuss important definitions.

\begin{definition}\label{GOman}
A Riemannian manifold $(M,g)$ is called a  manifold with
homogeneous geodesics or geodesic orbit manifold (shortly,  {\it GO-manifold}) if any
geodesic $\gamma $ of $M$ is an orbit of a 1-parameter subgroup of
the full isometry group of $(M,g)$.
\end{definition}

\begin{definition}\label{GOsp}
A Riemannian manifold $(M=G/H,g)$, where $H$ is a compact subgroup
of a Lie group $G$ and $g$ is a $G$-invariant Riemannian metric,
is called a space with homogeneous geodesics or geodesic orbit space
(shortly,  {\it GO-space}) if any geodesic $\gamma $ of $M$ is an orbit of
a 1-parameter subgroup of the group $G$.
\end{definition}

Hence, a Riemannian manifold $(M,g)$ is  a geodesic orbit Riemannian manifold,
if it is a geodesic orbit space with respect to its full connected isometry group.
This terminology was introduced in
\cite{KV} by O.~Kowalski and L.~Vanhecke, who initiated a systematic study of such spaces.

Let $(M=G/H, g)$ be a homogeneous Riemannian manifold. Since $H$
is compact, there is an $\Ad(H)$-invariant decomposition
\begin{equation}\label{reductivedecomposition}
\mathfrak{g}=\mathfrak{h}\oplus \mathfrak{m},
\end{equation}
where $\mathfrak{g}={\rm Lie }(G)$ and $\mathfrak{h}={\rm Lie}(H)$.
The Riemannian metric $g$ is $G$-invariant and is determined
by an $\Ad(H)$-invariant inner product $g = (\cdot,\cdot)$ on
the space $\mathfrak{m}$ which is identified with the tangent
space $T_oM$ at the initial point $o = eH$.

By $[\cdot, \cdot]$ we denote the Lie bracket in $\mathfrak{g}$, and by
$[\cdot, \cdot]_{\mathfrak{m}}$ its $\mathfrak{m}$-component according to (\ref{reductivedecomposition})
We recall (in the above terms) a well-known criteria of GO-space.

\begin{lemma}[\cite{KV}]\label{GO-criterion}
A homogeneous Riemannian manifold   $(M=G/H,g)$ with the reductive
decomposition  {\rm (\ref{reductivedecomposition})} is a geodesic orbit space if and
only if  for any $X \in \mathfrak{m}$ there is $Z \in \mathfrak{h}$ such that
$([X+Z,Y]_{\mathfrak{m}},X) =0$ for all $Y\in \mathfrak{m}$.
\end{lemma}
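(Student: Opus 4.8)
The plan is to reduce the statement to the so-called geodesic lemma, which characterizes those $W \in \mathfrak{g}$ whose orbit through $o = eH$ is a geodesic. First I would use homogeneity: since $G$ acts transitively by isometries and carries geodesics to geodesics and one-parameter orbits to one-parameter orbits, it suffices to treat geodesics $\gamma$ with $\gamma(0) = o$. To each $W \in \mathfrak{g}$ I associate the Killing field $W^{*}$ on $M$ generated by the flow $\exp(tW)$, normalized so that $W^{*}_{o} = W_{\mathfrak{m}}$ under the identification $\mathfrak{m} \cong T_{o}M$ from (\ref{reductivedecomposition}). The orbit $t \mapsto \exp(tW)\cdot o$ is precisely an integral curve of $W^{*}$, so it is a geodesic if and only if $\nabla_{W^{*}}W^{*} = 0$ along it; since $\exp(tW)$ is an isometry commuting with its own Killing field $W^{*}$, the quantity $\nabla_{W^{*}}W^{*}$ is equivariant along the orbit, and it is enough to verify $(\nabla_{W^{*}}W^{*})_{o} = 0$.

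The heart of the argument is to compute this covariant acceleration at $o$. I would use the standard identity for a Killing field $K$, namely $\nabla_{K}K = -\tfrac{1}{2}\operatorname{grad}|K|^{2}$, which follows from the skew-symmetry of $Y \mapsto \nabla_{Y}K$. Testing against $Y^{*}$ for $Y \in \mathfrak{m}$ gives $(\nabla_{W^{*}}W^{*}, Y^{*})_{o} = -\tfrac{1}{2}\,Y^{*}_{o}\bigl(|W^{*}|^{2}\bigr)$. Expanding $Y^{*}(|W^{*}|^{2}) = 2(\nabla_{Y^{*}}W^{*}, W^{*})$, rewriting $\nabla_{Y^{*}}W^{*} = \nabla_{W^{*}}Y^{*} + [Y^{*},W^{*}]$, and using once more that $(\nabla_{W^{*}}Y^{*}, W^{*}) = 0$ by skew-symmetry, the computation collapses to the value of the vector-field bracket at $o$. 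With the convention $[Y^{*},W^{*}] = -[Y,W]^{*}$ for fundamental fields, this yields $(\nabla_{W^{*}}W^{*}, Y^{*})_{o} = -([W,Y]_{\mathfrak{m}}, W_{\mathfrak{m}})$. Since the vectors $Y^{*}_{o}$ with $Y \in \mathfrak{m}$ exhaust $T_{o}M$, I conclude the geodesic lemma: the orbit of $\exp(tW)$ through $o$ is a geodesic if and only if $([W,Y]_{\mathfrak{m}}, W_{\mathfrak{m}}) = 0$ for all $Y \in \mathfrak{m}$.

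It then remains to assemble the equivalence. For the forward direction, assume $(M,g)$ is a GO-space and fix $X \in \mathfrak{m}$. The geodesic with initial data $(o, X)$ is an orbit of some $\exp(tW)$, and matching initial velocities forces $W_{\mathfrak{m}} = X$, i.e. $W = X + Z$ with $Z = W_{\mathfrak{h}} \in \mathfrak{h}$; the geodesic lemma gives $([X+Z,Y]_{\mathfrak{m}}, X) = 0$ for all $Y \in \mathfrak{m}$. Conversely, given the algebraic condition, for an arbitrary geodesic I translate it by $G$ so that $\gamma(0) = o$, set $X = \dot\gamma(0) \in \mathfrak{m}$, and choose the corresponding $Z \in \mathfrak{h}$. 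Then $W = X + Z$ satisfies the geodesic lemma, so $t \mapsto \exp(tW)\cdot o$ is a geodesic; as it shares the initial point $o$ and initial velocity $X$ with $\gamma$, uniqueness of geodesics identifies the two, exhibiting $\gamma$ as a one-parameter orbit.

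I expect the main obstacle to be the geodesic-lemma computation of the second paragraph, where one must handle the sign conventions for fundamental vector fields correctly and invoke the skew-symmetry of $\nabla \cdot K$ twice; the surjectivity $\{Y^{*}_{o} : Y \in \mathfrak{m}\} = T_{o}M$ and the reduction of an arbitrary geodesic to one through $o$ are routine consequences of transitivity and are the easy parts.
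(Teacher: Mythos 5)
Your proof is correct: the Killing-field computation giving $(\nabla_{W^*}W^*,Y^*)_o=-([W,Y]_{\mathfrak{m}},W_{\mathfrak{m}})$ is accurate (including the sign convention for fundamental fields), and the two directions of the equivalence are assembled properly. The paper itself offers no proof of this lemma — it is imported from \cite{KV} — and your argument is essentially the standard one from that source, i.e.\ the ``geodesic lemma'' characterizing geodesic vectors, which is precisely the content of the paper's remark following the lemma that the GO-property for $X$ means the orbit of $\exp\bigl(t(X+Z)\bigr)$ through $o$ is a geodesic.
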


In what follows, the latter condition in this lemma will be called the {\it GO-property}. Recall that for a given $X\in\mathfrak{m}$, this means that the orbit
of  $\exp\bigl((X+Z)t\bigr) \subset G$, $t \in \mathbb{R}$, through the point $o=eH$ is a geodesic in $(M=G/H,g)$.
Note also that all orbits of a 1-parameter isometry group, generated by a Killing vector fields of constant length on a given Riemannian manifold,
are geodesics, see  \cite{BerNikKF, Nik2015}
and the references therein.
\medskip

There are some important subclasses of geodesic orbit manifolds.
Indeed, GO-spaces  may be considered as  a natural generalization of Riemannian symmetric spaces,
introduced and classified by \`{E}.~Cartan in \cite{Ca}.
On the other hand, the class of GO-spaces is much larger than the class of symmetric spaces.
Any homogeneous space
$M = G/H$ of a compact Lie group $G$  admits a Riemannian metric $g$ such that $(M,g)$ is a GO-space.
It suffices to take the metric
$g$ induced by a biinvariant Riemannian metric $g_0$ on the Lie group  $G$ such that
$ (G,g_0) \to (M=G/H, g)$ is a Riemannian submersion
with totally geodesic fibres. Such geodesic orbit space $(M = G/H, g)$ is called  a {\it normal homogeneous space}
(in the sense of M.~Berger \cite{Berg}).

It should be noted also that  any  naturally  reductive  Riemannian manifold is geodesic orbit.
Recall that a Riemannian manifold $(M,g)$ is
{\it naturally reductive} if it admits a transitive Lie group $G$ of isometries with a biinvariant
pseudo-Riemannian metric $g_0$, which induces the metric $g$ on $M = G/H$ (see  \cite{Bes} and \cite{KN}).
Clear that symmetric spaces and normal homogeneous spaces are naturally reductive.
The classification of  naturally reductive homogeneous spaces of $\dim \leq 5$ was
obtained by O. Kowalski and L.~Vanhecke in 1985 \cite{KV1}.
New approach was suggested by
I. Agricola, A.C. Ferreira, and T. Friedrich  in the paper \cite{AFF}.
This approach allowed to get classification of naturally reductive homogeneous spaces of $\dim \leq 6$ \cite{AFF}.
Recent interesting results on naturally reductive homogeneous spaces one can find on \cite{AF, Storm} and in the references therein.

The first example of non naturally reductive GO-manifold had been constructed by  A.~Kaplan \cite{Kap}.
In \cite{KV}, O.~Kowalski and L.~Vanhecke classified all geodesic orbit spaces
of dimension $\leq 6$. In particular, they proved that every GO-manifold of dimension $\leq 5$ is naturally reductive.

An important class of GO-spaces  consists  of weakly symmetric spaces,  introduced by A.~Selberg \cite{S}.
A homogeneous Riemannian manifold  $(M = G/H, g)$ is a {\it  weakly symmetric space}
if any two points $p,q \in M$ can be interchanged by
an isometry $a \in G$. This  property does not depend on the particular $G$-invariant metric $g$.
Weakly symmetric spaces $M= G/H$
have many interesting properties
and  are closely related  with spherical spaces, commutative
spaces, Gelfand pairs etc. (see papers \cite{AV, Yakimova} and book \cite{W1} by J.A.~Wolf).
The classification of weakly symmetric
reductive homogeneous  Riemannian spaces was  given by O.S.~Yakimova \cite{Yakimova} on the base of the paper \cite{AV}
(see also  \cite{W1}). Note that
{\it weakly symmetric Riemannian manifolds are geodesic orbit}
by a result of J.~Berndt, O.~Kowalski, and L.~Vanhecke \cite{BKV}.

{\it Generalized normal homogeneous Riemannian manifolds}
({\it $\delta$-homogeneous manifold}, in another terminology) constitute another important
subclass of geodesic orbit manifolds.
All metrics from this subclass are of non-negative sectional curvature and have some other interesting properties
(see details in \cite{BerNik, BerNik3, BerNik2012}).
In the paper \cite{BerNik2012}, a classification of  generalized normal
homogeneous metrics on spheres and projective spaces is obtained.
Finally, we notice  that {\it Clifford--Wolf homogeneous
Riemannian manifolds} constitute a partial subclass of generalized normal homogeneous Riemannian manifolds \cite{BerNikClif}.

C.~Gordon \cite{Gor96} developed the theory of geodesic orbit Riemannian nilmanifolds.
Remarkably, only commutative or two-step nilpotent Lie group admits geodesic orbit Riemannian metrics.
In \cite{Gor96}, one can find also some structural results on geodesic orbit spaces and some
examples of geodesic orbit Riemannian manifolds.
In the paper \cite{AA} by D.V.~Alekseevsky and A.~Arvanitoyeorgos, the classification
of  non-normal invariant geodesic orbit metrics on  simply connected flag manifolds $M =G/H$ was  given.
In the paper~\cite{AN}, this result was generalized to the  case of compact homogeneous manifolds
with positive Euler  characteristic.

Various constructions of geodesics, that are orbits of 1-parameter isometry groups, and remarkable properties of geodesic orbit metrics
could be found also in \cite{CHNT,DuKoNi,Nik2013n,Tam,Zil96}, and in the references therein.
Notice the classification of geodesic orbit spaces fibered over irreducible symmetric spaces obtained by H.~Tamaru in \cite{Ta}
and the classification of geodesic orbit metrics on spheres in \cite{Nik2013}.
One can find also some interesting results on geodesic orbit pseudo-Riemannian manifold in
\cite{Barco,ChenWolf2012,Du2010} and the references therein.
It should be noted that there are corresponding notions and results on geodesic orbit Finsler manifolds,
see the paper \cite{YanDeng} by Z.~Yan and Sh.~Deng.
\medskip

The paper is organized as follows: In Section 2 we discuss on the Lie algebra level the algebraic structure
of an arbitrary homogeneous Riemannian space, pointing out various possibilities to choose a reductive complement
to the isotropy subalgebra in the Lie algebra of the motion group. In Section 3 we discuss totally geodesic submanifolds
of geodesic orbit Riemannian spaces and their isometry groups.
Section 4 is devoted to the study of the nilradical and the radical of the Lie algebra of the motion  group of a geodesic orbit Riemannian space.
In particular, a remarkable property of the Killing form on the nilradical of a GO-space is proved in Theorem \ref{strucrad1}, and Theorem  \ref{rsubm2}
gives a natural construction of a GO-space with the radical of very special type from a given GO-space. In Section 5 we prove
Theorems \ref{centaction1} and \ref{centaction2}, that give a new
tools to the study of GO-spaces, related to compact Lie group representations with non-trivial principal isotropy algebra
(non-trivial stationary subalgebra of points in general position).
Finally, in Conclusion we discuss some unsolved questions.
\medskip

\section{On the structure of homogeneous Riemannian space}

We recall some well-known facts on the structure of homogeneous Riemannian spaces, see e.~g. \cite{BB1978, BB1981, ChEb, GorWil1985, Wolf1969}.
The description of possible reductive decompositions for a~Riemannian homogeneous space could be helpful in various problems.

Let $(G/H, \rho)$ be a homogeneous Riemannian manifold, where $G$ is a connected Lie group, $H$ is a compact subgroup in $G$, and $\rho$ is a
$G$-invariant Riemannian metric on $G/H$. We will suppose that $G$ acts effectively on $G/H$ (otherwise it is possible to
factorize by $U$, the maximal normal subgroup of $G$ in $H$).

By the symbols $R(G)$ and $N(G)$ we denote the radical
(the largest connected solvable normal subgroup)
and the largest connected nilpotent normal subgroup
of the Lie group $G$  respectively.
For the Lie algebra $\mathfrak{g}=\operatorname{Lie}(G)$, we denote by $\mathfrak{n(g)}$ and $\mathfrak{r(g)}$ the nilradical and the radical of $\mathfrak{g}$
respectively.
A maximal semi-simple subalgebra of $\mathfrak{g}$ is called a Levi factor or a~Levi subalgebra.
There is a semidirect decomposition $\mathfrak{g}=\mathfrak{r(g)}\rtimes \mathfrak{s}$, where $\mathfrak{s}$ is an arbitrary Levi factor.
The Malcev~--~Harish-Chandra theorem
states that any two Levi factors of $\mathfrak{g}$ are conjugate by an automorphism $\exp(\Ad (Z))$
of $\mathfrak{g}$, where $Z$ is in the nilradical $\mathfrak{n(g)}$ of~$\mathfrak{g}$.

Using the structure of representations of Lie algebras (see e.~g. Proposition 5.5.17 in \cite{HilNeeb}), it is easy to see that
$\mathfrak{r(g)}=[\mathfrak{s},\mathfrak{r(g)}] \oplus C_{\mathfrak{r(g)}}(\mathfrak{s})$ (the direct sum of linear subspaces),
where $C_{\mathfrak{r(g)}}(\mathfrak{s})$ is the centralizer of
$\mathfrak{s}$ in $\mathfrak{r(g)}$.
We have $[\mathfrak{s},\mathfrak{r(g)}]\subset [\mathfrak{g},\mathfrak{r(g)}]\subset \mathfrak{n(g)}$, moreover,
$D(\mathfrak{r(g)})\subset \mathfrak{n(g)}$ for every derivation
$D$ of $\mathfrak{g}$
(Theorem 2.13 and Theorem 3.7 in~\cite{Jac}).

For an arbitrary Levi factor $\mathfrak{s}$, we have
$[\mathfrak{g},\mathfrak{g}]=[\mathfrak{r(g)}+\mathfrak{s},\mathfrak{r(g)}+\mathfrak{s}]\subset \mathfrak{n(g)}\rtimes \mathfrak{s}$.
Hence, $[\mathfrak{g},\mathfrak{g}]\cap \mathfrak{r(g)}=[\mathfrak{g},\mathfrak{r(g)}]\subset \mathfrak{n(g)}$.
Note also that any Levi factor is in $\mathfrak{n(g)}\rtimes \mathfrak{s}$ and
$[\mathfrak{g},\mathfrak{g}]=[\mathfrak{g},\mathfrak{r}]\rtimes \mathfrak{s}$ for any Levi subalgebra $\mathfrak{s}$.
If $B$ is the Killing form of $\mathfrak{g}$, then
\begin{equation}\label{nilid}
\mathfrak{n(g)} \subset \{X\in \mathfrak{g}\,|\, B(X,Y)=0\, \forall\, Y\in \mathfrak{g}\}\subset
\{X\in \mathfrak{g}\,|\, B(X,Y)=0\, \forall\, Y\in [\mathfrak{g},\mathfrak{g}]\}=\mathfrak{r(g)},
\end{equation}
see e.~g. Proposition 1.4.6 in \cite{Bourb} and Theorem 3.5  in \cite{Jac}. In particular, $B([\mathfrak{g},\mathfrak{r(g)}],\mathfrak{g})=0$.
\smallskip

Recall that a subalgebra $\mathfrak{k}$ of a Lie algebra $\mathfrak{g}$ is said to
be {\it compactly embedded in} $\mathfrak{g}$ if $\mathfrak{g}$ admits an inner product relative to which the
operators $\ad(X):\mathfrak{g} \mapsto \mathfrak{g}$, $X\in \mathfrak{k}$, are skew-symmetric.
This condition is equivalent to the following condition:
the closure of $\Ad_G(\exp(\mathfrak{k}))$ in $\operatorname{Aut}(\mathfrak{g})$ is compact,
see e.~g. \cite{HilNeeb, Wust}.
Note that for a compactly embedded subalgebra $\mathfrak{k}$, every operator $\ad(X):\mathfrak{g}\rightarrow \mathfrak{g}$,
$X\in \mathfrak{k}$, is semisimple and the spectrum  of  $\ad(X)$ lies in $i\mathbb{R}$.
A subgroup $K$ of the Lie group $G$ is {\it compactly embedded} if
the closure of $\Ad_G(K)$ in $\operatorname{Aut}(\mathfrak{g})$ is compact. It is known that a connected subgroup $K$ is compactly embedded in $G$ if and only
if $\mathfrak{k}=\operatorname{Lie}(K)$ is compactly embedded in $\mathfrak{g}=\operatorname{Lie}(G)$,
and the maximal compactly embedded subgroup of a connected Lie group is closed and connected~\cite{Wust}.
Recall also that subalgebra $\mathfrak{k}$ of a Lie algebra $\mathfrak{g}$ is said to
be {\it compact} if it is compactly embedded in itself.
It is equivalent to the fact that there is a compact Lie group with a given Lie algebra $\mathfrak{k}$.
Clear that any compactly embedded subalgebra $\mathfrak{k}$ of  $\mathfrak{g}$ is compact.

\smallskip

\begin{lemma}
The Killing form $B$ of $\mathfrak{g}$ is negative semi-definite on every compactly embedded subalgebra
{\rm(}in particular, on Lie algebras of compact subgroups in $G${\rm)}
$\mathfrak{k}$ of $\mathfrak{g}$.
If the corresponding Lie subgroup $K$ is the isotropy group in the isometry group $G$ of an effective Riemannian space $(G/K, \rho)$,
then $B$  is negative definite on $\mathfrak{k}$.
\end{lemma}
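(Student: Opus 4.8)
The plan is to prove the two statements in turn, exploiting the skew-symmetry built into the definition of a compactly embedded subalgebra.

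First I would address negative semi-definiteness. By hypothesis, $\mathfrak{g}$ admits an inner product $\langle\cdot,\cdot\rangle$ relative to which every operator $\ad(X)\colon\mathfrak{g}\to\mathfrak{g}$, $X\in\mathfrak{k}$, is skew-symmetric. Fix $X\in\mathfrak{k}$. Then the Killing form evaluates as $B(X,X)=\trace\bigl(\ad(X)\circ\ad(X)\bigr)$. Writing $A=\ad(X)$, skew-symmetry of $A$ with respect to $\langle\cdot,\cdot\rangle$ gives $\trace(A^2)=-\trace(A^\top A)=-\sum_{i}\langle Ae_i,Ae_i\rangle\le 0$ for any $\langle\cdot,\cdot\rangle$-orthonormal basis $\{e_i\}$ of $\mathfrak{g}$, so $B(X,X)\le 0$. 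This proves that $B$ is negative semi-definite on $\mathfrak{k}$. Equivalently, one may note that the eigenvalues of the semisimple operator $\ad(X)$ lie in $i\mathbb{R}$, as recorded in the excerpt, so that the eigenvalues of $\ad(X)^2$ are non-positive reals and $B(X,X)$, being their sum, is $\le 0$.

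Next I would pin down when equality holds, in order to treat the isotropy case. From the computation above, $B(X,X)=0$ forces $\langle Ae_i,Ae_i\rangle=0$ for all $i$, hence $A=\ad(X)=0$, i.e.\ $X$ lies in the center $\mathfrak{z}(\mathfrak{g})$ of $\mathfrak{g}$. Thus $B$ is negative definite on $\mathfrak{k}$ precisely when $\mathfrak{k}\cap\mathfrak{z}(\mathfrak{g})=\{0\}$.

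The main point is therefore to verify $\mathfrak{k}\cap\mathfrak{z}(\mathfrak{g})=\{0\}$ in the isotropy setting, and this is where effectiveness enters. Suppose $K$ is the isotropy group of the effective homogeneous Riemannian space $(G/K,\rho)$, and take $X\in\mathfrak{k}$ with $\ad(X)=0$, so that $X\in\mathfrak{z}(\mathfrak{g})$. Then $\exp(tX)$ is central in $G$, and since $X\in\mathfrak{k}$ the subgroup $\exp(tX)$ lies in $K$ and hence fixes the base point $o=eK$; being central, $\exp(tX)$ commutes with all of $G$ and therefore fixes every point $g\cdot o=gK$ of the orbit $G/K$, i.e.\ it acts trivially on $G/K$. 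Effectiveness of the $G$-action then yields $\exp(tX)=e$ for all $t$, whence $X=0$. I expect this last step—translating centrality plus isotropy into a global triviality of the action, and then invoking effectiveness—to be the crux; the purely algebraic semi-definiteness computation is routine. Combining, $\mathfrak{k}\cap\mathfrak{z}(\mathfrak{g})=\{0\}$, so $B$ is negative definite on $\mathfrak{k}$.
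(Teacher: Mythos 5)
Your proof is correct and follows essentially the same route as the paper: the identical trace computation showing $B(X,X)=\trace\bigl(\ad(X)^2\bigr)\le 0$ for skew-symmetric $\ad(X)$, with equality exactly when $X$ is central in $\mathfrak{g}$, followed by ruling out central elements of the isotropy algebra via effectiveness. The only difference is that you spell out in detail the final step (a central element of $G$ lying in $K$ fixes every coset $gK$, hence acts trivially and must be the identity), which the paper compresses into the single sentence that centrality is impossible for the isotropy algebra of an effective space.
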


\begin{proof}
Indeed, there is an $\ad(\mathfrak{k})$-invariant inner product $Q$ on $\mathfrak{g}$ (by the definition of compactly embedded subalgebra),
hence, every operator $\ad(X):\mathfrak{g} \rightarrow \mathfrak{g}$ is skew-symmetric with respect $Q$ and $B(X,X)$
is a trace of a squared skew-symmetric matrix. Hence $B(X,X)\leq 0$ and $B(X,X)=0$ if and only if $X$ is in the center of $\mathfrak{g}$.
The latter is impossible for the isotropy algebra $\mathfrak{k}$ because $G/K$ is assumed to be effective.
\end{proof}

\bigskip

Let $G/H$ be effective homogeneous space with compact $H$. Note that $\Ad_G(H)$ (that will be also denoted as $\Ad(H)$)
is compact, hence fully reducible, group of automorphisms of the Lie algebra
$\mathfrak{g}=\operatorname{Lie}(G)$.
A fully reducible group of automorphisms of a Lie algebra
keeps a maximal semi-simple subalgebra invariant \cite{Mostov1956}.
Therefore, there is a Levi factor $\mathfrak{s}$ of  $\mathfrak{g}$ invariant with respect $\Ad_G(H)$.
Then we have  the following Levi decomposition:
\begin{equation}\label{levidec1}
\mathfrak{g}=\mathfrak{r(g)}+\mathfrak{s},
\end{equation}
where both $\mathfrak{r(g)}$ and $\mathfrak{s}$ are invariant with respect to $\Ad_G(H)$. For any $X \in \mathfrak{h}=\operatorname{Lie}(H)$,
there exists a unique decomposition

\begin{equation}\label{levidec2}
X=X_{\mathfrak{r(g)}}+X_{\mathfrak{s}},\quad \mbox{where} \quad X_{\mathfrak{r(g)}}\in \mathfrak{r(g)}, \, X_{\mathfrak{s}}\in \mathfrak{s}.
\end{equation}
Since $[X,\mathfrak{r(g)}]\subset \mathfrak{r(g)}$, $[X,\mathfrak{s}] \subset \mathfrak{s}$, and $[\mathfrak{g},\mathfrak{r(g)}]\subset \mathfrak{r(g)}$,
we get $[X_{\mathfrak{r(g)}},\mathfrak{s}]=0$ for all
$X\in \mathfrak{h}$. Hence for any $X,Y \in  \mathfrak{h}$ we get
$$
[X,Y]=[X_{\mathfrak{r(g)}}+X_{\mathfrak{s}}, Y_{\mathfrak{r(g)}}+Y_{\mathfrak{s}}]=[X_{\mathfrak{r(g)}},Y_{\mathfrak{r(g)}}]+[X_{\mathfrak{s}},Y_{\mathfrak{s}}].
$$
Therefore, we get {\it the Lie algebra homomorphisms $\varphi:\mathfrak{h} \rightarrow \mathfrak{r(g)}$ and $\psi:\mathfrak{h} \rightarrow \mathfrak{s}$}
such that
$\varphi(X)=X_{\mathfrak{r(g)}}$ and $\psi(X)=X_{\mathfrak{s}}$. Note that $\varphi(\mathfrak{h})$ is a Lie subalgebra of $\mathfrak{r(g)}$ such that
$[\varphi(\mathfrak{h}),\mathfrak{s}]=0$.

Note that $\mathfrak{h}$ is reductive, i.~e. $\mathfrak{h}=\mathfrak{c(h)}\oplus[\mathfrak{h},\mathfrak{h}]$, where
$\mathfrak{c(h)}$ is the center and $[\mathfrak{h},\mathfrak{h}]$ is a semi-simple ideal in $\mathfrak{h}$.
Since $\mathfrak{r(g)}$ is solvable Lie algebra, it can not contain any semisimple Lie subalgebra, hence $[\mathfrak{h},\mathfrak{h}] \subset
\operatorname{Ker}(\varphi) \subset \mathfrak{s}$. In particular, $\varphi(\mathfrak{h})$ is an abelian Lie algebra.

It is clear that $\mathfrak{h}+\mathfrak{s}=\mathfrak{s}\oplus \varphi(\mathfrak{h})$ is a reductive Lie algebra. Indeed, $\varphi(\mathfrak{h})$ is the center
of this algebra ($[\varphi(\mathfrak{h}),\mathfrak{s}\oplus \varphi(\mathfrak{h})]=0$), and $\mathfrak{s}$ is semisimple.
\medskip

\begin{lemma}\label{comni2}
If a linear space $\mathfrak{q}\subset \mathfrak{r(g)}$ is $\ad(\mathfrak{h})$-invariant, then
$[\mathfrak{h},\mathfrak{q}]\subset \mathfrak{q}\cap [\mathfrak{g},\mathfrak{r(g)}]  \subset \mathfrak{q}\cap\mathfrak{n(g)}$.
In particular, $\mathfrak{q}\cap \mathfrak{n(g)}=0$ or $\mathfrak{q}\cap[\mathfrak{g},\mathfrak{r(g)}]=0$ implies
$[\mathfrak{h},\mathfrak{q}]=0$.
\end{lemma}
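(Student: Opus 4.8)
The plan is to exploit the splitting of elements of $\mathfrak{h}$ into their radical and Levi components, set up just before the lemma, together with the already established inclusion $[\mathfrak{g},\mathfrak{r(g)}]\subset\mathfrak{n(g)}$. First I would fix arbitrary $X\in\mathfrak{h}$ and $Q\in\mathfrak{q}$ and write $X=\varphi(X)+\psi(X)$ according to the decomposition \eqref{levidec2}, where $\varphi(X)=X_{\mathfrak{r(g)}}\in\mathfrak{r(g)}$ and $\psi(X)=X_{\mathfrak{s}}\in\mathfrak{s}$. Then $[X,Q]=[\varphi(X),Q]+[\psi(X),Q]$, and I would treat the two summands separately.

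For the first summand, since $\varphi(X)\in\mathfrak{r(g)}$ and $Q\in\mathfrak{q}\subset\mathfrak{r(g)}$, I get $[\varphi(X),Q]\in[\mathfrak{r(g)},\mathfrak{r(g)}]\subset[\mathfrak{g},\mathfrak{r(g)}]$. For the second summand, since $\psi(X)\in\mathfrak{s}$ and $Q\in\mathfrak{r(g)}$, I get $[\psi(X),Q]\in[\mathfrak{s},\mathfrak{r(g)}]\subset[\mathfrak{g},\mathfrak{r(g)}]$. Hence $[X,Q]\in[\mathfrak{g},\mathfrak{r(g)}]$. On the other hand, the $\ad(\mathfrak{h})$-invariance of $\mathfrak{q}$ gives $[X,Q]\in\mathfrak{q}$, so $[X,Q]\in\mathfrak{q}\cap[\mathfrak{g},\mathfrak{r(g)}]$; as $X$ and $Q$ were arbitrary, this yields the first claimed inclusion $[\mathfrak{h},\mathfrak{q}]\subset\mathfrak{q}\cap[\mathfrak{g},\mathfrak{r(g)}]$. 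The second inclusion $\mathfrak{q}\cap[\mathfrak{g},\mathfrak{r(g)}]\subset\mathfrak{q}\cap\mathfrak{n(g)}$ then follows at once from the already recorded fact $[\mathfrak{g},\mathfrak{r(g)}]\subset\mathfrak{n(g)}$ (see \eqref{nilid} and the surrounding discussion).

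For the \emph{in particular} statement, I would simply observe that each of the hypotheses $\mathfrak{q}\cap\mathfrak{n(g)}=0$ and $\mathfrak{q}\cap[\mathfrak{g},\mathfrak{r(g)}]=0$ forces the intersection bounding $[\mathfrak{h},\mathfrak{q}]$ to be zero, whence $[\mathfrak{h},\mathfrak{q}]=0$.

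I do not expect any genuine obstacle here: the whole argument is a bookkeeping exercise built on the decomposition $X=\varphi(X)+\psi(X)$ and on the inclusions $[\mathfrak{r(g)},\mathfrak{r(g)}],[\mathfrak{s},\mathfrak{r(g)}]\subset[\mathfrak{g},\mathfrak{r(g)}]\subset\mathfrak{n(g)}$. The only point that needs a moment's care is to resist computing $[X,Q]$ directly and instead to split off the Levi part $\psi(X)$, since it is precisely the inclusion $[\mathfrak{s},\mathfrak{r(g)}]\subset\mathfrak{n(g)}$ that makes the nilradical enter the statement.
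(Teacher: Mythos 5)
Your proof is correct and is essentially the paper's own one-line argument: by $\ad(\mathfrak{h})$-invariance $[\mathfrak{h},\mathfrak{q}]\subset\mathfrak{q}$, while $[\mathfrak{h},\mathfrak{q}]\subset[\mathfrak{g},\mathfrak{r(g)}]\subset\mathfrak{n(g)}$, which yields both inclusions and the \emph{in particular} statement at once. The only remark: your detour through $X=\varphi(X)+\psi(X)$ is unnecessary, since $\mathfrak{h}\subset\mathfrak{g}$ and $\mathfrak{q}\subset\mathfrak{r(g)}$ already give $[X,Q]\in[\mathfrak{g},\mathfrak{r(g)}]$ directly, so --- contrary to your closing comment --- nothing is gained by splitting off the Levi part; the nilradical enters only through the recorded fact $[\mathfrak{g},\mathfrak{r(g)}]\subset\mathfrak{n(g)}$, not through $[\mathfrak{s},\mathfrak{r(g)}]\subset\mathfrak{n(g)}$ specifically.
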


\begin{proof} It suffices to use  $[\mathfrak{g},\mathfrak{r(g)}]\subset \mathfrak{n(g)}$.
\end{proof}
\smallskip

\begin{lemma}\label{h.1}
We can choose an $\Ad(H)$-invariant complement $\mathfrak{h}_1$ to
$\operatorname{Ker}(\psi)$ in $\mathfrak{h}$ such that $\mathfrak{h}_1 \cap \mathfrak{s}=\mathfrak{h} \cap \mathfrak{s}$,
in particular, $[\mathfrak{h},\mathfrak{h}]=[\mathfrak{h}_1,\mathfrak{h}_1]$.
Let $\mathfrak{h}_2$ be an $\Ad(H)$-invariant complement to $\mathfrak{h}_1 \cap \mathfrak{s}$ in such $\mathfrak{h}_1$.
Then  $\mathfrak{h}_2\subset \mathfrak{c(h)}$, i.~e. $\mathfrak{h}_2$ is central in $\mathfrak{h}$,
the Lie algebras $\mathfrak{h}_2$, $\varphi(\mathfrak{h}_2)$, and $\psi(\mathfrak{h}_2)$ are pairwise isomorphic, and the following
decompositions hold:
$$
\mathfrak{h}=\mathfrak{h}_2 \oplus (\mathfrak{h} \cap \mathfrak{r(g)}) \oplus(\mathfrak{h} \cap \mathfrak{s}), \quad
\varphi(\mathfrak{h})=\varphi(\mathfrak{h}_2)\oplus(\mathfrak{h} \cap \mathfrak{r(g)}),\quad
\psi(\mathfrak{h})=\psi(\mathfrak{h}_2)\oplus(\mathfrak{h} \cap \mathfrak{s}).
$$
\end{lemma}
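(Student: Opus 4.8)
The plan is to reduce the whole statement to a single $\Ad(H)$-invariant splitting of the center $\mathfrak{c(h)}$. First I would identify the two kernels. Writing $X=X_{\mathfrak{r(g)}}+X_{\mathfrak{s}}$, one sees at once that $\operatorname{Ker}(\psi)=\mathfrak{h}\cap\mathfrak{r(g)}$ and $\operatorname{Ker}(\varphi)=\mathfrak{h}\cap\mathfrak{s}$; both are $\Ad(H)$-invariant and $(\mathfrak{h}\cap\mathfrak{r(g)})\cap(\mathfrak{h}\cap\mathfrak{s})\subset\mathfrak{r(g)}\cap\mathfrak{s}=0$. The key preliminary observation is that $\mathfrak{h}\cap\mathfrak{r(g)}\subset\mathfrak{c(h)}$: for $X\in\mathfrak{h}\cap\mathfrak{r(g)}$ and $Y\in\mathfrak{h}$ we have $[X,Y]\in[\mathfrak{h},\mathfrak{h}]\subset\mathfrak{s}$ on the one hand and $[X,Y]\in[\mathfrak{r(g)},\mathfrak{g}]\subset\mathfrak{r(g)}$ on the other, so $[X,Y]\in\mathfrak{s}\cap\mathfrak{r(g)}=0$. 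In the same spirit, from $\mathfrak{h}=\mathfrak{c(h)}\oplus[\mathfrak{h},\mathfrak{h}]$ together with $[\mathfrak{h},\mathfrak{h}]\subset\mathfrak{s}$ one extracts $\mathfrak{h}\cap\mathfrak{s}=(\mathfrak{c(h)}\cap\mathfrak{s})\oplus[\mathfrak{h},\mathfrak{h}]$.

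Next I would build the complements inside $\mathfrak{c(h)}$. The center $\mathfrak{c(h)}$ is preserved by every automorphism of $\mathfrak{h}$, hence is $\Ad(H)$-invariant, and it contains the two $\Ad(H)$-invariant subspaces $\mathfrak{h}\cap\mathfrak{r(g)}$ and $\mathfrak{c(h)}\cap\mathfrak{s}$, whose sum is direct. Since $H$ is compact, $\Ad(H)$ is completely reducible, so I can choose an $\Ad(H)$-invariant complement $\mathfrak{h}_2$ with
$$
\mathfrak{c(h)}=(\mathfrak{h}\cap\mathfrak{r(g)})\oplus(\mathfrak{c(h)}\cap\mathfrak{s})\oplus\mathfrak{h}_2 ,
$$
and then set $\mathfrak{h}_1:=\mathfrak{h}_2\oplus(\mathfrak{h}\cap\mathfrak{s})$. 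Adding $[\mathfrak{h},\mathfrak{h}]$ back and using the identities above gives $\mathfrak{h}=\mathfrak{h}_1\oplus(\mathfrak{h}\cap\mathfrak{r(g)})$, so $\mathfrak{h}_1$ is an $\Ad(H)$-invariant complement to $\operatorname{Ker}(\psi)$; the equality $\mathfrak{h}_1\cap\mathfrak{s}=\mathfrak{h}\cap\mathfrak{s}$ is immediate because $\mathfrak{h}\cap\mathfrak{s}\subset\mathfrak{h}_1$ and $\mathfrak{h}_2\cap\mathfrak{s}=0$, and then $[\mathfrak{h}_1,\mathfrak{h}_1]=[\mathfrak{h},\mathfrak{h}]$ since the discarded summand $\mathfrak{h}\cap\mathfrak{r(g)}$ is central. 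By construction $\mathfrak{h}_2\subset\mathfrak{c(h)}$, it is an $\Ad(H)$-invariant complement to $\mathfrak{h}_1\cap\mathfrak{s}=\mathfrak{h}\cap\mathfrak{s}$ in $\mathfrak{h}_1$, and this produces the first displayed decomposition of $\mathfrak{h}$.

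For the remaining two decompositions and the isomorphism claim I would restrict $\varphi$ and $\psi$ to $\mathfrak{h}_2$. As $\mathfrak{h}_2$ is complementary in $\mathfrak{c(h)}$ to both $\mathfrak{h}\cap\mathfrak{r(g)}$ and $\mathfrak{c(h)}\cap\mathfrak{s}$, we get $\mathfrak{h}_2\cap\operatorname{Ker}(\varphi)=\mathfrak{h}_2\cap\mathfrak{s}=0$ and $\mathfrak{h}_2\cap\operatorname{Ker}(\psi)=\mathfrak{h}_2\cap\mathfrak{r(g)}=0$, so both restrictions are injective; since $\mathfrak{h}_2$ is abelian, the algebras $\mathfrak{h}_2$, $\varphi(\mathfrak{h}_2)$, $\psi(\mathfrak{h}_2)$ are abelian of the same dimension and hence pairwise isomorphic. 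Applying $\varphi$ to $\mathfrak{h}=\mathfrak{h}_2\oplus(\mathfrak{h}\cap\mathfrak{r(g)})\oplus(\mathfrak{h}\cap\mathfrak{s})$ and using $\varphi|_{\mathfrak{h}\cap\mathfrak{r(g)}}=\operatorname{id}$, $\varphi|_{\mathfrak{h}\cap\mathfrak{s}}=0$ gives $\varphi(\mathfrak{h})=\varphi(\mathfrak{h}_2)+(\mathfrak{h}\cap\mathfrak{r(g)})$; the sum is direct, for if $\varphi(X)\in\mathfrak{h}\cap\mathfrak{r(g)}$ with $X\in\mathfrak{h}_2$, then $X_{\mathfrak{r(g)}}=\varphi(X)$ and $X_{\mathfrak{s}}=X-X_{\mathfrak{r(g)}}$ both lie in $\mathfrak{c(h)}$, forcing $X\in(\mathfrak{h}\cap\mathfrak{r(g)})\oplus(\mathfrak{c(h)}\cap\mathfrak{s})$ and thus $X=0$. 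The decomposition of $\psi(\mathfrak{h})$ follows symmetrically. The one point that really needs care — and the main obstacle — is to carry out the splitting \emph{inside} the center $\mathfrak{c(h)}$ so that $\mathfrak{h}_2$ is genuinely central: an arbitrary $\Ad(H)$-invariant complement to $\mathfrak{h}\cap\mathfrak{s}$ in $\mathfrak{h}_1$ need not lie in $\mathfrak{c(h)}$ when $\Ad(H)$ fixes nonzero vectors of $[\mathfrak{h},\mathfrak{h}]$, so the centrality must be built into the choice; everything else is bookkeeping with the direct sums.
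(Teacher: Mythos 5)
Your construction itself is sound: the preliminary identities $\operatorname{Ker}(\psi)=\mathfrak{h}\cap\mathfrak{r(g)}\subset\mathfrak{c(h)}$ and $\mathfrak{h}\cap\mathfrak{s}=(\mathfrak{c(h)}\cap\mathfrak{s})\oplus[\mathfrak{h},\mathfrak{h}]$ are proved correctly, the splitting $\mathfrak{c(h)}=(\mathfrak{h}\cap\mathfrak{r(g)})\oplus(\mathfrak{c(h)}\cap\mathfrak{s})\oplus\mathfrak{h}_2$ exists by compactness of $H$, and the verification of the three displayed decompositions and of the injectivity of $\varphi$ and $\psi$ on your $\mathfrak{h}_2$ is correct. (The paper builds $\mathfrak{h}_1$ slightly differently, as $(\mathfrak{h}\cap\mathfrak{s})\oplus\mathfrak{l}$ with $\mathfrak{l}$ an invariant complement to $(\mathfrak{h}\cap\mathfrak{s})\oplus\operatorname{Ker}(\psi)$ in $\mathfrak{h}$, but your $\mathfrak{h}_1$ serves equally well for the existential part of the statement.)

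The genuine gap is your closing remark, and it is not a side issue. The lemma says: \emph{let} $\mathfrak{h}_2$ \emph{be an} $\Ad(H)$-\emph{invariant complement to} $\mathfrak{h}_1\cap\mathfrak{s}$ \emph{in such} $\mathfrak{h}_1$; \emph{then} $\mathfrak{h}_2\subset\mathfrak{c(h)}$, etc. That is, the conclusions are asserted for an \emph{arbitrary} invariant complement, not merely for one engineered to lie inside the center. You prove only the existential version, and you justify this weakening by claiming that an arbitrary $\Ad(H)$-invariant complement "need not lie in $\mathfrak{c(h)}$ when $\Ad(H)$ fixes nonzero vectors of $[\mathfrak{h},\mathfrak{h}]$". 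That claim is false, and the centrality is in fact automatic — which is exactly how the paper argues. Indeed, $\Ad(H)$-invariance of $\mathfrak{h}_2$ implies $\ad(\mathfrak{h})$-invariance, so $[\mathfrak{h},\mathfrak{h}_2]\subset\mathfrak{h}_2$; on the other hand $[\mathfrak{h},\mathfrak{h}_2]\subset[\mathfrak{h},\mathfrak{h}]\subset\mathfrak{h}\cap\mathfrak{s}=\mathfrak{h}_1\cap\mathfrak{s}$, and $\mathfrak{h}_2\cap(\mathfrak{h}_1\cap\mathfrak{s})=0$ because $\mathfrak{h}_2$ is a complement to $\mathfrak{h}_1\cap\mathfrak{s}$. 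Hence $[\mathfrak{h},\mathfrak{h}_2]=0$, i.e. \emph{every} $\Ad(H)$-invariant complement is central. (Your proposed obstruction is moreover vacuous: an $\Ad(H)$-fixed vector of $[\mathfrak{h},\mathfrak{h}]$ would be $\ad(\mathfrak{h})$-fixed, hence would lie in $\mathfrak{c(h)}\cap[\mathfrak{h},\mathfrak{h}]=0$.) To prove the lemma as stated, either insert this one-line invariance argument so that $\mathfrak{h}_2$ may be arbitrary, or note that once one invariant complement is central and the decompositions hold for it, you still owe the same conclusions for every other invariant choice of $\mathfrak{h}_2$ — which again requires that argument.
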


\begin{proof}
The space $\mathfrak{h} \cap \mathfrak{s}$ is $\Ad(H)$-invariant and $(\mathfrak{h} \cap \mathfrak{s})\cap \operatorname{Ker}(\psi)=0$.
Let us $\mathfrak{l}$ be any $\Ad(H)$-invariant complement to $(\mathfrak{h} \cap \mathfrak{s})\oplus \operatorname{Ker}(\psi)$ in $\mathfrak{h}$.
Then we can take $\mathfrak{h}_1:=(\mathfrak{h} \cap \mathfrak{s})\oplus \mathfrak{l}$. Since
$[\mathfrak{h},\mathfrak{h}]\subset \mathfrak{s}$ we see that
$\mathfrak{l}$ is in the center of $\mathfrak{h}$. From this we get $[\mathfrak{h},\mathfrak{h}]=[\mathfrak{h}_1,\mathfrak{h}_1]$.

It is clear that $\operatorname{Ker}(\psi)$ and $\mathfrak{h}_1$ are ideals in $\mathfrak{h}$, hence
$\mathfrak{h}=\operatorname{Ker}(\psi)\oplus \mathfrak{h}_1$ as Lie algebras.
Obviously, the restriction of $\psi$ to $\mathfrak{h}_1$ is one-to-one.  Moreover,
$\operatorname{Ker}(\psi) \subset \varphi(\mathfrak{h})$ and
$[\operatorname{Ker}(\psi), \mathfrak{h}]\subset [\varphi(\mathfrak{h}), \varphi(\mathfrak{h})\oplus \mathfrak{s}]=0$, hence $\operatorname{Ker}(\psi)$
is central in $\mathfrak{h}$. Note also that the Lie algebras $\mathfrak{h}_1$ and $\psi(\mathfrak{h})$ are isomorphic (under $\psi$).

It is clear that the subalgebra $\mathfrak{h}_1 \cap \mathfrak{s}= \mathfrak{h} \cap \mathfrak{s}$ is $\Ad(H)$-invariant.
Since $[\mathfrak{h}_1,\mathfrak{h}_1]\subset \mathfrak{h}_1$, $[\mathfrak{h},\mathfrak{h}]=[\mathfrak{h}_1,\mathfrak{h}_1]$, and
$[\mathfrak{h},\mathfrak{h}]\subset \mathfrak{s}$, we have
$[\mathfrak{h},\mathfrak{h}]=[\mathfrak{h}_1,\mathfrak{h}_1] \subset
\mathfrak{h}_1 \cap \mathfrak{s}$.
In particular, $[\mathfrak{h}_2, \mathfrak{h}]\subset \mathfrak{h}_1 \cap \mathfrak{s}$.
Since $\mathfrak{h}_2$ is an $\Ad(H)$-invariant complement to $\mathfrak{h}_1 \cap \mathfrak{s}$ in $\mathfrak{h}_1$, then
$[\mathfrak{h}_2, \mathfrak{h}]\subset \mathfrak{h}_2$, hence,
$\mathfrak{h}_2\subset \mathfrak{c(h)}$, i.~e. $\mathfrak{h}_2$ is central in $\mathfrak{h}$.
It is clear also that $\mathfrak{h}_2$, $\varphi(\mathfrak{h}_2)$, and $\psi(\mathfrak{h}_2)$ are pairwise isomorphic.
Indeed, $\psi$ is one-to-one even on $\mathfrak{h}_1$, but if $X\in \mathfrak{h}_2$ and $\varphi(X)=0$,
then $x = \psi(X)\in \mathfrak{h} \cap \mathfrak{r(g)}$, that is also impossible. Hence, we get
$\mathfrak{h}=\mathfrak{h}_2 \oplus (\mathfrak{h} \cap \mathfrak{r(g)}) \oplus(\mathfrak{h} \cap \mathfrak{s})$,
$\varphi(\mathfrak{h})=\varphi(\mathfrak{h}_2)\oplus(\mathfrak{h} \cap \mathfrak{r(g)})$, and
$\psi(\mathfrak{h})=\psi(\mathfrak{h}_2)\oplus(\mathfrak{h} \cap \mathfrak{s})$.
\end{proof}
\medskip

Now we can describe some useful types of reductive decomposition (\ref{reductivedecomposition}), using the above constructions.
The subspaces $\operatorname{Im}(\psi)=\psi(\mathfrak{h})$ and $\operatorname{Ker}(\psi)=\mathfrak{h} \cap \mathfrak{r(g)}$  in $\mathfrak{g}$
are $\Ad(H)$-invariant.
Now, consider any $\Ad(H)$-invariant complement $\mathfrak{m}_1$ to $\operatorname{Ker}(\psi)$ in $\mathfrak{r(g)}$ and
any $\Ad(H)$-invariant complement $\mathfrak{m}_2$ to $\operatorname{Im}(\psi)$ in $\mathfrak{s}$.
{\it Then $\mathfrak{m}:=\mathfrak{m}_1\oplus \mathfrak{m}_2$ is
an $\Ad(H)$-invariant complement to $\mathfrak{h}$ in~$\mathfrak{g}$ according to} Lemma \ref{h.1}.
\medskip

Sometimes,  it would be better (for some technical reasons) to deal with a case, when $\mathfrak{n(g)}\subset \mathfrak{m}_1$.
It is possible by the following reason.
The Killing form $B$ is negative on $\mathfrak{h}$ and zero on $\mathfrak{n(g)}$, therefore, $\mathfrak{h}\cap \mathfrak{n(g)}=0$.
On the other hand, $\mathfrak{n(g)}$ is a characteristic ideal in $\mathfrak{g}$, hence is invariant with respect to $\Ad(H)$.
Now, we can define an $\Ad(H)$-invariant complement $\mathfrak{u}$ to  $\mathfrak{n(g)} \oplus \operatorname{Ker}(\psi)$ in $\mathfrak{r(g)}$ and put
$\mathfrak{m}_1:=\mathfrak{n(g)} \oplus \mathfrak{u}$. Since $[\mathfrak{g},\mathfrak{r(g)}]\subset \mathfrak{n(g)}$ we get $[\mathfrak{h},\mathfrak{u}]=0$.

\begin{remark}\label{rem1}
There is another natural way to choose $\mathfrak{m}_1$ such that $\mathfrak{n(g)}\subset \mathfrak{m}_1$: let us consider
$\mathfrak{m}_1^{\prime}:=\{X\in \mathfrak{g}\,|\, B(X,Y)=0\,\,\,\forall\, Y\in \mathfrak{s}+\mathfrak{h}\}$. It is clear that
$\mathfrak{n(g)}\subset \mathfrak{m}_1^{\prime}\subset\mathfrak{r(g)}$ by~(\ref{nilid}) because
$[\mathfrak{g},\mathfrak{g}]\subset\mathfrak{n(g)}\oplus \mathfrak{s}$ and
$$
\mathfrak{r(g)}=\{X\in \mathfrak{g}\,|\, B(X,Y)=0\,\,\, \forall\, Y\in [\mathfrak{g},\mathfrak{g}]\}=
\{X\in \mathfrak{g}\,|\, B(X,Y)=0\,\,\, \forall\, Y\in \mathfrak{s}\},
$$
because $B(\mathfrak{n(g)},\mathfrak{g})=0$.

Further, $\mathfrak{s}+\mathfrak{h}$ is $\Ad(H)$-invariant, hence, $\mathfrak{m}_1^{\prime}$ has the same property.
The Killing form $B$ is negatively definite on $\operatorname{Ker}(\psi)=\mathfrak{h} \cap \mathfrak{r(g)}$, therefore,
$\mathfrak{m}_1^{\prime}\cap \operatorname{Ker}(\psi)=0$.
Now let us consider $\mathfrak{m}_1^{\prime \prime}$, an $\Ad(H)$-invariant complement to $\mathfrak{m}_1^{\prime} \oplus \operatorname{Ker}(\psi)$ in
$\mathfrak{r(g)}$. Then we can put $\mathfrak{m}_1:=\mathfrak{m}_1^{\prime}\oplus \mathfrak{m}_1^{\prime \prime}$.
It is clear that $\mathfrak{m}_1^{\prime}=\{X\in \mathfrak{r(g)}\,|\, B(X,Y)=0\, \forall\, Y\in \varphi(\mathfrak{h})\}$.

Let $\mathfrak{l}$ be the $B$-orthogonal complement to $\operatorname{Ker}(\psi)=\mathfrak{h} \cap \mathfrak{r(g)}$ in $\mathfrak{h}$.
Note that for any $Y\in \operatorname{Ker}(\psi)$ and any $X\in \mathfrak{l}$, we get $0=B(Y,X)=B(Y,\psi(X)+\varphi(X))=B(Y,\varphi(X))$, because
$B(\mathfrak{r(g)},\mathfrak{s})=0$.
\end{remark}

\begin{example}\label{nilradspesex}
It should be noted that there are reductive decompositions (\ref{reductivedecomposition}) such that $\mathfrak{n(g)}\not\subset \mathfrak{m}$.
Let us consider the homogeneous space $G/H=U(2)/S^1=(SU(2)\times S^1)/S^1$,
where the isotropy group is embedded as follows:
$$
H=S^1=\diag(S^1)\subset S^1\times S^1 \subset SU(2)\times S^1,
$$
where the embedding of the second multiple $S^1$
in the product $S^1 \times S^1$ is identical, and the embedding of the first multiple
is defined by some fixed embedding
$i:S^1 \rightarrow SU(2)$.
Since all circles (maximal tori) in the group $SU(2)$ are pairwise conjugate
under the adjoint action of $SU(2)$, then
we get a unique homogeneous space (up to homogeneous spaces morphism) diffeomorphic to $S^3$.
In the Lie algebra $\mathfrak{g}=su(2) \oplus \mathbb{R}$, we choose vectors $e_1,\dots,e_4$ such that
$e_i \in su(2)$ for $1\leq i \leq 3$, $e_4 \in \mathbb{R}$,
$\mathfrak{h}=\operatorname{Lie}(H)=\Lin\{e_3+e_4\}$,
$[e_1,e_2]=e_3$, $[e_2,e_3]=e_1$, $[e_3,e_1]=e_2$.
Let us fix an $\Ad(G)$-invariant inner product $\langle \cdot,\cdot \rangle$ on $\mathfrak{g}$  such that
the vectors $e_i$ ($i=1,\dots, 4$) form an orthonormal basis  with respect to this inner product.
For any  $\alpha >0$ consider
a non-degenarate $\Ad(G)$-invariant quadratic form
$$
(\cdot,\cdot)_{\alpha}=\langle \cdot,\cdot \rangle|_{su(2)}+ \alpha \langle \cdot ,\cdot \rangle |_{\mathbb{R}}
$$
on $\mathfrak{g}$. Now, consider the orthogonal complement $\mathfrak{m}$ to $\mathfrak{h}$ in $\mathfrak{g}$
with respect to $(\cdot,\cdot)_{\alpha}$, that defines a reductive decomposition of $\mathfrak{g}$. The restriction
$(\cdot,\cdot)_{\alpha}$ to $\mathfrak{m}$ defines a normal homogeneous, hence, a geodesic orbit
metric, on the space $G/H=S^3$ for all positive $\alpha$, see a more detailed discussion e.~g. in \cite{Nik2013}.
Note that $\mathfrak{n(g)}=\Lin\{e_4\} \not\subset \mathfrak{m}$ for any $\alpha>0$, since $(\mathfrak{h},e_4)_{\alpha}\neq 0$.

It is clear that there are similar reductive decompositions for all compact homogeneous spaces $G/H$ with non-semisimple groups $G$.
\end{example}

Sometimes it is reasonable to use the following $\Ad(H)$-invariant complement to $\mathfrak{h}$ in $\mathfrak{g}$:
\begin{equation}\label{compkill1}
\mathfrak{m}=\{X\in \mathfrak{g}\,|\, B(X,Y)=0\, \, \mbox{ for all } Y\in \mathfrak{h}\}.
\end{equation}
Since $B$ is negatively definite on $\mathfrak{h}$, $\mathfrak{m}$ is a complement to $\mathfrak{h}$ in $\mathfrak{g}$.
Note also that $\mathfrak{n(g)}\subset\mathfrak{m}$ due to (\ref{nilid}). Advantages of this reductive decomposition follow from the
next lemma.

\begin{lemma}\label{h.2}
If $H$ is a compact subgroup in $G$, then
$\mathfrak{m}:=\{X\in \mathfrak{g}\,|\,B(X,\mathfrak{h})=0\}$ is an $\Ad(H)$-invariant complement to $\mathfrak{h}$ in $\mathfrak{g}$.
Consider the operator $A:\mathfrak{m} \rightarrow \mathfrak{m}$ such that $B(X,Y)=(AX,Y)$, $X,Y\in \mathfrak{m}$.
Then the eigenspaces of $A$ are $\Ad(H)$-invariant and
pairwise orthogonal both respect to $B$ and $(\cdot,\cdot)$.
Moreover, there is a {\rm(}possibly, non-unique{\rm)} $B$-orthogonal and $(\cdot,\cdot)$-orthogonal decomposition
$\mathfrak{m}=\oplus_{i=1}^s \mathfrak{p}_i$,
where $\mathfrak{p}_i$ are $\Ad(H)$-invariant and $\Ad(H)$-irreducible {\rm(}$\ad(\mathfrak{h})$-invariant and
$\ad(\mathfrak{h})$-irreducible{\rm)} submodules. Moreover,
$[\mathfrak{p}_i,\mathfrak{p}_j]\subset \mathfrak{m}$, $i\neq j$, for any such decomposition.
\end{lemma}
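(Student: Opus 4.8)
The plan is to establish the four assertions in turn, deriving everything from two facts already in hand: the $\Ad(G)$-invariance of the Killing form $B$, and the negative definiteness of $B|_{\mathfrak{h}}$ (the preceding unnamed lemma, which applies since $G/H$ is effective with $H$ the isotropy). First I would record that $\mathfrak{m}=\{X\in\mathfrak{g}\mid B(X,\mathfrak{h})=0\}$ is a genuine complement. Because $B$ is non-degenerate (indeed definite) on $\mathfrak{h}$, standard linear algebra gives $\mathfrak{g}=\mathfrak{h}\oplus\mathfrak{m}$: any $Y\in\mathfrak{g}$ differs from a unique element of $\mathfrak{h}$ by an element annihilating $\mathfrak{h}$ under $B$, and $\mathfrak{h}\cap\mathfrak{m}=0$ by definiteness. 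Note that semi-definiteness alone would not suffice here, which is why effectiveness is used. Since both $B$ and $\mathfrak{h}$ are $\Ad(H)$-invariant, so is the $B$-orthogonal complement $\mathfrak{m}$.

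Next I would introduce $A$. On the finite-dimensional space $\mathfrak{m}$ equipped with the positive-definite inner product $(\cdot,\cdot)$, the symmetric form $B|_{\mathfrak{m}}$ is represented by a unique operator $A$ via $B(X,Y)=(AX,Y)$, and $A$ is self-adjoint for $(\cdot,\cdot)$ because $B$ is symmetric. A short computation shows $A$ commutes with $\Ad(h)$ for every $h\in H$: using $\Ad(H)$-invariance of both $B$ and $(\cdot,\cdot)$ one gets $(A\,\Ad(h)X,Y)=B(\Ad(h)X,Y)=B(X,\Ad(h^{-1})Y)=(\Ad(h)AX,Y)$. Hence the eigenspaces of $A$ are $\Ad(H)$-invariant. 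Being self-adjoint for a positive-definite form, $A$ is diagonalizable with $(\cdot,\cdot)$-orthogonal eigenspaces, and these are also $B$-orthogonal since $B(X,Y)=\lambda(X,Y)=0$ whenever $X,Y$ lie in distinct eigenspaces.

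To obtain the irreducible refinement I would decompose each eigenspace $V_\lambda$ into $\Ad(H)$-irreducible submodules. This is possible because $\Ad(H)$ is compact, hence completely reducible, and the pieces may be taken $(\cdot,\cdot)$-orthogonal inside $V_\lambda$ (orthogonal complements of invariant subspaces are invariant). On a single eigenspace $B=\lambda(\cdot,\cdot)$, so a $(\cdot,\cdot)$-orthogonal splitting of $V_\lambda$ is automatically $B$-orthogonal; together with the orthogonality across eigenspaces this produces a simultaneously $B$- and $(\cdot,\cdot)$-orthogonal decomposition $\mathfrak{m}=\oplus_{i=1}^s\mathfrak{p}_i$ into $\Ad(H)$-irreducibles. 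In particular any two distinct $\mathfrak{p}_i,\mathfrak{p}_j$ are $B$-orthogonal (for a common eigenvalue $\lambda\neq0$ by proportionality, and for $\lambda=0$ because $B$ vanishes there).

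The main step, and the reason for choosing this particular $\mathfrak{m}$, is the bracket relation $[\mathfrak{p}_i,\mathfrak{p}_j]\subset\mathfrak{m}$ for $i\neq j$. Here the definition of $\mathfrak{m}$ as the $B$-complement of $\mathfrak{h}$ does the work: for $X\in\mathfrak{p}_i$, $Y\in\mathfrak{p}_j$, $Z\in\mathfrak{h}$, invariance of the Killing form gives $B([X,Y],Z)=B(Y,[Z,X])$, and since $\mathfrak{p}_i$ is $\ad(\mathfrak{h})$-invariant we have $[Z,X]\in\mathfrak{p}_i$, so $B(Y,[Z,X])=0$ by $B$-orthogonality of $\mathfrak{p}_j$ and $\mathfrak{p}_i$. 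Thus $B([X,Y],Z)=0$ for all $Z\in\mathfrak{h}$, i.e. $[X,Y]\in\mathfrak{m}$, and by linearity $[\mathfrak{p}_i,\mathfrak{p}_j]\subset\mathfrak{m}$. I expect no real obstacle in the routine Parts 1--3; the content is concentrated in this final identity, which nevertheless collapses at once once Killing-form invariance is combined with the cross-orthogonality of the modules.
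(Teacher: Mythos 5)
Your proof is correct and follows essentially the same route as the paper: definiteness of $B$ on $\mathfrak{h}$ (from effectiveness) for the complement, self-adjointness and equivariance of $A$ for the orthogonal eigenspace decomposition, complete reducibility plus $B=\lambda(\cdot,\cdot)$ on each eigenspace for the refinement, and Killing-form invariance combined with cross-orthogonality and $\ad(\mathfrak{h})$-invariance for $[\mathfrak{p}_i,\mathfrak{p}_j]\subset\mathfrak{m}$. The only cosmetic difference is that the paper proves the last identity by contradiction while you argue it directly.
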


\begin{proof}
Since the Killing form $B$ is negatively definite on $\mathfrak{h}$, the first assertion is obvious. Let us prove the second assertion.
Clear that the operator $A$ is $\Ad(H)$-equivariant and symmetric (with respect to $(\cdot,\cdot)$), in particular, the eigenspaces of $A$ are $\Ad(H)$-invariant.
Let $A_{\alpha}$ and $A_{\beta}$ be two eigenspaces of $A$ with the eigenvalues $\alpha\neq \beta$.
Consider any $X\in A_{\alpha}$ and $Y\in A_{\beta}$ and suppose that $\beta \neq 0$. Then we have
$$\beta(X,Y)=(X,AY)=B(X,Y)=(AX,Y)=\alpha(X,Y)$$
that implies
$(X,Y)=B(X,Y)=0$. Hence,  the eigenspaces of $A$
are pairwise orthogonal with respect to $B$ and $(\cdot,\cdot)$. Further, one can decompose every such eigenspace into
a direct $(\cdot,\cdot)$-orthogonal sum of $\Ad(H)$-invariant and $\Ad(H)$-irreducible (or even $\ad(\mathfrak{h})$-invariant and
$\ad(\mathfrak{h})$-irreducible) submodules.
Since $B$ is a multiple of $(\cdot,\cdot)$ on every eigenspace of $A$, we get the required decomposition.
Finally if $[\mathfrak{p}_i,\mathfrak{p}_j]\not\subset \mathfrak{m}$, then $B([X,Y],Z)\neq 0$ for some
$X\in \mathfrak{p}_i$, $Y\in \mathfrak{p}_j$, $Z\in \mathfrak{h}$. This implies $B(Y,[Z,X])\neq 0$ and $\mathfrak{p}_i$ is not
$\ad(\mathfrak{h})$-invariant, that is impossible.
\end{proof}

\begin{remark}\label{eigkill1}
Note that $A_0$ is an ideal of $\mathfrak{g}$. Indeed, $A_0=\{X\in \mathfrak{g}\,|\, B(X,\mathfrak{g})=0\}$,
the equality $B(A_0,\mathfrak{g})=0$ implies $B(A_0,[\mathfrak{g},\mathfrak{g}])=0$ and $B([\mathfrak{g}, A_0],\mathfrak{g})=0$.
Moreover, $\mathfrak{n(g)}\subset A_0 \subset \mathfrak{r(g)}$ according to (\ref{nilid}).
\end{remark}

It is naturally to consider the reductive complement (\ref{compkill1}) in order to study various problems related with homogeneous spaces.
However, sometimes this complement is not the most helpful.

\begin{example}
Let us consider Ledger~--~Obata spaces, i.~e. the spaces $G/H=F^m/\diag(F)$, $m\geq 2$, where $F$ is a connected compact simple Lie group,
$G=F^m=F\times F\times\cdots\times F$
($m$ factors), and $H=\diag(F)=\{(X,X,\dots,X)|X\in F\}$. Note that for $m=2$ we get irreducible symmetric spaces.

Let $\mathfrak{f}$ be the Lie algebra of $F$, then
$\mathfrak{g}:=m\mathfrak{f}$
and $\mathfrak{h}:=\diag(\mathfrak{f})$.
There are several $\Ad({H})$-invariant complements to
$\mathfrak{h}=\diag(\mathfrak{f})$ in $\mathfrak{g}=m \mathfrak{f}$. Indeed, it is easy to see that
every irreducible $\Ad({H})$-invariant module in $\mathfrak{g}$ has the form
$\left\{(\alpha_1 X, \alpha_2 X,\dots, \alpha_{m} X)\subset \mathfrak{g}\,|\, X \in \frak{f}\right\}$
for some fixed $\alpha_i \in \mathbb{R}$, see details e.~g. in \cite{CNN2017}. Every such modules are pairwise isomorphic with respect to $\Ad({H})$.
This allows to construct various $\Ad({H})$-invariant complements to
$\mathfrak{h}=\diag(\mathfrak{f})=\left\{(X,X,\dots,X)\,|\, X \in \mathfrak{f}\right\}$.

The most natural complement to $\mathfrak{h}$ in $\mathfrak{g}$ is $\mathfrak{m}$ defined as in (\ref{compkill1}).
It is easy to see that
$$
\mathfrak{m}=\left\{\operatorname{Lin}\bigl((\alpha_1 X, \alpha_2 X,\dots, \alpha_{m} X)\bigr)
\subset \mathfrak{g}\,|\, X \in \frak{f},\, \sum_{i=1}^m \alpha_i =0\right\}.
$$

Now, let $\mathfrak{g}_k$, $k=1,\dots,m$, be an ideal in $m \mathfrak{f}$ of the form
$\mathfrak{f}\oplus\cdots \oplus \mathfrak{f}\oplus 0 \oplus \mathfrak{f}\oplus\cdots \oplus \mathfrak{f}$, where $0$ is instead of $k$-th copy of
$\mathfrak{f}$ in $m \mathfrak{f}$.
We can choose any of $\mathfrak{g}_k$ as a  $\Ad(\diag(F))$-invariant complement to $\mathfrak{h}$ in $\mathfrak{g}=m \mathfrak{f}$.
Such choice proves also that the space of invariant Riemannian metric on $G/H=F^m/\diag(F)$ could be consider as the space of $\Ad(\diag(F))$-invariant
metrics on the Lie group $F^{m-1}$.
Note that the  complement $\mathfrak{g}_{m}$ was used instead of $\mathfrak{m}$ in \cite{CNN2017}, that helped to classify all Einstein invariant metrics
on the spaces $G/H=F^m/\diag(F)$ for $m\leq 4$.
\end{example}

\smallskip

Finally, we describe a possible choice of a reductive complement $\mathfrak{m}$ with using of maximal compactly embedded subalgebras in $\mathfrak{g}$.
\smallskip

Now, for a compact subgroup $H$ of the Lie group $G$ we can choose a maximal compactly embedded subgroup $K$ in $G$ such that $H \subset K$,
since all maximal compactly embedded subgroups of $G$ are conjugate. Now, we can choose a Levi subalgebra $\mathfrak{s}$ in
$\mathfrak{g}$, which is invariant under $\Ad_G(K)$. Then we get

\begin{lemma}[Lemma 14.3.3 in \cite{HilNeeb}]\label{goodlevi1}
For every maximal compactly embedded subalgebra $\mathfrak{k}$ of $\mathfrak{g}$,
there exists an $\Ad_G(K)$-invariant Levi decomposition $\mathfrak{g} = \mathfrak{r(g)}\rtimes \mathfrak{s}$ with the following properties:

{\rm 1)} $[\mathfrak{k}, \mathfrak{s}] \subset \mathfrak{s}$;

{\rm 2)} $[\mathfrak{k}\cap \mathfrak{r(g)}, \mathfrak{s}] = 0$;

{\rm 3)} $\mathfrak{k} = (\mathfrak{k}\cap \mathfrak{r(g)}) \oplus (\mathfrak{k}\cap \mathfrak{s})$;

{\rm 4)} $[\mathfrak{k},\mathfrak{k}] \subset \mathfrak{s}$;

{\rm 5)} $\mathfrak{k} \cap \mathfrak{s}$ is a maximal compact subalgebra in $\mathfrak{s}$.
\end{lemma}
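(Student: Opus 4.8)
The plan is to take, as the required Levi decomposition, exactly the one produced by the Mostow argument already invoked in the text: since $K$ is maximal compactly embedded, $\Ad_G(K)$ has compact closure in $\Aut(\mathfrak{g})$ and is fully reducible, so it leaves invariant some Levi factor $\mathfrak{s}$; together with the characteristic (hence $\Ad_G(K)$-invariant) radical $\mathfrak{r(g)}$ this yields the $\Ad_G(K)$-invariant decomposition $\mathfrak{g}=\mathfrak{r(g)}\rtimes\mathfrak{s}$. Property 1) is then immediate: $K$ is connected, so $\Ad_G(\exp tX)$ preserves $\mathfrak{s}$ for $X\in\mathfrak{k}$, and differentiating at $t=0$ gives $[\mathfrak{k},\mathfrak{s}]\subset\mathfrak{s}$. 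With 1) in hand I would run the construction introduced before Lemma \ref{h.1} verbatim with $\mathfrak{k}$ in place of $\mathfrak{h}$, obtaining Lie algebra homomorphisms $\varphi\colon\mathfrak{k}\to\mathfrak{r(g)}$ and $\psi\colon\mathfrak{k}\to\mathfrak{s}$ with $X=\varphi(X)+\psi(X)$, $\Ker(\psi)=\mathfrak{k}\cap\mathfrak{r(g)}$, $\Ker(\varphi)=\mathfrak{k}\cap\mathfrak{s}$, and the basic identity $[\varphi(X),\mathfrak{s}]=0$ for all $X\in\mathfrak{k}$.

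Properties 2) and 4) then drop out of this machinery. For 2), an element $Y\in\mathfrak{k}\cap\mathfrak{r(g)}$ equals $\varphi(Y)$, so $[Y,\mathfrak{s}]=[\varphi(Y),\mathfrak{s}]=0$. For 4), the subalgebra $\mathfrak{k}$ is compact, hence reductive, $\mathfrak{k}=\mathfrak{c(k)}\oplus[\mathfrak{k},\mathfrak{k}]$ with $[\mathfrak{k},\mathfrak{k}]$ semisimple; since $\mathfrak{r(g)}$ is solvable it contains no nonzero semisimple subalgebra, so $\varphi([\mathfrak{k},\mathfrak{k}])=0$, i.e. $[\mathfrak{k},\mathfrak{k}]\subset\Ker(\varphi)=\mathfrak{k}\cap\mathfrak{s}\subset\mathfrak{s}$.

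The heart of the argument, and the step I expect to be the main obstacle, is property 3); by 4) it reduces to showing $\mathfrak{c(k)}\subset(\mathfrak{k}\cap\mathfrak{r(g)})\oplus(\mathfrak{k}\cap\mathfrak{s})$, i.e. that for central $X\in\mathfrak{c(k)}$ both $\varphi(X)$ and $\psi(X)$ already lie in $\mathfrak{k}$, and here maximality of $\mathfrak{k}$ must be exploited. Since $X\in\mathfrak{k}$, the operator $\ad(X)$ is skew with respect to an $\ad(\mathfrak{k})$-invariant inner product, hence semisimple with purely imaginary spectrum; because $\mathfrak{s}$ is $\ad(X)$-invariant and $\ad(X)|_{\mathfrak{s}}=\ad_{\mathfrak{s}}(\psi(X))$ (using $[\varphi(X),\mathfrak{s}]=0$), the element $\psi(X)$ is elliptic in $\mathfrak{s}$, meaning $\ad_{\mathfrak{s}}(\psi(X))$ is semisimple with imaginary spectrum. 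The key technical point is the transfer of this ellipticity to all of $\mathfrak{g}$: an elliptic element of a semisimple Lie algebra acts semisimply with purely imaginary spectrum in every finite-dimensional real representation, and applying this to the representation $\ad_{\mathfrak{g}}|_{\mathfrak{s}}$ shows that $\ad_{\mathfrak{g}}(\psi(X))$ is itself elliptic, so $\mathbb{R}\,\psi(X)$ is compactly embedded in $\mathfrak{g}$. A short computation shows that $\psi(X)$ centralizes $\mathfrak{k}$: for $W\in\mathfrak{k}$, using $[X,\mathfrak{k}]=0$ one gets $[\psi(X),W]=-[\varphi(X),W]\in\mathfrak{r(g)}$, while using $[\varphi(W),\mathfrak{s}]=0$ one gets $[\psi(X),W]=[\psi(X),\psi(W)]\in\mathfrak{s}$, and $\mathfrak{r(g)}\cap\mathfrak{s}=0$ forces $[\psi(X),W]=0$. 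Thus $\overline{\exp(\mathbb{R}\ad(\psi(X)))}$ is a compact torus commuting with the compact group $\overline{\Ad_G(K)}$, their product is compact, and $\mathfrak{k}+\mathbb{R}\,\psi(X)$ is compactly embedded; maximality of $\mathfrak{k}$ forces $\psi(X)\in\mathfrak{k}$, whence $\varphi(X)=X-\psi(X)\in\mathfrak{k}$ as well, giving 3).

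Finally, property 5) follows from the same circle of ideas. By 3) and 4), $\mathfrak{k}\cap\mathfrak{s}$ is a compact subalgebra of $\mathfrak{s}$; to see it is maximal, suppose $\mathfrak{t}\subset\mathfrak{s}$ is a compact subalgebra containing it. Each element of $\mathfrak{t}$ is elliptic in $\mathfrak{s}$, hence (by the same transfer) elliptic in $\mathfrak{g}$, and $\mathfrak{t}$ is compactly embedded in $\mathfrak{g}$. Since $\mathfrak{k}\cap\mathfrak{r(g)}$ centralizes $\mathfrak{s}\supset\mathfrak{t}$ by 2), the sum $(\mathfrak{k}\cap\mathfrak{r(g)})\oplus\mathfrak{t}$ is a subalgebra, it is compactly embedded as a commuting product of the compact groups attached to $\mathfrak{k}\cap\mathfrak{r(g)}$ and to $\mathfrak{t}$, and by 3) it contains $\mathfrak{k}$; maximality of $\mathfrak{k}$ yields $(\mathfrak{k}\cap\mathfrak{r(g)})\oplus\mathfrak{t}=\mathfrak{k}$, so $\mathfrak{t}\subset\mathfrak{k}\cap\mathfrak{s}$ and therefore $\mathfrak{t}=\mathfrak{k}\cap\mathfrak{s}$. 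I expect the only genuinely delicate ingredient throughout to be the ellipticity transfer used in 3) and 5); everything else is the formal bookkeeping with $\varphi$ and $\psi$ and the elementary fact that a commuting product of compactly embedded subalgebras is compactly embedded.
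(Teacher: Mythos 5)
The paper offers no proof of this lemma at all: it is imported verbatim as Lemma 14.3.3 of \cite{HilNeeb}, so your argument is necessarily a different route, namely a self-contained proof. As such it is essentially correct, and it efficiently reuses two ingredients already present in the paper: Mostow's theorem on fully reducible automorphism groups (used in Section 2 to produce an $\Ad_G(H)$-invariant Levi factor), which here gives the $\Ad_G(K)$-invariant decomposition and property 1), and the $\varphi$/$\psi$ homomorphism formalism of Section 2, which yields 2) and 4) exactly as you say. The genuinely new content is your treatment of 3) and 5), and it rests on the two facts you correctly isolate as the delicate points: (a) an element of a semisimple Lie algebra whose adjoint action is semisimple with purely imaginary spectrum acts semisimply with purely imaginary spectrum in \emph{every} finite-dimensional real representation (Jordan decompositions are preserved by representations, and weights of $\mathfrak{s}$ are rational combinations of roots, so ellipticity of the root values forces ellipticity of the weight values); and (b) a Lie subalgebra of $\mathfrak{gl}(V)$ all of whose elements are semisimple with purely imaginary spectrum generates a relatively compact subgroup. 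Granting these, your proof of 3) is complete: $\psi(X)$ centralizes $\mathfrak{k}$ by the $\mathfrak{r(g)}\cap\mathfrak{s}=0$ argument, $\mathbb{R}\,\psi(X)$ is compactly embedded by the transfer, the two commuting compact closures in $\Aut(\mathfrak{g})$ have compact product, and maximality of $\mathfrak{k}$ finishes. What this buys, compared with the paper's citation, is a proof visibly built from the paper's own Section 2 machinery; what it costs is the need to import (or prove) the two representation-theoretic facts above, which is exactly the kind of material the paper avoids by quoting \cite{HilNeeb}.

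One caveat concerns 5). The paper's terminology carefully distinguishes a \emph{compact} subalgebra (compactly embedded in itself) from a subalgebra \emph{compactly embedded in} $\mathfrak{s}$, and your argument needs the competitor $\mathfrak{t}\supset\mathfrak{k}\cap\mathfrak{s}$ to be compactly embedded in $\mathfrak{s}$: only then is every element of $\mathfrak{t}$ elliptic in $\mathfrak{s}$. For a merely compact subalgebra this first step fails; for instance, $\Lin\{H\}$ for a hyperbolic element $H\in \mathfrak{sl}(2,\mathbb{R})$ is abelian, hence compact, but $H$ is not elliptic, so the chain ``compact, hence elementwise elliptic in $\mathfrak{s}$, hence compactly embedded in $\mathfrak{g}$'' breaks at its first link under the literal reading of the statement. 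Since Hilgert--Neeb's Lemma 14.3.3 asserts maximality among \emph{compactly embedded} subalgebras of $\mathfrak{s}$ (the paper's wording ``maximal compact subalgebra'' is loose here), your proof does establish the intended statement; you should just say explicitly that this is the notion of maximality you prove.
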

It is easy to see also that $\mathfrak{k} \cap \mathfrak{r(g)}$ is a maximal compactly embedded subalgebra in $\mathfrak{r(g)}$, since all
maximal compactly embedded subalgebras of $\mathfrak{g}$ are conjugated in $\operatorname{Aut}(\mathfrak{g})$ and $\mathfrak{r(g)}$ is a characteristic ideal in
$\mathfrak{g}$. See also Theorem 2 in \cite{Wust} for Lie groups.
Now, in order to choose an $\Ad(H)$-invariant complement $\mathfrak{m}$ to $\mathfrak{h}$ in $\mathfrak{g}$ one need to choose some
$\Ad(H)$-invariant complement $\mathfrak{m}_1$ to $\mathfrak{h}$ in $\mathfrak{k}$, some
$\Ad(K)$-invariant complement $\mathfrak{m}_2$ to $\mathfrak{k} \cap \mathfrak{s}$ in $\mathfrak{s}$, and
some $\Ad(K)$-invariant complement $\mathfrak{m}_3$ to $\mathfrak{k}\cap \mathfrak{r(g)}$ in $\mathfrak{r(g)}$, and put
$\mathfrak{m}:=\mathfrak{m}_1\oplus \mathfrak{m}_2\oplus\mathfrak{m}_3$.

\section{On totally geodesic submanifolds}

Let us consider an effective homogeneous Riemannian space $(M=G/H, \rho)$ with $G$-invariant metric $\rho$ and some $\Ad(H)$-invariant decomposition
$\mathfrak{g}=\mathfrak{h}\oplus \mathfrak{m}$, as it is explained in the previous section.
We identify  elements $X,Y \in  \mathfrak{g}$ with Killing vector fields on $M=G/H$, the
$\mathfrak{m}$ with the tangent space at the point $o = eH$, and the metric $\rho$ with an inner product $(\cdot,\cdot)$ on $\mathfrak{m}$ as usual.
Then the covariant derivative $\nabla_XY$, $X,Y \in  \mathfrak{m}$, at the point $o = eH$ is given by

\begin{equation}\label{connec}
\nabla_XY(o)=- \frac{1}{2}\, [X,Y]_{\mathfrak{m}}+U(X,Y),
\end{equation}
where the bilinear symmetric map $U:\mathfrak{m}\times \mathfrak{m}
\rightarrow \mathfrak{m}$ is given by
\begin{equation}\label{connec1}
2(U(X,Y),Z)=( [Z,X]_{\mathfrak{m}},Y)+(X,[Z,Y]_{\mathfrak{m}})
\end{equation}
for any $X,Y, Z\in \mathfrak{m}$, where $V_{\mathfrak{m}}$ is the
$\mathfrak{m}$-part of a vector $V \in \mathfrak{g} $ \cite[Proposition 7.28]{Bes}.

We have the following important property of totally geodesic submanifolds of a geodesic orbit Riemannian manifold.

\begin{prop}[\cite{BerNik}, Theorem 11]\label{tot}
Every closed totally geodesic submanifold of a  Riemannian manifold
with  homogeneous geodesics is a manifold with  homogeneous
geodesics.
\end{prop}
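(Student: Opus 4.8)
The plan is to prove that a closed totally geodesic submanifold $N$ of a GO-manifold $(M,g)$ is itself a GO-manifold by producing, for each geodesic $\gamma$ of $N$, a 1-parameter group of isometries of $N$ whose orbit is $\gamma$. The starting observation is that since $N$ is totally geodesic in $M$, every geodesic $\gamma$ of $N$ is also a geodesic of the ambient $M$. Because $(M,g)$ is geodesic orbit, there is a Killing vector field $X$ on $M$ (the generator of a 1-parameter subgroup of $\operatorname{Isom}(M)$) whose integral orbit through a point $p\in\gamma$ coincides with $\gamma$. The core of the argument is then to transfer this ambient Killing field to an intrinsic isometry of $N$.

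First I would fix a point $p\in N$ lying on $\gamma$ and choose the ambient Killing field $X$ with $X(p)=\dot\gamma(p)$ tangent to $N$ and orbit equal to $\gamma$. The key geometric fact to establish is that $X$ is tangent to $N$ all along $\gamma$: this is immediate because the orbit of $X$ through $p$ \emph{is} $\gamma\subset N$, so $X(\gamma(t))=\dot\gamma(t)\in T_{\gamma(t)}N$. The natural route to an intrinsic isometry is to show that the Killing field $X$, when restricted to $N$, behaves like a Killing field \emph{of} $N$ along $\gamma$, so that its flow preserves $\gamma$ and acts by isometries of $N$ on a neighborhood. Concretely, I would use the fact that the flow $\varphi_t$ of $X$ consists of ambient isometries, together with the observation that $\varphi_t(\gamma(s))=\gamma(s+t)$, so $\varphi_t$ maps $\gamma$ into itself and hence maps points of $N$ on $\gamma$ back into $N$.

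The main obstacle, and the technical heart of the proof, is that the ambient flow $\varphi_t$ need not preserve the whole submanifold $N$ — only the curve $\gamma$. To conclude that $\gamma$ is an orbit of a 1-parameter isometry group \emph{of} $N$, one must show that the restriction of the flow's infinitesimal action to $N$ gives a genuine Killing field on $N$ whose orbit is $\gamma$. The cleanest way I would handle this is to decompose $X$ along $\gamma$ into its component tangent to $N$ and its normal component, and argue (using that $\gamma$ is a common geodesic and that $N$ is totally geodesic, so the second fundamental form vanishes) that the tangential part $X^\top$ is itself a Killing field of $(N,g|_N)$ in a neighborhood of $\gamma$. This uses the standard fact that restriction of an ambient Killing field to a totally geodesic submanifold yields a Killing field of the submanifold, which follows because the totally geodesic condition lets one identify the intrinsic and ambient covariant derivatives of tangential vector fields; then $(\nabla^N_Y X^\top, W) + (Y, \nabla^N_W X^\top)$ matches the ambient Killing equation restricted to $T N$.

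Finally, having produced a Killing field of $N$ whose value at $p$ is $\dot\gamma(p)$ and whose orbit is the homogeneous geodesic $\gamma$, I would invoke the intrinsic characterization: the orbit of a Killing field through $p$ is a geodesic exactly when $\gamma$ is an orbit of a 1-parameter isometry subgroup, which is precisely the GO-property for $N$. Since the argument applies to an arbitrary geodesic $\gamma$ of $N$ through an arbitrary point, every geodesic of $N$ is such an orbit, so $(N, g|_N)$ is a GO-manifold. I would need to note only that closedness of $N$ guarantees that $g|_N$ is complete and that the relevant isometries of $N$ are defined globally (by extending the local Killing field along the complete geodesic), which is where the hypothesis that $N$ is \emph{closed} enters.
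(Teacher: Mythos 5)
Your proof is correct and follows essentially the same route as the source the paper cites for this statement (the paper itself gives no proof, importing it from \cite{BerNik}): take the ambient Killing field $X$ whose orbit is $\gamma$, use total geodesy of $N$ (vanishing second fundamental form) to conclude that its tangential projection $X^{\top}$ is a Killing field of $N$ which equals $\dot\gamma$ along $\gamma$, so that $\gamma$ is an integral curve of $X^{\top}$, and use closedness of $N$ to obtain a one-parameter isometry group of $N$ with orbit $\gamma$. One cosmetic remark: $X^{\top}$ is globally defined on all of $N$ from the outset (no extension along $\gamma$ is needed); what closedness of $N$ actually buys is completeness of $(N,g|_N)$, which makes the flow of the Killing field $X^{\top}$ complete and hence a genuine one-parameter subgroup of the full isometry group of $N$.
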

\medskip

Note that  a totally geodesic submanifold $M'$ of a geodesic orbit Riemannian space $M=G/H$ could have
the isometry group that is not a subgroup of $G$.

\begin{example}\label{spheretotge}  The round sphere $S^{2n-1}$
can be realized as the homogeneous Riemannian space $U(2n)/U(2n-1)$, but any its totally geodesic submanifold $S^{2n-2}$
admits only one connected transitive isometry group $SO(2n-1)$, that is not a subgroup of $U(2n)$.
\end{example}

It is possible to produce various generalizations of this example. This effects explains some troubles with the study of totally geodesic submanifolds
of a given geodesic orbit Riemannian space.

For a subspace $\mathfrak{p} \subset \mathfrak{m}$ and $U \in \mathfrak{m}$ we denote
by $U_{\mathfrak{p}}$ the $(\cdot,\cdot)$-orthogonal projection of $U$ to $\mathfrak{p}$.
The restriction of the adjoint
endomorphism $\ad (X)$, $X \in \mathfrak{g}$, to a subspace $\mathfrak{p} \subset
\mathfrak{m}$  will be  denoted by
$\ad_X^{\mathfrak{p}}$, i.~e. $\ad_X^{\mathfrak{p}}(Y)=([X,Y]_{\mathfrak{m}})_{\mathfrak{p}}$, $Y\in\mathfrak{p}$.
The notation $\mathfrak{p}^\perp$ means the
$(\cdot,\cdot)$-orthogonal complement to $\mathfrak{p}\subset \mathfrak{m}$ in $\mathfrak{m}$.

\begin{definition} A subspace $\mathfrak{p} \subset \mathfrak{m}$
is called {\it totally geodesic} if it is the tangent space  at $o$
of a totally geodesic orbit $Ko \subset G/H =M$ of a subgroup $K
\subset G$.
\end{definition}

Note that the property to be a totally geodesic subspace depends on a reductive decomposition (\ref{reductivedecomposition}).
From (\ref{connec}) and (\ref{connec1}) we easily get the following  result.

\begin{prop}[\cite{AN}]\label{totallygeodesicProp} Let $(M =G/H,\rho)$ be a homogeneous Riemannian space with the reductive decomposition
{\rm (\ref{reductivedecomposition})}.
Then a subspace $\mathfrak{p} \subset \mathfrak{m}$
is totally geodesic if  and only if the following two conditions hold:

{\rm 1)} $\mathfrak{p}$ generates a  subalgebra  of the form $\mathfrak{k} =
\mathfrak{h}' \oplus \mathfrak{p}$, where $\mathfrak{h}'$ is a
subalgebra of $\mathfrak{h}$ {\rm(}this is equivalent to $[\mathfrak{p},\mathfrak{p}] \subset \mathfrak{h} \oplus \mathfrak{p}$,
and $\mathfrak{h}'$ could be chosen as
minimal subalgebra of $\mathfrak{h}$, including $[\mathfrak{p},\mathfrak{p}]_{\mathfrak{h}}${\rm)};

{\rm 2)} the endomorphism $\ad_Z^{\mathfrak{p}} \in \End(\mathfrak{p})$
for  $Z \in \mathfrak{p}^{\perp}$ is $(\cdot,\cdot)$-skew-symmetric or,
equivalently,
$U(\mathfrak{p}, \mathfrak{p}) \subset \mathfrak{p}$.
\end{prop}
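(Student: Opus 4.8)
The plan is to work directly from the two structural formulas for a homogeneous Riemannian space: the Koszul-type expression \eqref{connec} for the covariant derivative and the definition \eqref{connec1} of the map $U$. The object to characterize is a subspace $\mathfrak{p}\subset\mathfrak{m}$ that is tangent at $o$ to a totally geodesic orbit $Ko$ of some subgroup $K\subset G$. A totally geodesic orbit $Ko$ means two things simultaneously: first, that $\mathfrak{p}$ is genuinely the tangent space of an orbit of a subgroup, which forces an algebraic closure condition on $\mathfrak{p}$; and second, that this orbit submanifold is totally geodesic, which is a second-fundamental-form condition and should translate into condition 2). I would establish the two conditions as the separate meanings of these two requirements.

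First I would treat condition 1). If $Ko$ is an orbit with tangent space $\mathfrak{p}$ at $o$, then $\mathfrak{k}=\operatorname{Lie}(K)$ must satisfy $\mathfrak{k}\cap\mathfrak{m}=\mathfrak{p}$ together with $\mathfrak{k}=(\mathfrak{k}\cap\mathfrak{h})\oplus\mathfrak{p}$; writing $\mathfrak{h}'=\mathfrak{k}\cap\mathfrak{h}$ gives $\mathfrak{k}=\mathfrak{h}'\oplus\mathfrak{p}$, and the subalgebra condition $[\mathfrak{k},\mathfrak{k}]\subset\mathfrak{k}$ specializes to $[\mathfrak{p},\mathfrak{p}]\subset\mathfrak{h}\oplus\mathfrak{p}$. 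Conversely, if $[\mathfrak{p},\mathfrak{p}]\subset\mathfrak{h}\oplus\mathfrak{p}$, I would take $\mathfrak{h}'$ to be the subalgebra of $\mathfrak{h}$ generated by $[\mathfrak{p},\mathfrak{p}]_{\mathfrak{h}}$ and check that $\mathfrak{h}'\oplus\mathfrak{p}$ closes under the bracket, so that $K=\exp(\mathfrak{k})$ has $Ko$ as an orbit tangent to $\mathfrak{p}$. The equivalence of the bracket formulation with the existence of $\mathfrak{h}'$ is essentially bookkeeping on the $\mathfrak{h}$- and $\mathfrak{m}$-components.

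Next I would handle condition 2), which is where the geometry enters. Granting 1), the orbit $N:=Ko$ is an immersed submanifold through $o$ with $T_oN=\mathfrak{p}$, and it is homogeneous under $K$, so it suffices to test the second fundamental form at the single point $o$. The submanifold is totally geodesic if and only if $\nabla_XY(o)$ lies in $T_oN=\mathfrak{p}$ for all $X,Y\in\mathfrak{p}$. Substituting \eqref{connec}, $\nabla_XY(o)=-\tfrac12[X,Y]_{\mathfrak{m}}+U(X,Y)$; by condition 1) the term $[X,Y]_{\mathfrak{m}}$ already lies in $\mathfrak{p}$, so the tangency of $\nabla_XY(o)$ to $\mathfrak{p}$ is equivalent to $U(X,Y)\in\mathfrak{p}$, i.e. $U(\mathfrak{p},\mathfrak{p})\subset\mathfrak{p}$. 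To get the reformulation in terms of $\ad_Z^{\mathfrak{p}}$, I would pair $U(X,Y)$ against an arbitrary $Z\in\mathfrak{p}^\perp$ using \eqref{connec1}: the condition $U(\mathfrak{p},\mathfrak{p})\subset\mathfrak{p}$ is exactly $(U(X,Y),Z)=0$ for all $X,Y\in\mathfrak{p}$ and $Z\in\mathfrak{p}^\perp$, and \eqref{connec1} rewrites $2(U(X,Y),Z)=([Z,X]_{\mathfrak{m}},Y)+(X,[Z,Y]_{\mathfrak{m}})$, which is precisely the statement that the endomorphism $Y\mapsto([Z,Y]_{\mathfrak{m}})_{\mathfrak{p}}=\ad_Z^{\mathfrak{p}}(Y)$ is $(\cdot,\cdot)$-skew-symmetric on $\mathfrak{p}$.

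The main obstacle I anticipate is not any single computation but the passage from the algebraic identity "$\nabla_XY(o)\in\mathfrak{p}$ for $X,Y\in\mathfrak{p}$" to the genuinely geometric statement "$N$ is totally geodesic." This requires knowing that vanishing of the second fundamental form at the one point $o$ propagates over all of $N$, which is where homogeneity of $N$ under $K$ is essential, and it requires that the Killing fields in $\mathfrak{p}$ restrict on $N$ to the tangent fields computing the intrinsic connection of $N$. I would justify these by invoking that $N=Ko$ is $K$-homogeneous, so the second fundamental form is $K$-invariant and hence determined by its value at $o$, and that the ambient covariant derivative of two fields tangent to $N$ projects to the induced connection on $N$. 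Once this reduction to the point $o$ is in place, the rest is the direct substitution into \eqref{connec} and \eqref{connec1} described above.
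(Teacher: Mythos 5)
You are not being compared against a detailed argument: the paper gives no proof of Proposition \ref{totallygeodesicProp}, citing \cite{AN} and remarking only that it follows from (\ref{connec}) and (\ref{connec1}). Your ``if'' direction is the intended filling-in and is essentially correct: once $\mathfrak{k}=\mathfrak{h}'\oplus\mathfrak{p}$ is an honest subalgebra, the orbit of the corresponding connected group $K$ has tangent space $\mathfrak{p}$ at $o$, the Killing fields coming from $\mathfrak{p}\subset\mathfrak{k}$ are tangent to that orbit, (\ref{connec}) reduces vanishing of the second fundamental form at $o$ to $U(\mathfrak{p},\mathfrak{p})\subset\mathfrak{p}$, (\ref{connec1}) identifies this with skew-symmetry of $\ad_Z^{\mathfrak{p}}$ for $Z\in\mathfrak{p}^{\perp}$, and $K$-homogeneity propagates the vanishing along the orbit.

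The genuine gap is the first step of your ``only if'' direction, where you assert that an orbit $Ko$ with $T_o(Ko)=\mathfrak{p}$ \emph{must} satisfy $\mathfrak{k}\cap\mathfrak{m}=\mathfrak{p}$ and $\mathfrak{k}=(\mathfrak{k}\cap\mathfrak{h})\oplus\mathfrak{p}$. This is not a deduction: $T_o(Ko)$ is the projection of $\mathfrak{k}$ to $\mathfrak{m}$ along $\mathfrak{h}$, and nothing forces $\mathfrak{k}$ to be adapted to the decomposition $\mathfrak{g}=\mathfrak{h}\oplus\mathfrak{m}$. Concretely, take $M=G/H$ with $G=H_3\rtimes U(1)$ ($H_3$ the Heisenberg group), $H=U(1)$, $\mathfrak{h}=\mathbb{R}J$ where $Je_1=e_2$, $Je_2=-e_1$, $Je_3=0$, and $\mathfrak{m}=\Lin\{e_1,e_2,e_3\}$ with $[e_1,e_2]=e_3$ and $e_i$ orthonormal. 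For $X=e_1+e_3$ one checks $([X+J,Y]_{\mathfrak{m}},X)=0$ for all $Y\in\mathfrak{m}$, so by Lemma \ref{GO-criterion} (and the remark following it) the orbit of the one-parameter group $K$ generated by $W=e_1+e_3+J$ is a geodesic through $o$, hence a totally geodesic orbit with tangent space $\mathfrak{p}=\mathbb{R}X$; but $\mathfrak{k}=\mathbb{R}W$ meets neither $\mathfrak{h}$ nor $\mathfrak{m}$. Moreover, condition 2) fails for this $\mathfrak{p}$, since $([e_2,X]_{\mathfrak{m}},X)=(-e_3,e_1+e_3)=-1\neq 0$ while $e_2\in\mathfrak{p}^{\perp}$. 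So under the literal reading of the Definition, the ``only if'' implication is not merely unproved by your argument --- it is false; the proposition is correct only when the subgroup $K$ realizing $\mathfrak{p}$ is required to be adapted, $\mathfrak{k}=(\mathfrak{k}\cap\mathfrak{h})\oplus(\mathfrak{k}\cap\mathfrak{m})$. Your proof silently builds this reading in as if it were a consequence; it has to be stated as the interpretation of the definition (as \cite{AN} intends), not derived.

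A second, smaller defect: you dismiss the equivalence inside condition 1) as ``bookkeeping''. Only one direction is. For the converse, closure of $\mathfrak{h}'\oplus\mathfrak{p}$ requires $[[\mathfrak{p},\mathfrak{p}]_{\mathfrak{h}},\mathfrak{p}]\subset\mathfrak{p}$, which does not follow formally from $[\mathfrak{p},\mathfrak{p}]\subset\mathfrak{h}\oplus\mathfrak{p}$: in any symmetric space every subspace $\mathfrak{p}\subset\mathfrak{m}$ satisfies $[\mathfrak{p},\mathfrak{p}]\subset\mathfrak{h}$ (and condition 2) holds trivially, since $U\equiv 0$), yet only Lie triple systems are totally geodesic --- a generic $2$-plane in the tangent space of $\mathbb{C}P^{2}$ is a counterexample. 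So the check you propose in the ``conversely'' step cannot succeed in general, and condition 1) must be read as the closure statement itself rather than as the bare bracket inclusion.
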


The following result is well known.

\begin{lemma} \label{skew-symmetryLemma}
Let $(M =G/H,\rho)$ be a geodesic orbit Riemannian space with the reductive decomposition
{\rm (\ref{reductivedecomposition})} and $\mathfrak{m} = \mathfrak{p}\oplus \mathfrak{q}$
is a $(\cdot,\cdot)$-orthogonal decomposition. Then
$$
U(\mathfrak{p},\mathfrak{p} ) \subset \mathfrak{p},\quad
U(\mathfrak{q}, \mathfrak{q}) \subset \mathfrak{q},
$$
and the endomorphisms
$\ad_{\mathfrak{p}}^{\mathfrak{q}},\,\,\ad_{\mathfrak{q}}^{\mathfrak{p}}
$ are skew-symmetric with respect to $(\cdot,\cdot)$.
\end{lemma}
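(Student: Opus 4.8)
The plan is to reduce the whole statement to the single inclusion $U(\mathfrak p,\mathfrak p)\subset\mathfrak p$ together with its twin $U(\mathfrak q,\mathfrak q)\subset\mathfrak q$, since the two skew-symmetry assertions are merely reformulations of these. Indeed, for $X,Y\in\mathfrak p$ and $W\in\mathfrak q$, formula (\ref{connec1}) gives
$$
2(U(X,Y),W)=([W,X]_{\mathfrak m},Y)+(X,[W,Y]_{\mathfrak m})=(\ad_W^{\mathfrak p}X,Y)+(X,\ad_W^{\mathfrak p}Y),
$$
where the second equality uses $X,Y\in\mathfrak p$ to replace $[W,\cdot]_{\mathfrak m}$ by its $\mathfrak p$-projection. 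Hence $U(X,Y)\in\mathfrak p$ for all $X,Y\in\mathfrak p$ is exactly the skew-symmetry of every $\ad_W^{\mathfrak p}$, $W\in\mathfrak q$, i.e. of $\ad_{\mathfrak q}^{\mathfrak p}$; interchanging the roles of $\mathfrak p$ and $\mathfrak q$ turns $U(\mathfrak q,\mathfrak q)\subset\mathfrak q$ into the skew-symmetry of $\ad_{\mathfrak p}^{\mathfrak q}$. Throughout I read $\mathfrak p$ and $\mathfrak q$ as $\Ad(H)$-invariant summands, which is the setting in which the claim holds and is plainly the one intended here.

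The heart of the matter is the identity $U(X,X)=[X,Z]$ for a suitable $Z=Z(X)\in\mathfrak h$, and I would extract it from the GO-property (Lemma \ref{GO-criterion}): given $X\in\mathfrak m$, choose $Z\in\mathfrak h$ with $([X+Z,Y]_{\mathfrak m},X)=0$ for all $Y\in\mathfrak m$. Since $\mathfrak m$ is $\Ad(H)$-invariant we have $[Z,Y]\in\mathfrak m$, and the $\Ad(H)$-invariance of $(\cdot,\cdot)$ makes $\ad(Z)$ skew-symmetric on $\mathfrak m$; therefore $([X,Y]_{\mathfrak m},X)=-([Z,Y],X)=(Y,[Z,X])$. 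On the other hand, setting both arguments of $U$ equal to $X$ in (\ref{connec1}) yields $(U(X,X),Y)=([Y,X]_{\mathfrak m},X)=-([X,Y]_{\mathfrak m},X)$ for every $Y\in\mathfrak m$. Combining the two computations gives $(U(X,X),Y)=-(Y,[Z,X])$ for all $Y\in\mathfrak m$, whence $U(X,X)=-[Z,X]=[X,Z]$.

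With this identity the proof will close quickly: for $X\in\mathfrak p$ the $\ad(\mathfrak h)$-invariance of $\mathfrak p$ forces $[X,Z]\in\mathfrak p$, so $U(X,X)\in\mathfrak p$, and polarizing the symmetric bilinear map $U$ gives $U(\mathfrak p,\mathfrak p)\subset\mathfrak p$; the identical argument applied to $\mathfrak q$ gives $U(\mathfrak q,\mathfrak q)\subset\mathfrak q$, and feeding these back into the first paragraph produces the two skew-symmetry statements. The one place that genuinely requires care — and the main obstacle — is the invariance hypothesis: were $\mathfrak p,\mathfrak q$ allowed to be arbitrary orthogonal summands the conclusion would fail, already for a line $\mathfrak p=\mathbb R X$ with $U(X,X)\neq0$, because the skew-symmetry of $\ad(Z)$ forces $U(X,X)=[X,Z]\perp X$ and hence $U(X,X)\notin\mathfrak p$. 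Thus the whole argument hinges on the step $[X,Z]\in\mathfrak p$, which is available precisely because $\mathfrak p$ is an $\ad(\mathfrak h)$-submodule.
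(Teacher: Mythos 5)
Your proof is correct, and it rests on exactly the same ingredients as the paper's: Lemma \ref{GO-criterion}, formula (\ref{connec1}), and the $\Ad(H)$-invariance of the summands; only the packaging differs. The paper's proof is a single compressed computation: for $X\in\mathfrak{p}$, $Y\in\mathfrak{q}$ and $Z$ the geodesic vector of $Y$, it writes $0=([Y+Z,X]_{\mathfrak{m}},Y)=-(\ad(X)Y,Y)=-(U(Y,Y),X)$, where invariance is used to kill the term $([Z,X],Y)$ (since $[Z,X]\in\mathfrak{p}$ is orthogonal to $Y$); this yields the skew-symmetry of $\ad_X^{\mathfrak{q}}$ and $U(\mathfrak{q},\mathfrak{q})\subset\mathfrak{q}$ simultaneously, and then the roles of $\mathfrak{p}$ and $\mathfrak{q}$ are exchanged. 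You instead isolate the identity $U(X,X)=[X,Z]$, valid for every $X\in\mathfrak{m}$, invoke invariance afterwards to place $[X,Z]$ in $\mathfrak{p}$, and polarize; the skew-symmetry assertions then follow from your preliminary equivalence. These are the same computation with invariance invoked at a different moment, though your version has the small advantage of extracting a reusable identity. One point where your write-up is more careful than the paper's: the lemma as stated assumes only a $(\cdot,\cdot)$-orthogonal decomposition, and the paper's proof opens by declaring it "clear" that such a decomposition is $\Ad(H)$-invariant, which is not automatic (any irreducible $\Ad(H)$-module of dimension at least $2$ can be split orthogonally into non-invariant pieces). Your closing observation --- the line $\mathfrak{p}=\mathbb{R}X$ with $U(X,X)\neq 0$, together with $U(X,X)=[X,Z]\perp X$ --- shows the invariance hypothesis cannot be dispensed with, so your reading of the statement is indeed the intended one.
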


\begin{proof} Clear, that the decomposition $\mathfrak{m} = \mathfrak{p}\oplus \mathfrak{q}$ is $\Ad(H)$-invariant.
For $X \in \mathfrak{p}$, $Y \in \mathfrak{q} $ we
have
$$
0= \left([Y + Z, X]_{\mathfrak{m}},Y \right) = - \left(\ad (X) Y,Y
\right) =- \left( U(Y,Y), X \right) ,
$$
where $Z$ is as in Lemma \ref{GO-criterion}. This shows that
$\ad_{X}^{\mathfrak{q}}$ is skew-symmetric  and
$U(\mathfrak{q},\mathfrak{q} ) \subset \mathfrak{q}$.
\end{proof}
\smallskip

Now, from Lemma \ref{skew-symmetryLemma} we get

\begin{prop}\label{totgeodet}
Let $(M=G/H, \rho)$ be a geodesic orbit Riemannian space with the reductive decomposition~{\rm(\ref{reductivedecomposition})}, then the following assertions hold:

{\rm 1)} If  a subspace $\mathfrak{p} \subset \mathfrak{m}$
generates a Lie subalgebra $\mathfrak{k} =
\mathfrak{h}' \oplus \mathfrak{p}$ of $\mathfrak{g}$, where $\mathfrak{h}'=\mathfrak{k} \cap \mathfrak{h}$ is a
subalgebra of $\mathfrak{h}$ and $K \subset G$ is the corresponding subgroup,
then the homogeneous space $K/K\cap H$ with the induced Riemannian metric
is a totally geodesic submanifold of $(M=G/H, \rho)$.

{\rm 2)} If $\mathfrak{h}'+C_{\mathfrak{h}}(\mathfrak{p})=\mathfrak{h}$,
where $C_{\mathfrak{h}}(\mathfrak{p})$ is the centralizer of $\mathfrak{p}$ in $\mathfrak{h}$,
then $K/K\cap H$ with the induced Riemannian metric is a geodesic orbit Riemannian space itself.
In particular, any  connected subgroup $K
\subset G$ which contains $H$ has the totally geodesic orbit $P = Ko = K/H$ which is a geodesic orbit Riemannian space
{\rm(}with respect to the induced metric{\rm)}.
\end{prop}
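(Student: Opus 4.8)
The plan is to prove each assertion by combining the totally geodesic criterion of Proposition~\ref{totallygeodesicProp} with the skew-symmetry properties established in Lemma~\ref{skew-symmetryLemma}. For assertion~1), I would first observe that the hypothesis gives exactly condition~1) of Proposition~\ref{totallygeodesicProp}, namely $[\mathfrak{p},\mathfrak{p}]\subset \mathfrak{h}\oplus \mathfrak{p}$, so it remains only to verify condition~2): that $U(\mathfrak{p},\mathfrak{p})\subset \mathfrak{p}$. This is where Lemma~\ref{skew-symmetryLemma} does the work directly. Writing the orthogonal decomposition $\mathfrak{m}=\mathfrak{p}\oplus \mathfrak{q}$ with $\mathfrak{q}=\mathfrak{p}^{\perp}$, the lemma asserts $U(\mathfrak{p},\mathfrak{p})\subset \mathfrak{p}$ for \emph{any} such orthogonal splitting of a GO-space, with no further hypotheses. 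Hence condition~2) is automatic, both conditions of Proposition~\ref{totallygeodesicProp} hold, and $\mathfrak{p}$ is totally geodesic; the orbit $Ko=K/(K\cap H)$ is then a totally geodesic submanifold. I should note that $\mathfrak{h}'=\mathfrak{k}\cap\mathfrak{h}$ is forced, since $\mathfrak{k}=\mathfrak{h}'\oplus\mathfrak{p}$ and $\mathfrak{p}\subset\mathfrak{m}$ give $\mathfrak{h}'=\mathfrak{k}\cap\mathfrak{h}$.

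For assertion~2), the goal is to show $(K/K\cap H,\rho|_K)$ is itself a GO-space, for which I would apply the GO-criterion (Lemma~\ref{GO-criterion}) to the reductive decomposition $\mathfrak{k}=\mathfrak{h}'\oplus \mathfrak{p}$. Given any $X\in\mathfrak{p}$, I must produce $Z'\in\mathfrak{h}'$ with $([X+Z',Y]_{\mathfrak{p}},X)=0$ for all $Y\in\mathfrak{p}$. Since $(M,\rho)$ is GO, Lemma~\ref{GO-criterion} furnishes some $Z\in\mathfrak{h}$ with $([X+Z,Y]_{\mathfrak{m}},X)=0$ for all $Y\in\mathfrak{m}$, in particular for all $Y\in\mathfrak{p}$. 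The point is that the bracket $[X+Z,Y]$, projected to $\mathfrak{p}$, depends on $Z$ only through how $\ad(Z)$ acts on $\mathfrak{p}$. Using the hypothesis $\mathfrak{h}=\mathfrak{h}'+C_{\mathfrak{h}}(\mathfrak{p})$, I decompose $Z=Z'+Z''$ with $Z'\in\mathfrak{h}'$ and $Z''\in C_{\mathfrak{h}}(\mathfrak{p})$. Then $[Z'',Y]=0$ for every $Y\in\mathfrak{p}$, so $[X+Z,Y]=[X+Z',Y]$ on $\mathfrak{p}$, and the GO-property for $(M,\rho)$ transfers verbatim to give $([X+Z',Y]_{\mathfrak{p}},X)=0$ for all $Y\in\mathfrak{p}$. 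This is precisely the GO-property for $\mathfrak{k}=\mathfrak{h}'\oplus\mathfrak{p}$, so $K/K\cap H$ is a GO-space.

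The special case where $H\subset K$ follows because then $\mathfrak{h}\subset\mathfrak{k}$, one may take $\mathfrak{h}'=\mathfrak{h}$, and the condition $\mathfrak{h}'+C_{\mathfrak{h}}(\mathfrak{p})=\mathfrak{h}$ holds trivially with $\mathfrak{h}'=\mathfrak{h}$; moreover $\mathfrak{k}=\mathfrak{h}\oplus\mathfrak{p}$ with $\mathfrak{p}=\mathfrak{k}\cap\mathfrak{m}$ is automatically a subalgebra containing $\mathfrak{h}$, so assertion~1) already gives total geodesy and the orbit is $K/H$. The main obstacle I anticipate is the careful bookkeeping of projections in assertion~2): one must confirm that replacing $[\cdot,\cdot]_{\mathfrak{m}}$ by $[\cdot,\cdot]_{\mathfrak{p}}$ is legitimate, i.e.\ that the $\mathfrak{p}$-component of the bracket computed inside $\mathfrak{g}$ agrees with the $\mathfrak{p}$-component of the bracket relative to the sub-decomposition $\mathfrak{k}=\mathfrak{h}'\oplus\mathfrak{p}$. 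This holds because $\mathfrak{p}$ is a common orthogonal summand and $\mathfrak{h}'\subset\mathfrak{h}$, so projecting $[X+Z',Y]$ onto $\mathfrak{p}$ gives the same result whether one first lands in $\mathfrak{m}$ and then in $\mathfrak{p}$, or projects directly; verifying this compatibility is the only genuinely delicate point, and it rests on the orthogonality of the splitting guaranteed by $\mathfrak{p}=\mathfrak{k}\cap\mathfrak{m}$ together with Lemma~\ref{skew-symmetryLemma}.
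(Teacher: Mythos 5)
Your proposal is correct and follows essentially the same route as the paper: assertion 1) is exactly the combination of Lemma~\ref{skew-symmetryLemma} with Proposition~\ref{totallygeodesicProp}, and assertion 2) rests on trading the element $Z\in\mathfrak{h}$ supplied by Lemma~\ref{GO-criterion} for an element of $\mathfrak{h}'$ by means of the hypothesis $\mathfrak{h}=\mathfrak{h}'+C_{\mathfrak{h}}(\mathfrak{p})$. The only cosmetic difference is that the paper adjusts $Z$ by elements of the larger centralizer $C_{\mathfrak{h}}(X)$ of the single vector $X$ (so that the modified element still satisfies the GO-equation for all $Y\in\mathfrak{m}$), whereas you split $Z=Z'+Z''$ with $Z''\in C_{\mathfrak{h}}(\mathfrak{p})$ and restrict to $Y\in\mathfrak{p}$ immediately; both versions are equally valid.
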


\begin{proof}
Clear that $K/K\cap H$ with the induced Riemannian metric is totally geodesic in $(M=G/H, \rho)$ by
Lemma \ref{skew-symmetryLemma} and  Proposition \ref{totallygeodesicProp}.

Now, consider any $X\in \mathfrak{p}$. By Lemma \ref{GO-criterion} there is $Z \in \mathfrak{h}$ such that
$\left([Y + Z, X]_{\mathfrak{m}},Y \right)=0$
for any $Y\in \mathfrak{m}$. It suffices to prove that we can choose such $Z$ from $\mathfrak{h}'$
(then we will have the following:
for any $X\in \mathfrak{p}$ there is $Z \in \mathfrak{h}'$ such that
$\left([Y + Z, X]_{\mathfrak{p}},Y \right)=0$ for any $Y\in \mathfrak{p}$,
that means the geodesic orbit property for the homogeneous space $K/K\cap H$ with the induced Riemannian metric).

Let $C_{\mathfrak{h}}(X)$ the centralizer of $X$ in $\mathfrak{h}$. It is easy to see that we can take any $Z+U$ instead of $Z$ for any $U\in C_{\mathfrak{h}}(X)$
in Lemma \ref{GO-criterion}. Since $\mathfrak{h}'+C_{\mathfrak{h}}(\mathfrak{p})=\mathfrak{h}$, then
$\mathfrak{h}'+C_{\mathfrak{h}}(X)=\mathfrak{h}$, hence, we may choose $U\in C_{\mathfrak{h}}(X)$ such that
$Z+U\in \mathfrak{h}'$, that proves the second assertion.
\end{proof}
\medskip

It should be noted that the equality $\mathfrak{h}'+C_{\mathfrak{h}}(\mathfrak{p})=\mathfrak{h}$ (see Proposition \ref{totgeodet})
is wrong in general, see Example \ref{examplenil} below.

\begin{remark}\label{rem5} In Proposition \ref{totgeodet}, the homogeneous space $K/K\cap H$ with the induced Riemannian metric
is always totally geodesic submanifold of $(M=G/H, \rho)$, hence it is a geodesic orbit Riemannian
manifold by Proposition \ref{tot}. But we could not point out
(in general) the connected isometry group of this manifold.  On the other hand, the subgroup $K'$
corresponded to the subalgebra $\mathfrak{k}'=\mathfrak{h}\oplus \mathfrak{p}$, act on this manifolds transitively
(but not necessarily effectively). Hence, the space $K'/H$ with the induced metric is geodesic orbit.
Note also that instead of $\mathfrak{h}$ in $\mathfrak{k}'$ one can take any subalgebra $\mathfrak{h}''$ in $\mathfrak{h}$ such that
$\mathfrak{h}'\subset \mathfrak{h}''$ and $C_{\mathfrak{h}}(\mathfrak{p})+\mathfrak{h}''=\mathfrak{h}$, where
$C_{\mathfrak{h}}(\mathfrak{p})$ is the centralizer of $\mathfrak{p}$ in $\mathfrak{h}$.
\end{remark}

\begin{remark}\label{rem4} If $G$ is compact or semisimple and $\mathfrak{m}$ is orthogonal to $\mathfrak{h}$ with respect to some $\Ad(G)$-invariant nondegenerate
(not necessarily positive definite)
inner product  $\langle \cdot, \cdot \rangle$, then the equality $\mathfrak{h}'+C_{\mathfrak{h}}(\mathfrak{p})=\mathfrak{h}$ holds.
Indeed, the equality $0=\langle U, \mathfrak{h}'\rangle$  for $U \in \mathfrak{h}$ implies $0=\langle U,[\mathfrak{p}, \mathfrak{p}]_{\mathfrak{h}}\rangle =
\langle U, [\mathfrak{p}, \mathfrak{p}]\rangle=\langle \mathfrak{p},[U, \mathfrak{p}]\rangle$, hence
$U \in C_{\mathfrak{h}}(\mathfrak{p})$. Note that for a general $G$ this is not the case.
\end{remark}

\begin{prop}\label{irred1} Let $(M=G/H, \rho)$ be a homogeneous Riemannian space with the reductive decomposition~{\rm(\ref{reductivedecomposition})}.
If $\mathfrak{p}\subset \mathfrak{m}$ is an $\ad(\mathfrak{h})$-invariant submodule, then
the subspace $C_{\mathfrak{h}}(\mathfrak{p}):=\{V\in \mathfrak{h}\,|\, [V,\mathfrak{p}]=0\}$ is an ideal in $\mathfrak{h}$.
If in addition $\mathfrak{p}$ is such that $[\mathfrak{p},\mathfrak{p}]\subset \mathfrak{p}\oplus \mathfrak{h}$, then
$[\mathfrak{p},\mathfrak{p}]_{\mathfrak{h}}=\operatorname{Lin}\{[X,Y]_{\mathfrak{h}}\,|\, X,Y\in \mathfrak{p}\}$ is an ideal in $\mathfrak{h}$.
Moreover, $B(C_{\mathfrak{h}}(\mathfrak{p}),[\mathfrak{p},\mathfrak{p}]_{\mathfrak{h}})=0$, where $B$ is the Killing form of the Lie algebra $\mathfrak{g}$,
under any of the following conditions:

{\rm 1)} $B(\mathfrak{p},\mathfrak{h})=0$;

{\rm 2)} $C_{\mathfrak{h}}(\mathfrak{p})$ is semisimple;

{\rm 3)} $\mathfrak{p}$ is $\ad(\mathfrak{h})$-irreducible;

{\rm 4)} $\mathfrak{p}$ has no 1-dimensional $\ad(\mathfrak{h})$-invariant submodule.
\end{prop}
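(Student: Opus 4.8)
The plan is to handle the three assertions in order, the first two by the Jacobi identity and the last by combining invariance of the Killing form with the reductivity of $\mathfrak{h}$. First I would show $C_{\mathfrak{h}}(\mathfrak{p})$ is an ideal: for $V\in C_{\mathfrak{h}}(\mathfrak{p})$, $W\in\mathfrak{h}$, $X\in\mathfrak{p}$, Jacobi gives $[[W,V],X]=[W,[V,X]]-[V,[W,X]]$, where $[V,X]=0$ by definition and $[V,[W,X]]=0$ since $[W,X]\in\mathfrak{p}$ (as $\mathfrak{p}$ is $\ad(\mathfrak{h})$-invariant) and $V$ centralizes $\mathfrak{p}$; hence $[W,V]\in C_{\mathfrak{h}}(\mathfrak{p})$. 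For the second assertion, assuming $[\mathfrak{p},\mathfrak{p}]\subset\mathfrak{p}\oplus\mathfrak{h}$, I would expand $[W,[X,Y]]=[[W,X],Y]+[X,[W,Y]]$ for $W\in\mathfrak{h}$, $X,Y\in\mathfrak{p}$; since $[W,X],[W,Y]\in\mathfrak{p}$ the right side lies in $[\mathfrak{p},\mathfrak{p}]$, and comparing $\mathfrak{h}$-components (the $\mathfrak{m}$-parts staying in $\mathfrak{p}$) yields $[W,[X,Y]_{\mathfrak{h}}]=[[W,X],Y]_{\mathfrak{h}}+[X,[W,Y]]_{\mathfrak{h}}\in[\mathfrak{p},\mathfrak{p}]_{\mathfrak{h}}$, so $[\mathfrak{p},\mathfrak{p}]_{\mathfrak{h}}$ is an ideal in $\mathfrak{h}$.

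The heart of the statement is $B(C_{\mathfrak{h}}(\mathfrak{p}),[\mathfrak{p},\mathfrak{p}]_{\mathfrak{h}})=0$. I would begin with the key identity, valid for all $V\in C_{\mathfrak{h}}(\mathfrak{p})$ and $X,Y\in\mathfrak{p}$: by invariance of $B$, $B(V,[X,Y])=B([V,X],Y)=0$ because $[V,X]=0$. Writing $[X,Y]=[X,Y]_{\mathfrak{h}}+[X,Y]_{\mathfrak{p}}$ (the $\mathfrak{m}$-part lying in $\mathfrak{p}$ by hypothesis), this gives $B(V,[X,Y]_{\mathfrak{h}})=-B(V,[X,Y]_{\mathfrak{p}})$, so it suffices to prove $B(V,[X,Y]_{\mathfrak{p}})=0$. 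Under condition (1), where $B(\mathfrak{p},\mathfrak{h})=0$, this is immediate. For the remaining cases I would introduce the vector $w_V\in\mathfrak{p}$ defined by $(w_V,Z)=B(V,Z)$ for all $Z\in\mathfrak{p}$ (using nondegeneracy of $(\cdot,\cdot)$ on $\mathfrak{p}$); then everything reduces to showing $w_V=0$, i.e. $B(C_{\mathfrak{h}}(\mathfrak{p}),\mathfrak{p})=0$.

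The next step is to show that $V\mapsto w_V$ is an $\ad(\mathfrak{h})$-equivariant map $C_{\mathfrak{h}}(\mathfrak{p})\to\mathfrak{p}$. Combining invariance of $B$ with the $\ad(\mathfrak{h})$-invariance of $(\cdot,\cdot)$ (so that $\ad(W)$ is $(\cdot,\cdot)$-skew for $W\in\mathfrak{h}$), one computes, for $Z\in\mathfrak{p}$, that $(w_{[W,V]},Z)=B([W,V],Z)=-B(V,[W,Z])=-(w_V,[W,Z])=([W,w_V],Z)$, whence $[W,w_V]=w_{[W,V]}$ (here $[W,V]\in C_{\mathfrak{h}}(\mathfrak{p})$ since that is an ideal). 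Two consequences follow: taking $W\in C_{\mathfrak{h}}(\mathfrak{p})$ gives $w_{[W,V]}=[W,w_V]=0$, so $w$ vanishes on $[C_{\mathfrak{h}}(\mathfrak{p}),C_{\mathfrak{h}}(\mathfrak{p})]$; and since $\mathfrak{h}$ is reductive, the center $\mathfrak{z}$ of the ideal $C_{\mathfrak{h}}(\mathfrak{p})$ lies in $\mathfrak{c(h)}$, so for $V\in\mathfrak{z}$ one has $[W,V]=0$ and thus $[W,w_V]=0$ for all $W\in\mathfrak{h}$. Writing $C_{\mathfrak{h}}(\mathfrak{p})=\mathfrak{z}\oplus[C_{\mathfrak{h}}(\mathfrak{p}),C_{\mathfrak{h}}(\mathfrak{p})]$, this shows the image of $w$ consists of $\ad(\mathfrak{h})$-fixed vectors of $\mathfrak{p}$. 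The four conditions are then dispatched uniformly: under (2), semisimplicity forces $\mathfrak{z}=0$ and $C_{\mathfrak{h}}(\mathfrak{p})=[C_{\mathfrak{h}}(\mathfrak{p}),C_{\mathfrak{h}}(\mathfrak{p})]$, so $w\equiv0$; under (4) there are no nonzero fixed vectors (a fixed vector spans a $1$-dimensional invariant submodule, by skew-symmetry of $\ad(\mathfrak{h})$), so again $w\equiv0$; condition (3) reduces to (4), since an irreducible $\mathfrak{p}$ of dimension $\geq2$ has no fixed vectors while a $1$-dimensional $\mathfrak{p}$ has $[\mathfrak{p},\mathfrak{p}]=0$ and the claim is vacuous. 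In each case $B(C_{\mathfrak{h}}(\mathfrak{p}),\mathfrak{p})=0$, and the key identity then gives $B(V,[X,Y]_{\mathfrak{h}})=0$.

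The Jacobi-identity manipulations in the first two assertions are routine. The conceptual core, and the step I expect to be the main obstacle, is the identification of the image of $V\mapsto w_V$ with the $\ad(\mathfrak{h})$-fixed vectors of $\mathfrak{p}$; this is precisely what makes the structural hypotheses (2)--(4) interchangeable. The subtlety to verify carefully is the equivariance relation $[W,w_V]=w_{[W,V]}$, which intertwines the invariance of the indefinite Killing form $B$ with that of the positive-definite metric $(\cdot,\cdot)$ and depends essentially on $\ad(\mathfrak{h})$ being $(\cdot,\cdot)$-skew-symmetric.
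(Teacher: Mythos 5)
Your proof is correct, and it follows the paper for most of the statement: the two Jacobi-identity arguments for the ideal properties, the key identity $B(V,[X,Y])=B([V,X],Y)=0$ for $V\in C_{\mathfrak{h}}(\mathfrak{p})$, the reduction to proving $B(C_{\mathfrak{h}}(\mathfrak{p}),\mathfrak{p})=0$, case 1), and the splitting of case 3) into $\dim\mathfrak{p}=1$ (vacuous) and $\dim\mathfrak{p}\geq 2$ are all exactly the paper's steps. Where you genuinely diverge is the mechanism for cases 2)--4). The paper stays entirely inside the Killing form: from $B([\mathfrak{g},\mathfrak{p}],C_{\mathfrak{h}}(\mathfrak{p}))=B(\mathfrak{g},[\mathfrak{p},C_{\mathfrak{h}}(\mathfrak{p})])=0$ it gets $B(\mathfrak{p},[C_{\mathfrak{h}}(\mathfrak{p}),C_{\mathfrak{h}}(\mathfrak{p})])=0$, which settles case 2) via $C_{\mathfrak{h}}(\mathfrak{p})=[C_{\mathfrak{h}}(\mathfrak{p}),C_{\mathfrak{h}}(\mathfrak{p})]$, and it settles cases 3) and 4) by observing that either hypothesis forces $[\mathfrak{h},\mathfrak{p}]=\mathfrak{p}$, whence $B(\mathfrak{p},C_{\mathfrak{h}}(\mathfrak{p}))=B([\mathfrak{h},\mathfrak{p}],C_{\mathfrak{h}}(\mathfrak{p}))=0$. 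You instead dualize $B(V,\cdot)|_{\mathfrak{p}}$ into a vector $w_V\in\mathfrak{p}$ using the positive-definite product $(\cdot,\cdot)$, prove the equivariance $w_{[W,V]}=[W,w_V]$ (which I checked; it is correct, and it does use that $C_{\mathfrak{h}}(\mathfrak{p})$ is an ideal and that $\ad(\mathfrak{h})$ is $(\cdot,\cdot)$-skew), and show that the image of $w$ consists of $\ad(\mathfrak{h})$-fixed vectors, which vanish under each of 2)--4). The two routes are dual to one another: your ``no nonzero fixed vectors'' is equivalent to the paper's surjectivity $[\mathfrak{h},\mathfrak{p}]=\mathfrak{p}$ via complete reducibility. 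The paper's version is shorter and needs nothing beyond invariance of $B$; yours buys a uniform picture of why 2)--4) are interchangeable --- the obstruction sits precisely in the pairing of the center of $C_{\mathfrak{h}}(\mathfrak{p})$ with the fixed vectors of $\mathfrak{p}$ --- but at the cost of the structure theory of ideals in $\mathfrak{h}$, namely $C_{\mathfrak{h}}(\mathfrak{p})=\mathfrak{z}\oplus[C_{\mathfrak{h}}(\mathfrak{p}),C_{\mathfrak{h}}(\mathfrak{p})]$ with $\mathfrak{z}$ contained in the center of $\mathfrak{h}$; you invoke this without proof, and while it is a standard fact here (since $\mathfrak{h}$ is the Lie algebra of the compact group $H$, every ideal is a direct Lie algebra summand), it deserves at least a one-line justification.
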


\begin{proof}
For any $V \in \mathfrak{h}$ and any $U\in C_{\mathfrak{h}}(\mathfrak{p})$, we get
$$
[[U,V],\mathfrak{p}]\subset [[U,\mathfrak{p}],V]+[U,[V,\mathfrak{p}]]\subset [U,\mathfrak{p}]=0,
$$
which proves the first assertion.

The second assertion follows from
$$
[U,[X,Y]_{\mathfrak{h}}]=[U,[X,Y]]_{\mathfrak{h}}=[[U,X],Y]]_{\mathfrak{h}}+[X,[U,Y]]_{\mathfrak{h}}\subset
[\mathfrak{p},\mathfrak{p}]_{\mathfrak{h}},
$$
where $U\in \mathfrak{h}$ and $X,Y \in \mathfrak{p}$.

In order to prove the third assertion point out that
$B([\mathfrak{g},\mathfrak{p}],C_{\mathfrak{h}}(\mathfrak{p}))=B(\mathfrak{g},[\mathfrak{p},C_{\mathfrak{h}}(\mathfrak{p})]]=0$.
In particular, $B([\mathfrak{p},\mathfrak{p}],C_{\mathfrak{h}}(\mathfrak{p}))=0$. Therefore,
$B(\mathfrak{p},\mathfrak{h})=0$ implies $B(\mathfrak{p},C_{\mathfrak{h}}(\mathfrak{p}))=0$ and
the condition 1) implies $B(C_{\mathfrak{h}}(\mathfrak{p}),[\mathfrak{p},\mathfrak{p}]_{\mathfrak{h}})=0$.

Note that
$B(\mathfrak{p},[\mathfrak{g},C_{\mathfrak{h}}(\mathfrak{p})])=-B([\mathfrak{g},\mathfrak{p}],C_{\mathfrak{h}}(\mathfrak{p}))=0$, in particular,
$B(\mathfrak{p},[C_{\mathfrak{h}}(\mathfrak{p}),C_{\mathfrak{h}}(\mathfrak{p})])=0$. If $C_{\mathfrak{h}}(\mathfrak{p})$ is semisimple this implies
$B(\mathfrak{p},C_{\mathfrak{h}}(\mathfrak{p}))=0$ and the condition  2) implies $B(C_{\mathfrak{h}}(\mathfrak{p}),[\mathfrak{p},\mathfrak{p}]_{\mathfrak{h}})=0$.

If $\dim(\mathfrak{p})=1$, then $[\mathfrak{p},\mathfrak{p}]_{\mathfrak{h}}=0$ and all is clear. If  $\dim(\mathfrak{p})\geq 2$,
and $\mathfrak{p}$ is $\ad(\mathfrak{h})$-irreducible then
$[\mathfrak{h},\mathfrak{p}]=\mathfrak{p}$, hence $B(\mathfrak{p},C_{\mathfrak{h}}(\mathfrak{p}))=0$ and, consequently, the condition~3) implies
$B([\mathfrak{p},\mathfrak{p}]_{\mathfrak{h}},C_{\mathfrak{h}}(\mathfrak{p}))=0$.
If $\mathfrak{p}$ has no 1-dimensional $\ad(\mathfrak{h})$-invariant submodule, then we also get
$[\mathfrak{h},\mathfrak{p}]=\mathfrak{p}$, hence, the condition 4) implies
$B([\mathfrak{p},\mathfrak{p}]_{\mathfrak{h}},C_{\mathfrak{h}}(\mathfrak{p}))=0$.
\end{proof}

\begin{remark}\label{rem3}
If $B(\mathfrak{h},\mathfrak{m})=0$, then we get $B([\mathfrak{p},\mathfrak{p}]_{\mathfrak{h}},C_{\mathfrak{h}}(\mathfrak{p}))=0$ for any
$\ad(\mathfrak{h})$-invariant submodule $\mathfrak{p}$ such that $[\mathfrak{p},\mathfrak{p}]\subset \mathfrak{p}\oplus \mathfrak{h}$.
Moreover, if $B$ is non-degenerate on $\mathfrak{p}$, then
$\mathfrak{h}=[\mathfrak{p},\mathfrak{p}]_{\mathfrak{h}}\oplus C_{\mathfrak{h}}(\mathfrak{p})$ in this case.
Indeed, if $X\in \mathfrak{h}$ satisfies $B([\mathfrak{p},\mathfrak{p}]_{\mathfrak{h}},X)=0$, then
$0=B([\mathfrak{p},\mathfrak{p}]_{\mathfrak{h}},X)=B([\mathfrak{p},\mathfrak{p}],X)=
B(\mathfrak{p},[\mathfrak{p},X])$, hence $X\in C_{\mathfrak{h}}(\mathfrak{p})$.
\end{remark}

Now, we apply Proposition \ref{totgeodet} to get one well known result (see e.~g. Theorem 1.14 in~\cite{Gor96}).

\begin{corollary}\label{totgeodetn}
Let $(M=G/H, \rho)$ be a geodesic orbit Riemannian space and let $K$ be a connected closed subgroup of $G$, normalized by $H$.
Then the orbit $K/K\cap H$ of $K$ trough the point $o=eH$ is a totally geodesic submanifold of $M$, hence, a geodesic orbit Riemannian manifold.
In particular, the orbits of the radical $R(G)$, of the nilradical $N(G)$, and of a Levi group $S$, that are stable under $H$, through the point $o=eH$ are
geodesic orbit Riemannian manifolds.
\end{corollary}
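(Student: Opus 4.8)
The plan is to derive everything from Proposition \ref{totgeodet} together with the last sentence of its statement, which already asserts that any connected subgroup $K \subset G$ containing $H$ has a totally geodesic orbit that is geodesic orbit. The subtlety here is that the subgroups of interest ($R(G)$, $N(G)$, a Levi group $S$) need not contain $H$; they are merely normalized by $H$. So the first step is to pass from such a $K$ to the subgroup $\widetilde{K} := K \cdot H$ (equivalently, the connected subgroup with Lie algebra $\mathfrak{k} + \mathfrak{h}$). Since $H$ normalizes $K$, the set $\widetilde{K}$ is a subgroup, and on the Lie-algebra level $\mathfrak{k}$ is an $\ad(\mathfrak{h})$-invariant ideal of $\widetilde{\mathfrak{k}} = \mathfrak{k} + \mathfrak{h}$. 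This is the natural candidate to which Proposition \ref{totgeodet}(2) applies.

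Next I would identify the tangent space of the orbit. Set $\mathfrak{p} := \mathfrak{k} \cap \mathfrak{m}$, the part of $\mathfrak{k}$ transverse to $\mathfrak{h}$; because $\mathfrak{k}$ is $\ad(\mathfrak{h})$-invariant and $\mathfrak{m}$ is an $\Ad(H)$-invariant complement, $\mathfrak{p}$ is an $\ad(\mathfrak{h})$-invariant submodule and the orbit $K o = Ko$ through $o = eH$ has tangent space $\mathfrak{p}$ at $o$. The decomposition $\mathfrak{k} + \mathfrak{h} = \mathfrak{h} \oplus \mathfrak{p}$ exhibits $\widetilde{\mathfrak{k}}$ in exactly the form $\mathfrak{h}' \oplus \mathfrak{p}$ of Proposition \ref{totgeodet}, with $\mathfrak{h}' = \mathfrak{h}$ here since $\widetilde{\mathfrak{k}} \supset \mathfrak{h}$. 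Thus the hypotheses of the final clause of Proposition \ref{totgeodet}(2) are met verbatim: $\widetilde{K}$ is a connected subgroup containing $H$, so its orbit $\widetilde{K}o = \widetilde{K}/H$ is totally geodesic and geodesic orbit. Since $K o = \widetilde{K} o$ as sets (adding $H$ does not enlarge the orbit through $o$, as $H$ fixes $o$), the orbit $K/K\cap H$ coincides with this orbit and inherits both properties. Finally, Proposition \ref{tot} guarantees that any closed totally geodesic submanifold of a GO-manifold is itself GO, which re-confirms the conclusion intrinsically.

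For the concrete subgroups, I would simply invoke that $R(G)$ and $N(G)$ correspond to the characteristic ideals $\mathfrak{r}(\mathfrak{g})$ and $\mathfrak{n}(\mathfrak{g})$, which are automatically $\Ad(H)$-invariant, and that a Levi group $S$ can be chosen $\Ad(H)$-invariant by the Mostow-type argument recalled in Section 2 (there exists a Levi factor $\mathfrak{s}$ invariant under the fully reducible group $\Ad(H)$). In each case the relevant subgroup is normalized by $H$, so the general statement applies and yields the desired geodesic orbit orbits. The only point requiring care is the harmless identification $K o = (K\cdot H) o$ and the verification that $K \cap H$ is the isotropy of $K$ at $o$; both are routine once one observes that $H$ stabilizes $o$. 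I do not expect a genuine obstacle, as the substance is fully carried by Proposition \ref{totgeodet}.
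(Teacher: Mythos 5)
Your overall strategy (pass to $\widetilde{K}=K\cdot H$, invoke the final clause of Proposition \ref{totgeodet}, and use $Ko=\widetilde{K}o$) is viable, but the middle step, as written, is false. You set $\mathfrak{p}:=\mathfrak{k}\cap\mathfrak{m}$ and claim that $\mathfrak{p}$ is the tangent space of $Ko$ at $o$ and that $\mathfrak{k}+\mathfrak{h}=\mathfrak{h}\oplus\mathfrak{p}$. For an arbitrary, pre-assigned reductive complement $\mathfrak{m}$ this fails: the tangent space of $Ko$ at $o$ is the projection of $\mathfrak{k}$ to $\mathfrak{m}$ along $\mathfrak{h}$, and this coincides with $\mathfrak{k}\cap\mathfrak{m}$ only when $\mathfrak{k}=(\mathfrak{k}\cap\mathfrak{h})\oplus(\mathfrak{k}\cap\mathfrak{m})$, which the $\Ad(H)$-invariance of $\mathfrak{k}$, $\mathfrak{h}$, $\mathfrak{m}$ alone does not guarantee. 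The paper's own Example \ref{nilradspesex} refutes your claim in exactly the situation the corollary targets: for the GO space $U(2)/S^1$ considered there, with $K=N(G)$ (closed, connected, normal, hence normalized by $H$), one has $\mathfrak{k}=\mathfrak{n(g)}=\Lin\{e_4\}$ and $\mathfrak{k}\cap\mathfrak{h}=\mathfrak{k}\cap\mathfrak{m}=0$, yet the orbit of $N(G)$ through $o$ is one-dimensional; so your $\mathfrak{p}$ is zero and $\mathfrak{h}\oplus\mathfrak{p}\neq\mathfrak{h}+\mathfrak{k}$.

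Fortunately that step is not actually needed, and there are two repairs. (i) Within your route: the final clause of Proposition \ref{totgeodet} requires only that $\widetilde{K}=K\cdot H$ be a connected subgroup containing $H$ (a subgroup because $H$ normalizes $K$; closed because $K$ is closed and $H$ compact; connected at least when $H$ is connected --- for disconnected $H$ one argues on the Lie algebra level), together with your correct observation $\widetilde{K}o=Ko$. If you want the reductive picture, the right subspace is $\widetilde{\mathfrak{p}}:=(\mathfrak{h}+\mathfrak{k})\cap\mathfrak{m}$, for which $\mathfrak{h}+\mathfrak{k}=\mathfrak{h}\oplus\widetilde{\mathfrak{p}}$ holds automatically, precisely because $\mathfrak{h}\subset\mathfrak{h}+\mathfrak{k}$ (the $\mathfrak{h}$-component of any element of $\mathfrak{h}+\mathfrak{k}$ lies again in $\mathfrak{h}+\mathfrak{k}$); note $\widetilde{\mathfrak{p}}$ is in general larger than $\mathfrak{k}\cap\mathfrak{m}$. (ii) The paper's repair is different: it re-chooses the reductive complement adapted to $K$, taking an $\Ad(H)$-invariant complement $\mathfrak{p}$ to $\mathfrak{k}\cap\mathfrak{h}$ inside $\mathfrak{k}$ and an $\Ad(H)$-invariant complement $\mathfrak{q}$ to $\mathfrak{h}+\mathfrak{k}$ in $\mathfrak{g}$, and sets $\mathfrak{m}:=\mathfrak{p}\oplus\mathfrak{q}$ --- legitimate since the GO property does not depend on the choice of reductive decomposition --- after which Proposition \ref{totgeodet}(1) gives the totally geodesic property and Proposition \ref{tot} gives the GO-manifold conclusion. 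The paper deliberately uses only part (1): by Example \ref{examplenil}, the condition $\mathfrak{h}'+C_{\mathfrak{h}}(\mathfrak{p})=\mathfrak{h}$ can fail (e.g.\ for noncommutative $\mathfrak{n(g)}$), so the orbit need not be GO with respect to $K$ itself. Your detour through $K\cdot H$ sidesteps that issue and, once repaired, yields slightly more: the orbit is a geodesic orbit space with respect to $K\cdot H$, in agreement with Remark \ref{rem5}.
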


\begin{proof}
Since $K$ is normalized by $H$, then its Lie algebra $\mathfrak{k}$ is $\Ad(H)$-invariant.
The Lie algebra $\mathfrak{h}\cap \mathfrak{k}$ (being $\Ad(H)$-invariant) admits an $\Ad(H)$-invariant complement $\mathfrak{p}$ in $\mathfrak{k}$.
On the other hand, the $\Ad(H)$-invariant Lie subalgebra $\mathfrak{h}+\mathfrak{k}$ admits an $\Ad(H)$-invariant complement $\mathfrak{q}$ in $\mathfrak{g}$.
Hence, we can take $\mathfrak{m}:=\mathfrak{p} \oplus \mathfrak{q}$ as an $\Ad(H)$-invariant complement to $\mathfrak{h}$ in $\mathfrak{g}$.
It is easy to see that $\mathfrak{k}=(\mathfrak{k} \cap \mathfrak{h})\oplus(\mathfrak{k} \cap \mathfrak{m})=(\mathfrak{k} \cap \mathfrak{h})\oplus \mathfrak{p}$,
and $[\mathfrak{p},\mathfrak{p}]\subset \mathfrak{p} \oplus (\mathfrak{k} \cap \mathfrak{h})\subset \mathfrak{p} \oplus  \mathfrak{h}$.
Hence, it suffices to apply Proposition \ref{totgeodet}.
\end{proof}
\smallskip

In general, we do not know the details of the reductive complement $\mathfrak{m}$ in the proof of this corollary
(since $\mathfrak{m}$ depends on the subgroup $K$). Hence, we could not obtain general results on the isometry group $G'$ with respect to which
a given orbit of $K$ is geodesic orbit space.
But it is possible to get such results (using Proposition \ref{totgeodet}) for some concrete subgroups $K$.
\medskip

It should be noted also that the space $K/K\cap H$ in Corollary \ref{totgeodetn} {\it should not be a geodesic orbit Riemannian space}
(in this case the full isometry group is more extensive than $K$, since $K/K\cap H$ is geodesic orbit Riemannian manifold).

\begin{example}\label{examplenil}
Let us consider the case $K=N(G)$. We know that $\mathfrak{n(g)} \cap \mathfrak{h}=0$, hence $\mathfrak{p}=\mathfrak{n(g)}$. Clear that
$[\mathfrak{p},\mathfrak{p}]_{\mathfrak{h}}=0$. Hence, $\mathfrak{h}'=0$, $\mathfrak{k}=\mathfrak{p}=\mathfrak{n(g)}$ in terms of Proposition \ref{totgeodet}.
If we suppose that $C_{\mathfrak{h}}(\mathfrak{p})=\mathfrak{h}$ in order to have the equality  $\mathfrak{h}'+C_{\mathfrak{h}}(\mathfrak{p})=\mathfrak{h}$,
then $N(G)$ with the induces Riemannian metric should be a geodesic orbit space. But it is possible if and only if all operators
$\ad(Y)|_{\mathfrak{n(g)}}$, $Y\in \mathfrak{n(g)}$, are skew-symmetric. Indeed, since the isotropy algebra assumed to be trivial, then the GO-condition
(see Lemma \ref{GO-criterion}) is the following: for any $X,Y \in \mathfrak{n(g)}$ the equality $([X,Y],X)=0$ holds and
$\ad(Y)|_{\mathfrak{n(g)}}$ is a skew-symmetric operator.
On the other hand, all operators $\ad(Y)|_{\mathfrak{n(g)}}$, $Y\in \mathfrak{n(g)}$, are nilpotent. Therefore, $\mathfrak{n(g)}$ should be commutative.
Hence, for any non-commutative $\mathfrak{p}=\mathfrak{n(g)}\subset \mathfrak{m}$ we get $\mathfrak{h}'+C_{\mathfrak{h}}(\mathfrak{p})\neq \mathfrak{h}$.
\end{example}

\begin{example}\label{examplesem}
Let us consider the weakly symmetric space $G/H=Sp(n+1)U(1)/Sp(n)U(1)$ (diffeomorphic to the sphere $S^{4n+3}$).
This space admits a 3-parameter family of
invariant Riemannian metrics that are geodesic orbit. The subalgebra $sp(n+1)$ is stable under $\Ad_G(H)$,
hence $Sp(n+1)/Sp(n)$ with induced metrics is totally geodesic (in fact it coincides with the original manifolds $S^{4n+3}$) and hence geodesic
orbit Riemannian manifolds. On the other hand, the space of $Sp(n+1)$-invariant geodesic orbit metrics on $Sp(n+1)/Sp(n)$ is only 2-dimensional.
Therefore, for a suitable choice of $Sp(n+1)U(1)$-invariant metrics (in fact, for almost all such metrics) on $S^{4n+3}$,
the orbit of $Sp(n+1)$ through the point $o=eSp(n)U(1)$
is not a geodesic orbit space.
One can find details in~\cite{Nik2013}, see also Remark 2 in \cite{DuKoNi}.
\end{example}
\medskip

We emphasize in particular the fact that
the study of general geodesic orbit Riemannian manifolds
does not reduced completely to the study of nilmanifolds and homomogeneous spaces $G/H$ with semisimple $G$.

\section{On the structure of the radical and the nilradical}

Let us consider an arbitrary geodesic orbit Riemannian space $(G/H, \rho)$ and an $\Ad(H)$-invariant decomposition
$\mathfrak{g}=\mathfrak{h}\oplus \mathfrak{m}$ with $B(\mathfrak{h},\mathfrak{m})=0$, where $B$ is the Killing form of~$\mathfrak{g}$, and $\rho$
generated by an inner product $(\cdot,\cdot)$ on  $\mathfrak{m}$.
\smallskip

Let $C_{\mathfrak{g}}(\mathfrak{h})$ be the centralizer of $\mathfrak{h}$ in $\mathfrak{g}$.
It is clear that $C_{\mathfrak{g}}(\mathfrak{h})=C_{\mathfrak{g}}(\mathfrak{h})\cap \mathfrak{h}+C_{\mathfrak{g}}(\mathfrak{h}) \cap \mathfrak{m}$,
where $C_{\mathfrak{g}}(\mathfrak{h})\cap \mathfrak{h}=C_{\mathfrak{h}}(\mathfrak{h})$ is the center of $\mathfrak{h}$.
Obviously, $C_{\mathfrak{g}}(\mathfrak{h})$  and
$C_{\mathfrak{g}}(\mathfrak{h})\oplus [\mathfrak{h},\mathfrak{h}]$ are subalgebra in the Lie algebra $\mathfrak{g}$ with
$[C_{\mathfrak{g}}(\mathfrak{h}), [\mathfrak{h},\mathfrak{h}]]$=0.

\begin{lemma}\label{comni3}
For any $Y\in C_{\mathfrak{g}}(\mathfrak{h})$ the operator $\ad(Y)|_{\mathfrak{m}}$ is skew-symmetric.
If $X\in C_{\mathfrak{g}}(\mathfrak{h}) \cap \mathfrak{m}$, then
$([X,Z]_{\mathfrak{m}},X)=0$ for any $Z\in \mathfrak{g}$.
\end{lemma}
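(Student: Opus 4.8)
The plan is to establish the two claims of Lemma~\ref{comni3} separately, using the defining property of the centralizer together with the fact that the decomposition $\mathfrak{g}=\mathfrak{h}\oplus\mathfrak{m}$ is $\Ad(H)$-invariant. The key observation for the first claim is that for $Y\in C_{\mathfrak{g}}(\mathfrak{h})$, the operator $\ad(Y)$ commutes with $\ad(Z)$ for all $Z\in\mathfrak{h}$, so that $\ad(Y)$ is an $\Ad(H)$-equivariant (hence $\mathfrak{m}$-preserving) map; skew-symmetry should then follow from the GO-property applied along the direction $Y$.

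First I would prove the skew-symmetry of $\ad(Y)|_{\mathfrak{m}}$ for $Y\in C_{\mathfrak{g}}(\mathfrak{h})$. Write $Y=Y_{\mathfrak{h}}+Y_{\mathfrak{m}}$ with $Y_{\mathfrak{h}}\in\mathfrak{h}\cap C_{\mathfrak{g}}(\mathfrak{h})=C_{\mathfrak{h}}(\mathfrak{h})$ (the center of $\mathfrak{h}$) and $Y_{\mathfrak{m}}\in\mathfrak{m}\cap C_{\mathfrak{g}}(\mathfrak{h})$. Since $Y_{\mathfrak{h}}\in\mathfrak{h}$ acts on $\mathfrak{m}$ by a skew-symmetric operator (the inner product $(\cdot,\cdot)$ is $\Ad(H)$-invariant), it suffices to handle $X:=Y_{\mathfrak{m}}\in C_{\mathfrak{g}}(\mathfrak{h})\cap\mathfrak{m}$, which also reduces the first claim to the second. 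For this $X$, I would apply Lemma~\ref{GO-criterion}: there is $Z\in\mathfrak{h}$ with $([X+Z,W]_{\mathfrak{m}},X)=0$ for all $W\in\mathfrak{m}$. But $[Z,X]=0$ because $X\in C_{\mathfrak{g}}(\mathfrak{h})$ and $Z\in\mathfrak{h}$, so the compensating term vanishes and we obtain $([X,W]_{\mathfrak{m}},X)=0$ directly, i.e. $([W,X]_{\mathfrak{m}},X)=0$ for all $W\in\mathfrak{m}$. Rewriting this as $(\ad(W)X,X)=0$ on the $\mathfrak{m}$-part shows precisely the stated identity $([X,Z]_{\mathfrak{m}},X)=0$ (here relabeling $W$ as $Z\in\mathfrak{g}$, after noting the $\mathfrak{h}$-component of $Z$ contributes nothing since $X$ is centralized); this is the second assertion.

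To finish the first assertion I would upgrade the quadratic identity $(\ad(X)W,W')=0$ at $W=W'$ into full skew-symmetry by polarization. Replacing $X$ above by $X$ and noting $(\ad(X)W,W)=([X,W]_{\mathfrak{m}},W)=0$ for all $W\in\mathfrak{m}$, polarizing in $W$ yields $([X,W]_{\mathfrak{m}},W')+([X,W']_{\mathfrak{m}},W)=0$, which is exactly the statement that $\ad(X)|_{\mathfrak{m}}$ is $(\cdot,\cdot)$-skew-symmetric. Since $\ad(Y)|_{\mathfrak{m}}=\ad(Y_{\mathfrak{h}})|_{\mathfrak{m}}+\ad(X)|_{\mathfrak{m}}$ is a sum of two skew-symmetric operators, $\ad(Y)|_{\mathfrak{m}}$ is skew-symmetric as claimed.

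The only subtlety worth flagging — the main obstacle, such as it is — lies in verifying that $\ad(X)$ and $\ad(Y_{\mathfrak{h}})$ genuinely preserve $\mathfrak{m}$, so that the $\mathfrak{m}$-projection in the bracket is compatible with passing to $\ad(\cdot)|_{\mathfrak{m}}$. This is where $Y\in C_{\mathfrak{g}}(\mathfrak{h})$ is essential: because $[\mathfrak{h},Y]=0$, the operator $\ad(Y)$ is $\Ad(H)$-equivariant and hence maps the $\Ad(H)$-invariant subspace $\mathfrak{m}$ into itself, so $[X,W]_{\mathfrak{m}}=[X,W]$ for $W\in\mathfrak{m}$ and the projections cause no trouble. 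I expect the whole argument to be short once this equivariance is in place.
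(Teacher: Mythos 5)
Your treatment of the \emph{second} assertion is essentially the paper's own argument and is correct: apply the GO-criterion at $X\in C_{\mathfrak{g}}(\mathfrak{h})\cap\mathfrak{m}$, kill the compensating term via $([Z,W]_{\mathfrak{m}},X)=-(W,[Z,X])=0$ (using $\Ad(H)$-invariance of $(\cdot,\cdot)$ and $[Z,X]=0$), and note that the $\mathfrak{h}$-component of an arbitrary $Z\in\mathfrak{g}$ contributes nothing. But your derivation of the \emph{first} assertion has a genuine gap. What you actually established is $([X,W]_{\mathfrak{m}},X)=0$ for all $W\in\mathfrak{m}$: the inner product is taken against the \emph{fixed} vector $X$, i.e.\ the image of the projected operator $\ad(X)$ is orthogonal to $X$. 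What polarization requires is the vanishing of the quadratic form, $([X,W]_{\mathfrak{m}},W)=0$ for all $W\in\mathfrak{m}$, with the running variable $W$ in the second slot. These are different statements (the orthogonal projection onto $X^{\perp}$ satisfies the first identically but not the second), and when you write ``noting $(\ad(X)W,W)=([X,W]_{\mathfrak{m}},W)=0$'' you silently swap $X$ for $W$ in the second argument; nothing you proved justifies that, so the claimed reduction of the first assertion to the second is also invalid. The paper gets the correct identity by reversing the quantifiers: the GO-criterion is applied at an \emph{arbitrary} point $W\in\mathfrak{m}$ (with compensator $Z_W$ depending on $W$), taking the centralizer element $X$ as the direction, so that $0=([W+Z_W,X]_{\mathfrak{m}},W)=([W,X]_{\mathfrak{m}},W)+([Z_W,X],W)$, and the compensating term now vanishes \emph{directly} because $[Z_W,X]=0$. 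This yields $([W,X]_{\mathfrak{m}},W)=0$ for every $W$, which is exactly the quadratic form you need; the fix is one line, but it is ``GO at every $W$, direction $X$,'' not ``GO at $X$, every direction $W$.''

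A second, lesser problem: your argument that $\ad(Y)$ preserves $\mathfrak{m}$ is not valid. Commuting with $\ad(\mathfrak{h})$ makes $\ad(Y)$ equivariant, but an equivariant map need not preserve a given invariant subspace; it can, for instance, carry a trivial $\ad(\mathfrak{h})$-submodule of $\mathfrak{m}$ into the center of $\mathfrak{h}$, which is also a trivial submodule --- and $C_{\mathfrak{g}}(\mathfrak{h})\cap\mathfrak{m}$ consists precisely of vectors spanning trivial submodules, so this is exactly the dangerous configuration. The paper instead uses the standing choice of $\mathfrak{m}$ in this section, $B(\mathfrak{h},\mathfrak{m})=0$ with $B$ negative definite on $\mathfrak{h}$, so that $\mathfrak{m}=\{V\in \mathfrak{g}\,|\,B(V,\mathfrak{h})=0\}$ and $B([Y,\mathfrak{m}],\mathfrak{h})=-B(\mathfrak{m},[Y,\mathfrak{h}])=0$ gives $[Y,\mathfrak{m}]\subset\mathfrak{m}$. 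Since the statement $\ad(Y)|_{\mathfrak{m}}$ presupposes this invariance, you need this Killing-form argument (or an equivalent one), not equivariance alone.
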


\begin{proof}
Let us prove the first assertion. Note that $[Y,\mathfrak{m}]\subset \mathfrak{m}$ for any $Y\in C_{\mathfrak{g}}(\mathfrak{h})$,
that follows from $B([Y,\mathfrak{m}],\mathfrak{h})=-B(\mathfrak{m},[Y,\mathfrak{h}])=-B(\mathfrak{m},0)=0$.
Obviously,  $\ad(Y)|_{\mathfrak{m}}$ is skew-symmetric for any $Y\in \mathfrak{h}$.
Now, take any $Y\in  C_{\mathfrak{g}}(\mathfrak{h})\cap \mathfrak{m}$.
For any $X\in \mathfrak{m}$ there is
$Z\in \mathfrak{h}$ such that $0=([X+Z,Y]_{\mathfrak{m}},X)=([X,Y]_{\mathfrak{m}},X)+([Z,Y],X)=([X,Y]_{\mathfrak{m}},X)$, because $[\mathfrak{h},Y]=0$.
Therefore, $\ad(Y)|_{\mathfrak{m}}$ is skew-symmetric in this case too.

Let us prove the second assertion. For any $X\in C_{\mathfrak{g}}(\mathfrak{h}) \cap \mathfrak{m}$ there is
$Z\in \mathfrak{h}$ such that $0=([X+Z,Y]_{\mathfrak{m}},X)=([X,Y]_{\mathfrak{m}},X)+([Z,Y],X)=
([X,Y]_{\mathfrak{m}},X)-(Y,[Z,X])=([X,Y]_{\mathfrak{m}},X)$ for any
$Y\in\mathfrak{m}$, since $[\mathfrak{h},X]=0$. For $Y\in \mathfrak{h}$, the equality $([X,Y]_{\mathfrak{m}},X)=0$ is obvious.
\end{proof}

\begin{corollary}\label{cor1}
The Lie algebra $\mathfrak{k}:=C_{\mathfrak{g}}(\mathfrak{h})\oplus [\mathfrak{h},\mathfrak{h}]$ is compactly embedded in $\mathfrak{g}$.
The Killing form $B$ of $\mathfrak{g}$ is non-positive on $\mathfrak{k}$. Moreover,
$B(Y,Y)=0$ for $Y\in \mathfrak{k}$ if and only if  $Y$ is in the center of~$\mathfrak{g}$.
\end{corollary}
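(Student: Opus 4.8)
The plan is to build a single inner product on $\mathfrak{g}$ with respect to which $\ad(X)$ is skew-symmetric for every $X \in \mathfrak{k}$; the two statements about $B$ will then follow formally from the earlier Lemma asserting that the Killing form is negative semi-definite on compactly embedded subalgebras. First I would rewrite $\mathfrak{k}$ in a more convenient form. Since $\mathfrak{h}$ is reductive, $\mathfrak{h} = C_{\mathfrak{h}}(\mathfrak{h}) \oplus [\mathfrak{h},\mathfrak{h}]$, and since $C_{\mathfrak{g}}(\mathfrak{h}) = C_{\mathfrak{h}}(\mathfrak{h}) \oplus (C_{\mathfrak{g}}(\mathfrak{h}) \cap \mathfrak{m})$, setting $\mathfrak{v} := C_{\mathfrak{g}}(\mathfrak{h}) \cap \mathfrak{m}$ gives the clean splitting $\mathfrak{k} = \mathfrak{h} \oplus \mathfrak{v}$. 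That $\mathfrak{k}$ is a subalgebra is already recorded before the statement; note in particular that $[\mathfrak{h},\mathfrak{v}]=0$ because $\mathfrak{v} \subset C_{\mathfrak{g}}(\mathfrak{h})$.

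Next I would define an inner product $Q$ on $\mathfrak{g} = \mathfrak{h} \oplus \mathfrak{m}$ by $Q|_{\mathfrak{m}} = (\cdot,\cdot)$, $Q|_{\mathfrak{h}} = -B|_{\mathfrak{h}}$ (positive definite, as $B$ is negative definite on $\mathfrak{h}$), and $Q(\mathfrak{h},\mathfrak{m}) = 0$, which is natural in view of the assumed orthogonality $B(\mathfrak{h},\mathfrak{m})=0$ of the chosen decomposition. I then check skew-symmetry of $\ad(X)$ with respect to $Q$ on the two summands of $\mathfrak{k}$. For $X \in \mathfrak{h}$ the operator $\ad(X)$ is block-diagonal, since it preserves both $\mathfrak{h}$ and the $\Ad(H)$-invariant space $\mathfrak{m}$: it is skew on $\mathfrak{h}$ with respect to $-B|_{\mathfrak{h}}$ by $\ad$-invariance of $B$, and skew on $\mathfrak{m}$ with respect to $(\cdot,\cdot)$ by $\Ad(H)$-invariance of the metric, hence skew with respect to $Q$. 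For $X \in \mathfrak{v}$ one has $\ad(X)\mathfrak{h}=0$ and $\ad(X)\mathfrak{m}\subset\mathfrak{m}$, the latter inclusion being part of Lemma \ref{comni3}, so in computing $Q(\ad(X)u,w)+Q(u,\ad(X)w)$ only the $\mathfrak{m}$-components survive and the expression reduces to $(\ad(X)u_{\mathfrak{m}},w_{\mathfrak{m}}) + (u_{\mathfrak{m}},\ad(X)w_{\mathfrak{m}})$, which vanishes because $\ad(X)|_{\mathfrak{m}}$ is skew-symmetric by Lemma \ref{comni3}. By linearity in $X$, every $\ad(X)$ with $X\in\mathfrak{k}$ is $Q$-skew, so $\mathfrak{k}$ is compactly embedded in $\mathfrak{g}$.

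Finally, the remaining two claims are immediate consequences: being compactly embedded, $\mathfrak{k}$ admits an $\ad(\mathfrak{k})$-invariant inner product on $\mathfrak{g}$, so for $Y\in\mathfrak{k}$ the number $B(Y,Y)=\trace(\ad(Y)^2)$ is the trace of a squared skew-symmetric operator and is therefore non-positive, vanishing precisely when $\ad(Y)=0$, i.e. when $Y$ lies in the center of $\mathfrak{g}$ (note $Z(\mathfrak{g})\subset C_{\mathfrak{g}}(\mathfrak{h})\subset\mathfrak{k}$, so the statement is well-posed). I do not expect a genuine obstacle here; the only point requiring care is verifying that the block structure of $\ad(X)$ for $X\in\mathfrak{v}$ kills all cross terms between $\mathfrak{h}$ and $\mathfrak{m}$ in the $Q$-skew-symmetry computation, which is exactly where the orthogonality $Q(\mathfrak{h},\mathfrak{m})=0$ and the inclusion $[\mathfrak{v},\mathfrak{m}]\subset\mathfrak{m}$ from Lemma \ref{comni3} are used.
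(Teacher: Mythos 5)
Your proof is correct and follows essentially the same route as the paper: both extend $(\cdot,\cdot)$ to an $\Ad(H)$-invariant inner product on $\mathfrak{g}$ with $\mathfrak{h}\perp\mathfrak{m}$ (the paper leaves the extension on $\mathfrak{h}$ unspecified, you take $-B|_{\mathfrak{h}}$), deduce $Q$-skew-symmetry of $\ad(X)$ for $X\in\mathfrak{k}$ from Lemma \ref{comni3} together with $[\mathfrak{h},\mathfrak{m}]\subset\mathfrak{m}$, and finish with the trace-of-a-squared-skew-symmetric-operator argument for the sign and vanishing of $B(Y,Y)$. Your write-up is merely more explicit about the block structure and cross-term cancellations that the paper's proof leaves implicit.
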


\begin{proof}
We know that the operator $\ad(Y)|_{\mathfrak{m}}$ is skew-symmetric for all
$Y\in C_{\mathfrak{g}}(\mathfrak{h})$.
By definition, we get also
$[Y,\mathfrak{h}]=0$. If we extend $(\cdot,\cdot)$ to the $\Ad(H)$-invariant product oh $\mathfrak{g}$ with $(\mathfrak{h},\mathfrak{m})=0$,
then $\ad(Y)$ is skew-symmetric. The same we can say about any $Y\in \mathfrak{h}$, hence for all $Y \in \mathfrak{k}$.
This means that $\mathfrak{k}$ is compactly embedded in $\mathfrak{g}$. It is obviously also,
that $B(Y,Y)=\trace(\ad(Y)\ad(Y))=\sum_i ([Y,[Y, E_i]],E_i)=-\sum_i ([Y, E_i],[Y,E_i])\leq 0$, where $\{E_i\}$ is any $(\cdot,\cdot)$-orthonormal base
in $\mathfrak{g}$, with
$B(Y,Y)=0$ if and only if $\ad(Y)=0$.
\end{proof}

\begin{prop}\label{nilpcompli1} Under the above assumptions we get the following:

{\rm 1)} Any $\ad(\mathfrak{h})$-invariant complement $\mathfrak{p}$ to $[\mathfrak{g},\mathfrak{r(g)}]$ in
$\mathfrak{r(g)}$ is  in $C_{\mathfrak{g}}(\mathfrak{h})$;

{\rm 2)} $C_{\mathfrak{g}}(\mathfrak{h})\cap  \mathfrak{n(g)}$ is the center of $\mathfrak{g}$;

{\rm 3)} Any $\ad(\mathfrak{h})$-invariant complement $\mathfrak{q}$ to $[\mathfrak{g},\mathfrak{r(g)}]$ in
$\mathfrak{n(g)}$ is in the center of $\mathfrak{g}$.
\end{prop}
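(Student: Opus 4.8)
The plan is to prove the three assertions in order, exploiting the structural facts already established: $[\mathfrak{g},\mathfrak{r(g)}]\subset\mathfrak{n(g)}$, Lemma \ref{comni2}, and the skew-symmetry of $\ad(Y)|_{\mathfrak{m}}$ for $Y\in C_{\mathfrak{g}}(\mathfrak{h})$ from Lemma \ref{comni3}. The common thread is that $\ad(\mathfrak{h})$-invariant complements inside the radical automatically centralize $\mathfrak{h}$, because the commutator $[\mathfrak{h},\mathfrak{r(g)}]$ lands in $[\mathfrak{g},\mathfrak{r(g)}]$, which is contained both in $\mathfrak{n(g)}$ and in every complement's ``opposite'' piece.

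For assertion 1), let $\mathfrak{p}$ be an $\ad(\mathfrak{h})$-invariant complement to $[\mathfrak{g},\mathfrak{r(g)}]$ in $\mathfrak{r(g)}$. The key is Lemma \ref{comni2} applied to $\mathfrak{q}=\mathfrak{p}$: since $\mathfrak{p}\subset\mathfrak{r(g)}$ is $\ad(\mathfrak{h})$-invariant, we have $[\mathfrak{h},\mathfrak{p}]\subset\mathfrak{p}\cap[\mathfrak{g},\mathfrak{r(g)}]$. But $\mathfrak{p}$ is a complement to $[\mathfrak{g},\mathfrak{r(g)}]$, so $\mathfrak{p}\cap[\mathfrak{g},\mathfrak{r(g)}]=0$, whence $[\mathfrak{h},\mathfrak{p}]=0$, i.e.\ $\mathfrak{p}\subset C_{\mathfrak{g}}(\mathfrak{h})$. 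This is immediate and needs no further computation.

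For assertion 2), I would argue by double inclusion. The center $\mathfrak{z}(\mathfrak{g})$ is obviously contained in $C_{\mathfrak{g}}(\mathfrak{h})$, and it lies in $\mathfrak{n(g)}$ since central elements are ad-nilpotent and normal; so $\mathfrak{z}(\mathfrak{g})\subset C_{\mathfrak{g}}(\mathfrak{h})\cap\mathfrak{n(g)}$. Conversely, take $X\in C_{\mathfrak{g}}(\mathfrak{h})\cap\mathfrak{n(g)}$. By Lemma \ref{comni3} the operator $\ad(X)|_{\mathfrak{m}}$ is skew-symmetric with respect to $(\cdot,\cdot)$; on the other hand $X\in\mathfrak{n(g)}$ forces $\ad(X)$ to be a nilpotent operator on $\mathfrak{g}$, hence $\ad(X)|_{\mathfrak{m}}$ is nilpotent. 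A skew-symmetric nilpotent operator vanishes, so $[X,\mathfrak{m}]=0$; combined with $[X,\mathfrak{h}]=0$ (as $X\in C_{\mathfrak{g}}(\mathfrak{h})$) and $\mathfrak{g}=\mathfrak{h}\oplus\mathfrak{m}$, we get $\ad(X)=0$, i.e.\ $X\in\mathfrak{z}(\mathfrak{g})$. The point I expect to be the crux is the interplay between skew-symmetry (from the GO-property via Lemma \ref{comni3}) and nilpotency (from membership in the nilradical); this is where the geodesic-orbit hypothesis is genuinely used, and it mirrors the reasoning in Example \ref{examplenil} that forces commutativity of $\mathfrak{n(g)}$ in the trivial-isotropy case.

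Finally, assertion 3) follows by combining the first two. Let $\mathfrak{q}$ be an $\ad(\mathfrak{h})$-invariant complement to $[\mathfrak{g},\mathfrak{r(g)}]$ in $\mathfrak{n(g)}$. Exactly as in assertion 1), Lemma \ref{comni2} gives $[\mathfrak{h},\mathfrak{q}]\subset\mathfrak{q}\cap[\mathfrak{g},\mathfrak{r(g)}]=0$, so $\mathfrak{q}\subset C_{\mathfrak{g}}(\mathfrak{h})$; since also $\mathfrak{q}\subset\mathfrak{n(g)}$ by construction, we have $\mathfrak{q}\subset C_{\mathfrak{g}}(\mathfrak{h})\cap\mathfrak{n(g)}$, which is the center of $\mathfrak{g}$ by assertion 2). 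Thus $\mathfrak{q}\subset\mathfrak{z}(\mathfrak{g})$, as claimed. No further obstacle arises here; the work is entirely front-loaded into the skew-symmetric-plus-nilpotent argument of part 2).
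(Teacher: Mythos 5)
Your proposal is correct and takes essentially the same route as the paper's own proof: parts 1) and 3) are exactly the paper's argument (Lemma \ref{comni2} packages the inclusion $[\mathfrak{h},\mathfrak{p}]\subset\mathfrak{p}\cap[\mathfrak{g},\mathfrak{r(g)}]=0$ that the paper writes out directly), and part 2) uses the same combination of skew-symmetry of $\ad(X)|_{\mathfrak{m}}$ from Lemma \ref{comni3} with nilpotency of $\ad(X)$ for $X\in\mathfrak{n(g)}$, a skew-symmetric nilpotent operator being zero. Your write-up merely spells out a few steps the paper leaves implicit; there is no substantive difference.
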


\begin{proof} It is clear that $[\mathfrak{h}, \mathfrak{p}]\subset [\mathfrak{g},\mathfrak{r(g)}]$.
On the other hand, $[\mathfrak{h}, \mathfrak{p}]\subset \mathfrak{p}$,
that proves 1).

Next, take any $Y\in C_{\mathfrak{g}}(\mathfrak{h})\cap  \mathfrak{n(g)}$. By Lemma \ref{comni3} we get that
the operator $\ad(Y)|_{\mathfrak{m}}$ is skew-symmetric, whereas $[Y,\mathfrak{h}]=0$.
On the other hand, $Y\in \mathfrak{n(g)}$ and $\ad(Y)$ should be nilpotent.
Hence, $Y$ is in the center of $\mathfrak{g}$. It is clear also that any central elements of $\mathfrak{g}$
is situated both in $\mathfrak{n(g)}$ and in $C_{\mathfrak{g}}(\mathfrak{h})$, hence, we get 2).

Finally, from $[\mathfrak{h}, \mathfrak{q}]\subset \mathfrak{q}\cap [\mathfrak{g},\mathfrak{r(g)}]$ we get $\mathfrak{q}\subset C_{\mathfrak{g}}(\mathfrak{h})$.
Hence, 2) implies 3).
\end{proof}
\smallskip

\begin{corollary}\label{comni4}
Let $\mathfrak{q}$ be the $(\cdot,\cdot)$-orthogonal complement to $[\mathfrak{r(g)},\mathfrak{g}]\,\bigl(\subset \mathfrak{n(g)}\bigr)$ in
$\mathfrak{r(g)}\cap \mathfrak{m}$, then
$\ad(Y)|_{\mathfrak{m}}$ is skew-symmetric for any $Y\in \mathfrak{q}$, $\mathfrak{q}$ commutes both with $\mathfrak{h}$ and with the orthogonal complement to
$[\mathfrak{r(g)},\mathfrak{g}]$ in $\mathfrak{m}$; in particular, $[\mathfrak{q},\mathfrak{q}]=0$.
\end{corollary}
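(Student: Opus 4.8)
The plan is to reduce everything to the two facts already established for elements commuting with $\mathfrak{h}$, namely Lemma \ref{comni2} and Lemma \ref{comni3}. First I would record the basic inclusions: since $[\mathfrak{r(g)},\mathfrak{g}]=[\mathfrak{g},\mathfrak{r(g)}]\subset \mathfrak{n(g)}$ and $B(\mathfrak{n(g)},\mathfrak{g})=0$ forces $\mathfrak{n(g)}\subset \mathfrak{m}$ by the definition of $\mathfrak{m}$, we get $[\mathfrak{r(g)},\mathfrak{g}]\subset \mathfrak{r(g)}\cap \mathfrak{m}$, so the $(\cdot,\cdot)$-orthogonal complement $\mathfrak{q}$ inside $\mathfrak{r(g)}\cap \mathfrak{m}$ is well defined. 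As $[\mathfrak{r(g)},\mathfrak{g}]$ and $\mathfrak{r(g)}\cap \mathfrak{m}$ are both $\Ad(H)$-invariant and $(\cdot,\cdot)$ is $\Ad(H)$-invariant, $\mathfrak{q}$ is $\Ad(H)$-invariant, hence $\ad(\mathfrak{h})$-invariant; and by construction $\mathfrak{q}\cap [\mathfrak{g},\mathfrak{r(g)}]=0$.

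Next I would apply Lemma \ref{comni2} to the $\ad(\mathfrak{h})$-invariant subspace $\mathfrak{q}\subset \mathfrak{r(g)}$: since $\mathfrak{q}\cap [\mathfrak{g},\mathfrak{r(g)}]=0$, the lemma yields $[\mathfrak{h},\mathfrak{q}]=0$, that is, $\mathfrak{q}\subset C_{\mathfrak{g}}(\mathfrak{h})$, which is exactly the assertion that $\mathfrak{q}$ commutes with $\mathfrak{h}$. The skew-symmetry of $\ad(Y)|_{\mathfrak{m}}$ for $Y\in \mathfrak{q}$ is then immediate from the first assertion of Lemma \ref{comni3}. In passing I would also note, as in the proof of that lemma, that $[Y,\mathfrak{m}]\subset \mathfrak{m}$ for $Y\in C_{\mathfrak{g}}(\mathfrak{h})$, while $[Y,\mathfrak{g}]\subset [\mathfrak{r(g)},\mathfrak{g}]$ since $Y\in \mathfrak{r(g)}$; combining these, $\ad(Y)$ maps $\mathfrak{m}$ into $[\mathfrak{r(g)},\mathfrak{g}]$.

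The remaining and most delicate point is that $\mathfrak{q}$ commutes with $\mathfrak{w}$, the $(\cdot,\cdot)$-orthogonal complement of $[\mathfrak{r(g)},\mathfrak{g}]$ in $\mathfrak{m}$. Fixing $Y\in \mathfrak{q}$ and $W\in \mathfrak{w}$, the containment above gives $[Y,W]\in [\mathfrak{r(g)},\mathfrak{g}]\subset \mathfrak{m}$. To see it vanishes I would show it is $(\cdot,\cdot)$-orthogonal to $[\mathfrak{r(g)},\mathfrak{g}]$: for any $V\in [\mathfrak{r(g)},\mathfrak{g}]$ the skew-symmetry of $\ad(Y)|_{\mathfrak{m}}$ gives $([Y,W],V)=-(W,[Y,V])$, and $[Y,V]\in [\mathfrak{r(g)},\mathfrak{g}]$ (as $Y,V\in \mathfrak{r(g)}$) while $W\perp [\mathfrak{r(g)},\mathfrak{g}]$, so the right-hand side is $0$. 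Taking $V=[Y,W]$ forces $([Y,W],[Y,W])=0$, whence $[Y,W]=0$. Finally, since $\mathfrak{q}\subset \mathfrak{w}$ (being orthogonal to $[\mathfrak{r(g)},\mathfrak{g}]$ and contained in $\mathfrak{m}$), the relation $[\mathfrak{q},\mathfrak{w}]=0$ specializes to $[\mathfrak{q},\mathfrak{q}]=0$. I expect this last orthogonality computation to be the only genuine step; the rest is bookkeeping with the invariant complements together with direct citation of Lemmas \ref{comni2} and \ref{comni3}.
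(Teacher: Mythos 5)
Your proof is correct and takes essentially the same route as the paper: the paper obtains $\mathfrak{q}\subset C_{\mathfrak{g}}(\mathfrak{h})$ by citing Proposition \ref{nilpcompli1} (which itself rests on Lemma \ref{comni2}, the lemma you invoke directly), gets skew-symmetry of $\ad(Y)|_{\mathfrak{m}}$ from Lemma \ref{comni3}, and then concludes commutation with the orthogonal complement of $[\mathfrak{r(g)},\mathfrak{g}]$ exactly by the orthogonality argument you spell out. The only difference is that you make explicit the computation $([Y,W],V)=-(W,[Y,V])=0$ with $V=[Y,W]$, which the paper compresses into its final sentence.
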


\begin{proof}
Indeed, $\mathfrak{q} \subset C_{\mathfrak{g}}(\mathfrak{h})$ by Proposition \ref{nilpcompli1},
$\ad(Y)|_{\mathfrak{m}}$ is skew-symmetric for any $Y\in \mathfrak{q}$
by Lemma~\ref{comni3}.
Since $[\mathfrak{q},\mathfrak{g}]\subset [\mathfrak{r(g)},\mathfrak{g}]$ and
$\ad(Y)|_{\mathfrak{m}}$ is skew-symmetric for any $Y\in \mathfrak{q}$, we see that such $Y$ commutes with any $X\in \mathfrak{m}$ orthogonal to
$[\mathfrak{r(g)},\mathfrak{g}]$.
\end{proof}

\begin{remark}\label{comni5}
Vectors from $\mathfrak{q}$ could act non-trivially only on $[\mathfrak{r(g)},\mathfrak{g}]\subset \mathfrak{n(g)}$,
but in general $\mathfrak{q}$ is non-trivial subspace, see Example \ref{ex1} below.
\end{remark}

\begin{theorem}\label{strucrad1}
Let $(G/H, \rho)$ be a geodesic orbit space. Then {\rm(}in the above notation{\rm)} we have the following assertions.

{\rm 1)} The Killing form $B$ is non-positive on any $\ad(\mathfrak{h})$-invariant complement $\mathfrak{p}$ to $[\mathfrak{g},\mathfrak{r(g)}]$ in
$\mathfrak{r(g)}$, and $B(X,X)=0$, $X\in \mathfrak{p}$, if and only if $X$ is central in $\mathfrak{g}$.

{\rm 2)} The Killing form $B$ is negatively definite on any $\ad(\mathfrak{h})$-invariant complement $\mathfrak{q}$ to $\mathfrak{n(g)}$ in
$\mathfrak{r(g)}$.

{\rm 3)} $\mathfrak{n(g)}=\{X\in \mathfrak{g}\,|\,[B(X,\mathfrak{g})=0\}$ and we have the following
 $(\cdot,\cdot)$-orthogonal sum: $\mathfrak{n(g)}=[\mathfrak{g},\mathfrak{r(g)}]\oplus \mathfrak{l}$, where
$\mathfrak{l}$ is a central subalgebra of $\mathfrak{g}$.
\end{theorem}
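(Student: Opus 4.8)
The plan is to derive all three assertions from the structural results already in place---Proposition \ref{nilpcompli1}, Corollary \ref{cor1}, and Corollary \ref{comni4}---together with the inclusions (\ref{nilid}). I would treat the three parts in order, since each feeds into the next, and the only genuinely new input is the identity in assertion 3).

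For assertion 1), I would invoke Proposition \ref{nilpcompli1}(1): every $\ad(\mathfrak{h})$-invariant complement $\mathfrak{p}$ to $[\mathfrak{g},\mathfrak{r(g)}]$ in $\mathfrak{r(g)}$ lies in $C_{\mathfrak{g}}(\mathfrak{h})$, hence in the compactly embedded subalgebra $\mathfrak{k}=C_{\mathfrak{g}}(\mathfrak{h})\oplus[\mathfrak{h},\mathfrak{h}]$ of Corollary \ref{cor1}. That corollary asserts that $B$ is non-positive on $\mathfrak{k}$ and that $B(X,X)=0$ holds exactly when $X$ is central in $\mathfrak{g}$, which gives 1) verbatim upon restriction to $\mathfrak{p}$.

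For assertion 2), I would first fix an $\ad(\mathfrak{h})$-invariant complement $\mathfrak{l}$ to $[\mathfrak{g},\mathfrak{r(g)}]$ in $\mathfrak{n(g)}$; by Proposition \ref{nilpcompli1}(3) this $\mathfrak{l}$ is central, hence in particular $\ad(\mathfrak{h})$-invariant. Given any $\ad(\mathfrak{h})$-invariant complement $\mathfrak{q}$ to $\mathfrak{n(g)}$ in $\mathfrak{r(g)}$, the sum $\mathfrak{p}:=\mathfrak{l}\oplus\mathfrak{q}$ is then an $\ad(\mathfrak{h})$-invariant complement to $[\mathfrak{g},\mathfrak{r(g)}]$ in $\mathfrak{r(g)}$, so part 1) applies to it. If some $X\in\mathfrak{q}$ satisfied $B(X,X)=0$, then $X$ would be central, hence $X\in\mathfrak{n(g)}$; but $\mathfrak{q}\cap\mathfrak{n(g)}=0$, forcing $X=0$. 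Thus $B$ is negative definite on $\mathfrak{q}$.

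The main obstacle is assertion 3), specifically the identity $\mathfrak{n(g)}=A_0$ where $A_0:=\{X\in\mathfrak{g}\mid B(X,\mathfrak{g})=0\}$, since (\ref{nilid}) gives only $\mathfrak{n(g)}\subset A_0\subset\mathfrak{r(g)}$ and the GO-property must be used to close the gap. My plan is as follows: because $B(A_0,\mathfrak{h})=0$, $B$ is negative definite on $\mathfrak{h}$, and $B(\mathfrak{h},\mathfrak{m})=0$, the space $\mathfrak{m}$ is the $B$-orthogonal complement of $\mathfrak{h}$, so $A_0\subset\mathfrak{r(g)}\cap\mathfrak{m}$. Decomposing $\mathfrak{r(g)}\cap\mathfrak{m}=[\mathfrak{g},\mathfrak{r(g)}]\oplus\mathfrak{q}'$ as in Corollary \ref{comni4}, with $\mathfrak{q}'\subset C_{\mathfrak{g}}(\mathfrak{h})$, I would take $X\in A_0$ and write $X=X_1+X_2$ accordingly; since $[\mathfrak{g},\mathfrak{r(g)}]\subset\mathfrak{n(g)}\subset A_0$ we have $X_1\in A_0$, whence $X_2\in A_0\cap\mathfrak{q}'\subset\mathfrak{k}$ with $B(X_2,X_2)=0$, so $X_2$ is central by Corollary \ref{cor1} and therefore lies in $\mathfrak{n(g)}$. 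Then $X=X_1+X_2\in\mathfrak{n(g)}$, giving $A_0\subset\mathfrak{n(g)}$ and hence equality. Finally, taking $\mathfrak{l}$ to be the $(\cdot,\cdot)$-orthogonal complement of $[\mathfrak{g},\mathfrak{r(g)}]$ in $\mathfrak{n(g)}$---which is $\ad(\mathfrak{h})$-invariant and, by Proposition \ref{nilpcompli1}(3), central---yields the stated orthogonal decomposition $\mathfrak{n(g)}=[\mathfrak{g},\mathfrak{r(g)}]\oplus\mathfrak{l}$ with $\mathfrak{l}$ a central (abelian) subalgebra.
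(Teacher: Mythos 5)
Your proposal is correct and follows essentially the same route as the paper: Proposition \ref{nilpcompli1} together with Corollary \ref{cor1} gives assertion 1), assertion 2) follows because central elements lie in $\mathfrak{n(g)}$, the equality $\mathfrak{n(g)}=\{X\in \mathfrak{g}\,|\,B(X,\mathfrak{g})=0\}$ follows from the definiteness statement applied to a complement of $\mathfrak{n(g)}$ inside $\mathfrak{r(g)}$, and Proposition \ref{nilpcompli1}(3) gives the centrality of $\mathfrak{l}$. Your write-up merely makes explicit some steps the paper leaves implicit (the construction $\mathfrak{p}=\mathfrak{l}\oplus\mathfrak{q}$ in part 2, and the use of the particular complement from Corollary \ref{comni4} in part 3), which are harmless variants of the same argument.
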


\begin{proof}
By 1) of Proposition \ref{nilpcompli1}, any $\ad(\mathfrak{h})$-invariant complement $\mathfrak{p}$ to $[\mathfrak{g},\mathfrak{r(g)}]$ in
$\mathfrak{r(g)}$ is  in $C_{\mathfrak{g}}(\mathfrak{h})$.
By Corollary~\ref{cor1}, $B(X,X)\leq 0$  for $X\in \mathfrak{p}$, whereas $B(X,X)=0$ if and only if $X$ is in the center of
$\mathfrak{g}$. This proves 1).

Since the center is  a subset of the nilradical, $B$ is negatively defined
on any $\ad(\mathfrak{h})$-invariant complement $\mathfrak{q}$ to $\mathfrak{n(g)}$ in
$\mathfrak{r(g)}$, that proves 2).

From this we get also
$\mathfrak{n(g)}=\{X\in \mathfrak{g}\,|\,[B(X,\mathfrak{g})=0\}$, since
$\mathfrak{n(g)}\subset \{X\in \mathfrak{g}\,|\,[B(X,\mathfrak{g})=0\}\subset \mathfrak{r(g)}$.
By 3) of Proposition \ref{nilpcompli1} we get that
$\mathfrak{l}$ is  a central subalgebra of $\mathfrak{g}$.
\end{proof}
\smallskip

It is a good place to recall the structure of the nilradical in $\mathfrak{g}$ for a geodesic orbit space $(G/H, \rho)$.
The following result was obtained  by C.~Gordon
\cite[Theorem 2.2]{Gor96} for nilmanifold and by J.A.~Wolf \cite[Proposition 13.1.9]{W1} in general case.
For the reader's convenience, we add a short proof
of this remarkable result.

\begin{prop}[C.~Gordon~---~J.A.~Wolf]\label{strucnilr}
Let $(G/H, \rho)$ be a geodesic orbit space. Then the nilradical $\mathfrak{n(g)}$ of the Lie algebra $\mathfrak{g}=\operatorname{Lie}(G)$
is commutative or two-step nilpotent.
\end{prop}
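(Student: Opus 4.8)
The plan is to prove that $\mathfrak{n(g)}$ is commutative or two-step nilpotent by establishing that $[\mathfrak{n(g)},[\mathfrak{n(g)},\mathfrak{n(g)}]] = 0$, which is exactly the two-step nilpotency condition. The key structural input is that for any $Y \in \mathfrak{n(g)}$ lying in the centralizer $C_{\mathfrak{g}}(\mathfrak{h})$, the operator $\ad(Y)|_{\mathfrak{m}}$ is skew-symmetric by Lemma \ref{comni3}, while simultaneously $\ad(Y)$ is nilpotent because $Y$ is in the nilradical. A skew-symmetric operator that is also nilpotent must vanish, so such $Y$ is central in $\mathfrak{g}$. This is precisely the phenomenon already exploited in Proposition \ref{nilpcompli1}, assertion 2), and it will be the engine of the argument.

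First I would recall from Theorem \ref{strucrad1}, assertion 3), the $(\cdot,\cdot)$-orthogonal decomposition $\mathfrak{n(g)} = [\mathfrak{g},\mathfrak{r(g)}] \oplus \mathfrak{l}$ with $\mathfrak{l}$ central in $\mathfrak{g}$. Since central elements contribute nothing to any bracket, it suffices to control the bracket structure on $[\mathfrak{g},\mathfrak{r(g)}]$. The natural strategy is to use the GO-property (Lemma \ref{GO-criterion}) directly on elements of $\mathfrak{n(g)}$. For a fixed $X \in \mathfrak{n(g)} \cap \mathfrak{m}$, there is $Z \in \mathfrak{h}$ making the GO-equation $([X+Z,Y]_{\mathfrak{m}},X) = 0$ hold for all $Y \in \mathfrak{m}$. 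The idea is to extract algebraic consequences by applying this with $Y$ ranging over $\mathfrak{n(g)}$ and iterating, comparing the skew-symmetry of the relevant adjoint operators against the nilpotency forced by membership in the nilradical.

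The central computation I would carry out is to show that the self-adjoint operator $S := \ad(X)|_{\mathfrak{m}} + (\ad(X)|_{\mathfrak{m}})^{*}$ (where $*$ is the $(\cdot,\cdot)$-adjoint) is controlled by the isotropy contribution $Z$, and then to restrict attention to the nilpotent part. Concretely, for $X \in \mathfrak{n(g)}$ one analyses $\ad(X)^2$ and $\ad(X)^3$ restricted to $\mathfrak{n(g)}$: nilpotency gives vanishing of high powers of $\ad(X)$, and the GO-property controls the symmetric part of low powers. The aim is to deduce that $\ad(X)^2|_{\mathfrak{n(g)}}$ annihilates $[\mathfrak{n(g)},\mathfrak{n(g)}]$, equivalently that $[X,[X,W]] = 0$ for all $W \in \mathfrak{n(g)}$, which by polarization over $X$ yields the two-step identity $[\mathfrak{n(g)},[\mathfrak{n(g)},\mathfrak{n(g)}]] = 0$.

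The hard part will be making the polarization rigorous and handling the isotropy term $Z \in \mathfrak{h}$ that appears in the GO-equation: $Z$ depends on $X$ and may act nontrivially, so one cannot naively discard it. The cleanest route is to work with the $\Ad(H)$-invariant complement satisfying $B(\mathfrak{h},\mathfrak{m})=0$ that was fixed at the start of Section 4, in which $\mathfrak{n(g)} \subset \mathfrak{m}$ by \eqref{nilid}; then one must verify that the $\mathfrak{m}$-projections interact well with the nilpotent filtration of $\mathfrak{n(g)}$. I expect the decisive step to be an eigenvalue or trace argument showing that the restriction of $\ad(X)$ to $[\mathfrak{n(g)},\mathfrak{n(g)}]$ both is skew-symmetric (from GO combined with the centralizer structure of Lemma \ref{comni3}) and is nilpotent, forcing it to vanish; organizing the interplay between the bracket with $Z$ and the skew-symmetry so that no cross terms survive is where the real care is needed.
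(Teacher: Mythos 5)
You have correctly identified the target identity $[\mathfrak{n},[\mathfrak{n},\mathfrak{n}]]=0$ (writing $\mathfrak{n}=\mathfrak{n(g)}$), the right reductive complement (with $\mathfrak{n}\subset\mathfrak{m}$), and the right mechanism (a skew-symmetric nilpotent operator vanishes). But there is a genuine gap, and it sits exactly where you say ``the real care is needed'': you never actually dispose of the isotropy term $Z$. The paper kills it with one simple structural move that your proposal does not contain: split $\mathfrak{n}$ $(\cdot,\cdot)$-orthogonally as $\mathfrak{n}=\mathfrak{a}\oplus[\mathfrak{n},\mathfrak{n}]$, where $\mathfrak{a}$ is the orthogonal complement of $[\mathfrak{n},\mathfrak{n}]$ inside $\mathfrak{n}$. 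Both summands are $\ad(\mathfrak{h})$-invariant ($\mathfrak{n}$ and $[\mathfrak{n},\mathfrak{n}]$ are characteristic ideals of $\mathfrak{g}$, and $(\cdot,\cdot)$ is $\ad(\mathfrak{h})$-invariant), so whenever the GO-equation of Lemma \ref{GO-criterion} is tested on a pair of vectors lying in the two different summands, the bracket with $Z$ stays inside one summand while being paired against a vector of the other, hence contributes zero. Concretely, for $Y\in[\mathfrak{n},\mathfrak{n}]$ take $Z\in\mathfrak{h}$ with $([Y+Z,X]_{\mathfrak{m}},Y)=0$ for all $X\in\mathfrak{m}$; for $X\in\mathfrak{a}$ one has $[Z,X]\in\mathfrak{a}\perp Y$, so $([X,Y],Y)=0$, and polarization in $Y$ shows $\ad(X)|_{[\mathfrak{n},\mathfrak{n}]}$ is skew-symmetric (it preserves $[\mathfrak{n},\mathfrak{n}]$, an ideal of $\mathfrak{g}$). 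Being also nilpotent by Engel's theorem, it vanishes: $[\mathfrak{a},[\mathfrak{n},\mathfrak{n}]]=0$. No analysis of $\ad(X)+\ad(X)^{*}$, of $\ad(X)^{2}$ or $\ad(X)^{3}$, and no trace or eigenvalue argument is needed.

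The second missing piece is the endgame. You propose to prove $[X,[X,W]]=0$ for all $X,W\in\mathfrak{n}$ and then polarize, but the skew-symmetry mechanism is genuinely unavailable when $X\in[\mathfrak{n},\mathfrak{n}]$: there the cross term $([Z,\cdot\,],X)$ pairs two vectors of the same summand and has no reason to vanish, which is precisely the obstruction you flagged without resolving. (Also, ``$\ad(X)^{2}$ annihilates $[\mathfrak{n},\mathfrak{n}]$'' and ``$[X,[X,W]]=0$ for all $W\in\mathfrak{n}$'' are not equivalent statements; the quantifier on $W$ differs.) The paper avoids all of this: having $[\mathfrak{a},[\mathfrak{n},\mathfrak{n}]]=0$, the Jacobi identity shows that $[\mathfrak{a},\mathfrak{a}]$, $[\mathfrak{a},[\mathfrak{a},\mathfrak{a}]]$, and all iterated brackets of $\mathfrak{a}$ still commute with $[\mathfrak{n},\mathfrak{n}]$, and $\mathfrak{a}$ generates $\mathfrak{n}$ because a complement to the derived algebra of a nilpotent Lie algebra generates it; hence $[\mathfrak{n},[\mathfrak{n},\mathfrak{n}]]=0$. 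This generation-plus-Jacobi step is what replaces your polarization, and it is absent from your proposal. Finally, note that the inputs you lean on (Lemma \ref{comni3}, Proposition \ref{nilpcompli1}, Theorem \ref{strucrad1}) only concern vectors of $\mathfrak{n}$ that centralize $\mathfrak{h}$, or the central summand $\mathfrak{l}$; they say nothing about brackets of generic elements of $\mathfrak{n}$ and cannot substitute for the orthogonality argument above.
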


\begin{proof}
Suppose that $\mathfrak{n(g)}$ is not commutative, i.~e. $[\mathfrak{n(g)}, \mathfrak{n(g)}]\neq 0$.
Let $\mathfrak{a}$ be the orthogonal complement to $[\mathfrak{n(g)}, \mathfrak{n(g)}]$ in $\mathfrak{n(g)}\subset \mathfrak{m}.$
It is easy to see that for any $X\in \mathfrak{a}$ the operator $\ad(X)|_{[\mathfrak{n(g)}, \mathfrak{n(g)}]}$ is skew-symmetric.
Indeed, by Lemma \ref{GO-criterion}, there is
$Z\in \mathfrak{h}$ such that $0=([X+Z,Y],X)=([X,Y],X)+([Z,Y],X)=([X,Y],X)$ for any
$Y\in  [\mathfrak{n(g)}, \mathfrak{n(g)}]$, since $[Z,Y]\in [\mathfrak{n(g)}, \mathfrak{n(g)}] \perp \mathfrak{a}$.
On  the other hand, $\ad(X)|_{[\mathfrak{n(g)}, \mathfrak{n(g)}]}$ is nilpotent by the Engel theorem. Therefore,
$[X,Y]=0$ for any $X\in \mathfrak{a}$ and any $Y\in  [\mathfrak{n(g)}, \mathfrak{n(g)}]$.
It is easy to see that $[\mathfrak{a},\mathfrak{a}],
[\mathfrak{a},[\mathfrak{a},\mathfrak{a}]],\dots, [\mathfrak{a},[\dots ,[\mathfrak{a},[\mathfrak{a},\mathfrak{a}]]\dots]], \dots$
also commute with
$[\mathfrak{n(g)}, \mathfrak{n(g)}]$.  But the subspace $\mathfrak{a}$ generates $\mathfrak{n(g)}$, since $\mathfrak{n(g)}$ is nilpotent
(see e.~g. \cite{Bourb}, exercise~4 to Section~4 of Chapter~I).
Hence, $[\mathfrak{n(g)},[\mathfrak{n(g)}, \mathfrak{n(g)}]]=0$ and $\mathfrak{n(g)}$ is two-step nilpotent.
\end{proof}

\begin{remark} From Theorem \ref{strucrad1} and Proposition \ref{strucnilr} we see that
$[\mathfrak{g},\mathfrak{r(g)}]$ is either commutative or two-step nilpotent Lie algebra for any geodesic orbit space.
\end{remark}
\medskip

Theorem 1.15 in  \cite{Gor96} claims that $N(G)\cdot S$ acts transitively on a geodesic orbit Riemannian space $(G/H, \rho)$,
where $N(G)$ is the largest connected nilpotent normal subgroup and $S$ is any semisimple Levi factor in $G$,
that equivalent to $\mathfrak{n(g)}+\mathfrak{s}+\mathfrak{h}=\mathfrak{g}$.
Examples \ref{ex1} and~\ref{ex2} below refute this assertion
(nevertheless, it is valid for the case of naturally reductive metrics, see Theorem 3.1 in~\cite{Gor85}).
In particular, we get $\mathfrak{n(g)}\neq \mathfrak{r(g)}\cap \mathfrak{m}$ in these examples.
Proposition \ref{gonil2} shows the way to construct examples of such kind.

\begin{example}\label{ex1} Let us consider the standard action of the Lie algebra $u(n)$ on $\mathbb{R}^{2n}$ (by skew-symmetric matrices), $n \geq 2$.
We will use the notation $A(X)$ for the action of  $A\in u(n)$ on $X\in \mathbb{R}^{2n}$.
Note that for any $X\in \mathbb{R}^{2n}$ there is a non-zero $A\in u(n)$ such that $A(X)=0$ and $A$ is not from  $su(n)$
(because the standard action of $U(n)$ on $S^{2n-1}$ has the isotropy group $U(n-1)\not\subset SU(n)$).

Now, consider $\mathfrak{r}=\mathbb{R}^{2n+1}=\mathbb{R}^{2n} \rtimes \mathfrak{z}$, a semidirect sum
of Lie algebras, where $\mathfrak{z}$ acts on $\mathbb{R}^{2n}$ as the center
of $u(n)$ on $\mathbb{R}^{2n}$. This Lie algebra is solvable (but it is non nilpotent!) with the abelian nilradical $\mathbb{R}^{2n}$.
Moreover, we have also a natural action of $su(n)$ on $\mathfrak{r}=\mathbb{R}^{2n+1}$:
$[Z,X+Y]=Z(X)$ for every $Z\in su(n)$, $X\in \mathbb{R}^{2n}$, $Y\in \mathfrak{z}$.
Hence, we get semidirect sum  $\mathfrak{g}=\mathfrak{r} \rtimes su(n)$ of Lie algebras.
Note also that $\mathfrak{r}$ could be defined as the radical of the Lie algebra
$\mathbb{R}^{2n} \rtimes u(n)=\mathfrak{r} \rtimes su(n)$.

Supply $\mathfrak{r}=\mathbb{R}^{2n+1}$ with the standard Euclidean inner product.
Let us prove that $G/H$ with the corresponding invariant Riemannian metrics, where $H=SU(n)$, is geodesic orbit Riemannian space.

It suffices to prove that for any $X\in  \mathbb{R}^{2n}$ and for any $Y\in \mathfrak{z}=\mathbb{R}$ there is $Z\in su(n)$ such that
$([X+Y+Z, X_1+Y_1],X+Y)=([X,Y_1]+[Y+Z,X_1],X)=0$ for every $X_1\in  \mathbb{R}^{2n}$ and $Y_1\in \mathfrak{z}=\mathbb{R}$.
Since $su(n)$ and $\mathfrak{z}=\mathbb{R}$ act on $\mathbb{R}^{2n}$ by skew-symmetric operators, this equivalent to $[Y+Z,X]=0$.

If $Y=0$, then we can take $Z=0$. Now, suppose that $Y\neq 0$.
We know that  there a non-zero $A\in u(n)$ such that $A(X)=0$ and $A\not\in su(n)$ for the standard action of $u(n)$ on $\mathbb{R}^{2n}$.
Let $A=Y_2+Z$, where $Y_2 \in  \mathfrak{z}=\mathbb{R}$ and $Z\in su(n)$.
Since $Y_2\neq 0$ we may assume (without loss of generality) that $Y_2=Y$ by multiplying $A$ by a suitable constant. Hence, we have found a desirable $Z\in su(n)$.

Hence, we get an example of  geodesic orbit Riemannian  space $(G/H, \mu)$, where $\mathfrak{n(g)}\neq \mathfrak{r(g)}\cap \mathfrak{m}$.
\end{example}

\begin{remark}
Note that the constructed GO-space is simply Euclidean space $\mathbb{R}^{2n+1}$. See also the case~4) of Theorem 4.4 in \cite{KV}.
\end{remark}
\medskip

Now we consider another construction of geodesic orbit spaces.
Let $\mathfrak{n}$ be a two-step nilpotent Lie algebra with an inner product $(\cdot,\cdot)$.
Denote by $\mathfrak{z}$ and $\mathfrak{a}$ the Lie subalgebra $[\mathfrak{n},\mathfrak{n}]$ and the $(\cdot,\cdot)$-orthogonal complement to it in $\mathfrak{n}$.
It is clear that $\mathfrak{z}$ is central in $\mathfrak{n}$. For any $Z\in \mathfrak{z}$ we consider the operator $J_Z:\mathfrak{a} \rightarrow \mathfrak{a}$
defined by the formula $(J_Z(X),Y)=([X,Y],Z)$.
It is clear that $J:\mathfrak{z} \rightarrow so(\mathfrak{a})$ is an injective map.

The Lie algebra of the isometry group of the corresponding Lie group $N$ with the corresponding left-invariant Riemannian metric $\mu$ is
a semidirect sum of $\mathfrak{n}$ with $D(\mathfrak{n})$, the algebra of skew-symmetric derivations of $\mathfrak{n}$, see e.~g. Theorem 4.2 in \cite{Wolf1962}
or \cite{Wil}. Recall the following important result.

\begin{prop}[C.~Gordon \cite{Gor96}]\label{gonil1}
$(N,\mu)$ is geodesic orbit Riemannian manifold if and only if for any $X\in \mathfrak{z}$ and $Y\in \mathfrak{a}$ there is $D\in D(\mathfrak{n})$ such that
$[D,X]=D(X)=0$, $[D,Y]=D(Y)=J_X(Y)$.
\end{prop}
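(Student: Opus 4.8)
The plan is to apply the GO-criterion of Lemma~\ref{GO-criterion} to the natural presentation of $(N,\mu)$ as a homogeneous space. First I would fix the reductive decomposition coming from the isometry algebra $\mathfrak{g}=\mathfrak{n}\rtimes D(\mathfrak{n})$: the isotropy subalgebra is $\mathfrak{h}=D(\mathfrak{n})$ (the skew-symmetric derivations fix the identity $e\in N$), the reductive complement is $\mathfrak{m}=\mathfrak{n}$ (identified with $T_eN$), and the inner product on $\mathfrak{m}$ is the given $(\cdot,\cdot)$. This decomposition is $\Ad(H)$-invariant since every $D\in D(\mathfrak{n})$ preserves $\mathfrak{n}$, and it is the full connected isometry group by the cited structure theorem, so that ``geodesic orbit manifold'' and ``geodesic orbit space with respect to $\mathfrak{g}$'' coincide here. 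Recalling that in the semidirect sum $[D,V]=D(V)$ for $D\in D(\mathfrak{n})$, $V\in\mathfrak{n}$, and that $[W,V]\in\mathfrak{n}=\mathfrak{m}$ for $W,V\in\mathfrak{n}$, Lemma~\ref{GO-criterion} says that $(N,\mu)$ is geodesic orbit if and only if for every $W\in\mathfrak{n}$ there is $D\in D(\mathfrak{n})$ with $([W,V]+D(V),\,W)=0$ for all $V\in\mathfrak{n}$.

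Next I would carry out the computation for $W=X+Y$ with $X\in\mathfrak{z}$, $Y\in\mathfrak{a}$; as $W$ ranges over $\mathfrak{n}$ the orthogonal components $X,Y$ range independently over $\mathfrak{z}$ and $\mathfrak{a}$. Since $\mathfrak{z}=[\mathfrak{n},\mathfrak{n}]$ is central, $[W,V]=[Y,V]\in\mathfrak{z}$, and because $[Y,V]\perp\mathfrak{a}\ni Y$ we get $([W,V],W)=([Y,V],X)=(J_X(Y),V)$ directly from the definition of $J$. Using skew-symmetry of $D$ to write $(D(V),W)=-(V,D(X)+D(Y))$, the condition collapses to $(J_X(Y)-D(X)-D(Y),\,V)=0$ for all $V\in\mathfrak{n}$, that is, to the single equation $D(X)+D(Y)=J_X(Y)$.

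The key step, and the one producing the precise form in the statement, is to split this single equation according to $\mathfrak{n}=\mathfrak{z}\oplus\mathfrak{a}$. Because $\mathfrak{z}=[\mathfrak{n},\mathfrak{n}]$ is a characteristic ideal, every $D\in D(\mathfrak{n})$ satisfies $D(\mathfrak{z})\subset\mathfrak{z}$; combined with skew-symmetry this forces $D(\mathfrak{a})\subset\mathfrak{a}$ as well, since for $Y\in\mathfrak{a}$, $Z\in\mathfrak{z}$ one has $(D(Y),Z)=-(Y,D(Z))=0$. Hence $D(X)\in\mathfrak{z}$, $D(Y)\in\mathfrak{a}$, while $J_X(Y)\in\mathfrak{a}$ by construction, so the $\mathfrak{z}$- and $\mathfrak{a}$-components of $D(X)+D(Y)=J_X(Y)$ read $D(X)=0$ and $D(Y)=J_X(Y)$ respectively. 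This shows the equation $D(X)+D(Y)=J_X(Y)$ is equivalent to the pair $D(X)=0$, $D(Y)=J_X(Y)$, which is exactly the stated condition. I would flag the preservation of the orthogonal splitting by skew-symmetric derivations as the main obstacle: once it is in place the separation of the two conditions is automatic and requires no clever choice of $D$, and the converse direction (the stated condition trivially supplies a $D$ verifying the GO-criterion, as $D(X)+D(Y)=0+J_X(Y)$) then needs no extra work.
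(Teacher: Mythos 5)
Your proof is correct. Note, however, that the paper itself contains no proof of Proposition~\ref{gonil1}: it is recalled as a known result of C.~Gordon \cite{Gor96}, so there is no internal argument to compare against; your write-up actually supplies the missing details. The route you take is the natural (and, up to presentation, Gordon's) one: realize the isometry algebra as $\mathfrak{g}=\mathfrak{n}\rtimes D(\mathfrak{n})$ with isotropy $\mathfrak{h}=D(\mathfrak{n})$ and reductive complement $\mathfrak{m}=\mathfrak{n}$, apply Lemma~\ref{GO-criterion}, use centrality of $\mathfrak{z}$ and the definition of $J_X$ to reduce the criterion for $W=X+Y$ to the single equation $D(X)+D(Y)=J_X(Y)$, and then split it along $\mathfrak{n}=\mathfrak{z}\oplus\mathfrak{a}$ using that any skew-symmetric derivation preserves both $\mathfrak{z}=[\mathfrak{n},\mathfrak{n}]$ and its orthogonal complement $\mathfrak{a}$. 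Each of these steps is justified correctly in your text (in particular the key observation $D(\mathfrak{z})\subset\mathfrak{z}$, hence $D(\mathfrak{a})\subset\mathfrak{a}$ by skew-symmetry, which makes the separation into $D(X)=0$ and $D(Y)=J_X(Y)$ automatic). Your computation is also fully consistent with the way the paper manipulates these objects in its proof of Proposition~\ref{gonil2}, where the identity $([Y,U_1],X)=(J_X(Y),U_1)$ and the skew-symmetry of elements of $D(\mathfrak{n})$ are used in exactly the same manner.
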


Now, suppose that $D(\mathfrak{n})=\mathfrak{c}\oplus \mathfrak{d}$, where $\mathfrak{c}$ is one-dimensional central ideal in $D(\mathfrak{n})$.
Hence we can define a solvable Lie algebra $\mathfrak{sol}=\mathfrak{n}\oplus \mathfrak{c}$, that is semidirect sum ($[U,X]=U(X)$ for $X\in \mathfrak{n}$ and
$U\in \mathfrak{c}$). We can extend $(\cdot,\cdot)$ to $\mathfrak{sol}$ assuming $(\mathfrak{n},\mathfrak{c})=0$ and choosing any inner product on $\mathfrak{c}$,
Since $[\mathfrak{c},\mathfrak{d}]=0$, then $ \mathfrak{d}$ is an algebra of skew-symmetric derivations of $\mathfrak{sol}$.

\begin{prop}\label{gonil2}
In the above notation and assumptions, suppose that

{\rm 1)} for every $X\in \mathfrak{z}$ and every $Y\in \mathfrak{a}$, there is $D_1\in \mathfrak{d}\subset D(\mathfrak{n})$ such that
$[D_1,X]=D_1(X)=0$, $[D_1,Y]=D_1(Y)=J_X(Y)$;

{\rm 2)} the set $\{D\in D(\mathfrak{n})\,|\,D(X)=D(Y)=0\}$ does not lie in~$\mathfrak{d}$ for every $X\in \mathfrak{z}$ and every $Y\in \mathfrak{a}$.

Then the Lie group $Sol$ with a left-invariant Riemannian metric, corresponding to the metric Lie algebra $(\mathfrak{sol},(\cdot,\cdot))$
is a geodesic orbit Riemannian manifold.
\end{prop}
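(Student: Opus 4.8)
The plan is to realize $(Sol,\mu)$ as a geodesic orbit space $G/H$ for a suitable transitive isometry group and then to verify the criterion of Lemma \ref{GO-criterion}; since the resulting $G$ is a subgroup of the full isometry group, the geodesic orbit property with respect to $G$ immediately gives, via Definitions \ref{GOman} and \ref{GOsp}, that $(Sol,\mu)$ is a geodesic orbit Riemannian manifold. Write $\mathfrak{c}=\mathbb{R}U_0$ and extend every $Z\in\mathfrak{d}$ to $\mathfrak{sol}=\mathfrak{n}\oplus\mathfrak{c}$ by $Z(U_0)=0$. Because $[\mathfrak{c},\mathfrak{d}]=0$, i.e. $[Z,U_0]=0$ as operators on $\mathfrak{n}$, each such $Z$ is a derivation of $\mathfrak{sol}$, and it is skew-symmetric for the extended inner product; moreover $\mathfrak{d}$ acts faithfully by skew-symmetric operators on $\mathfrak{sol}$, hence is a compact subalgebra, so in $\mathfrak{g}:=\mathfrak{sol}\rtimes\mathfrak{d}$ it is compactly embedded and the corresponding isotropy group $H$ is compact. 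Thus I take $\mathfrak{h}=\mathfrak{d}$ and the reductive complement $\mathfrak{m}=\mathfrak{sol}=\mathfrak{a}\oplus\mathfrak{z}\oplus\mathfrak{c}$ (an ideal of $\mathfrak{g}$, so $[\cdot,\cdot]_{\mathfrak{m}}=[\cdot,\cdot]$ on it), and it remains to check the GO-property for every $V\in\mathfrak{m}$.

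The core step is a direct computation of $[V+Z,W]$. Writing $V=Y+X+tU_0$ and $W=Y'+X'+t'U_0$ with $Y,Y'\in\mathfrak{a}$, $X,X'\in\mathfrak{z}$, $t,t'\in\mathbb{R}$, and using that every skew-symmetric derivation preserves $\mathfrak{z}=[\mathfrak{n},\mathfrak{n}]$ and hence (being skew-symmetric) also $\mathfrak{a}=\mathfrak{z}^{\perp}$, one finds that $[V+Z,W]$ has vanishing $\mathfrak{c}$-component, while its $\mathfrak{a}$- and $\mathfrak{z}$-components are linear in $Y',X',t'$. Pairing with $V$, the terms proportional to $t'$ drop out by skew-symmetry (since $(U_0(Y),Y)=(U_0(X),X)=0$), and the identity $([Y,Y'],X)=(J_X(Y),Y')$ turns the requirement $([V+Z,W],V)=0$ for all $W$ into the two conditions
\[
Z(X)=-t\,U_0(X),\qquad Z(Y)=J_X(Y)-t\,U_0(Y),
\]
equivalently $(Z+tU_0)(X)=0$ and $(Z+tU_0)(Y)=J_X(Y)$. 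So the GO-property for $(Sol,\mu)$ is equivalent to the existence, for all $X\in\mathfrak{z}$, $Y\in\mathfrak{a}$ and $t\in\mathbb{R}$, of $Z\in\mathfrak{d}$ for which $D:=Z+tU_0\in D(\mathfrak{n})$ solves Gordon's equations $D(X)=0$, $D(Y)=J_X(Y)$ while having $\mathfrak{c}$-component exactly $t\,U_0$.

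Finally I assemble such a $D$ from the hypotheses. Condition 1) provides $D_1\in\mathfrak{d}$ with $D_1(X)=0$, $D_1(Y)=J_X(Y)$ (in particular $(N,\mu)$ is already geodesic orbit by Proposition \ref{gonil1}), so $D_1$ solves Gordon's equations but has zero $\mathfrak{c}$-component. Condition 2) provides $D_2\in D(\mathfrak{n})$ with $D_2(X)=D_2(Y)=0$ and nonzero $\mathfrak{c}$-component $c\,U_0$, $c\neq0$. Then $D:=D_1+\tfrac{t}{c}\,D_2$ still satisfies $D(X)=0$ and $D(Y)=J_X(Y)$, while its $\mathfrak{c}$-component equals $t\,U_0$; hence $Z:=D-tU_0\in\mathfrak{d}$ is the desired element, which establishes the GO-property and the proposition. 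I expect the main obstacle to be precisely the reduction in the second paragraph: one must recognize that the extra tangent direction $U_0$ forces the $\mathfrak{c}$-component of the solving derivation to be prescribed, and that condition 2) is exactly the freedom needed to correct this component while staying inside $\mathfrak{d}$, so that conditions 1) and 2) — otherwise independent — are used in tandem.
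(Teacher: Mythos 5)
Your proof is correct and follows essentially the same route as the paper: reduce the GO-criterion for $\mathfrak{g}=\mathfrak{sol}\rtimes\mathfrak{d}$, $\mathfrak{h}=\mathfrak{d}$, $\mathfrak{m}=\mathfrak{sol}$ to finding $D\in D(\mathfrak{n})$ solving Gordon's equations $D(X)=0$, $D(Y)=J_X(Y)$ with prescribed $\mathfrak{c}$-component, then build it from condition 1) plus a rescaled element of $\{D\,|\,D(X)=D(Y)=0\}\setminus\mathfrak{d}$ supplied by condition 2). Your $D=D_1+\tfrac{t}{c}D_2$ is exactly the paper's $D=D_1+V$ after the same one-dimensionality scaling argument.
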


\begin{proof} Since
$\mathfrak{d}$ is an algebra of skew-symmetric derivations of $\mathfrak{sol}$, it suffices to prove the following:
for any $X\in \mathfrak{z}$, $Y\in \mathfrak{a}$, and $Z \in \mathfrak{c}$ there is $D\in \mathfrak{d}$ such that
$$
([X+Y+Z+D,U],X+Y+Z)=0 \quad \mbox{ for any } \quad U\in \mathfrak{sol}.
$$
Since $X$ is central in $\mathfrak{n}$, $[\mathfrak{c},\mathfrak{c}]=0$,
$[\mathfrak{c},\mathfrak{d}]=0$, and $[\mathfrak{sol},\mathfrak{sol}]\subset \mathfrak{n}$, it is
equivalent to the following:
$$
0=([Y+Z+D,U],X+Y)=([Y+Z+D,U_1],X+Y)+([X+Y,U_2],X+Y),
$$
where $U_1\in \mathfrak{n}$, $U_2\in \mathfrak{c}$, $U=U_1+U_2$.
Note that $([X+Y,U_2],X+Y)=0$ because $U_2\in  \mathfrak{c}\subset D(\mathfrak{n})$ is skew-symmetric on $\mathfrak{n}$.
On the other hand, $([Y+Z+D,U_1],X+Y)=([Y,U_1],X)+([Z+D,U_1],X+Y)=(J_X(Y), U_1)-([Z+D,X+Y],U_1)$
by the definition of $J_X$, and we get the equivalent equation
$0=(J_X(Y), U_1)-([Z+D,X+Y],U_1)$. Since $U$ is arbitrary, it suffices to prove that
for any $X\in \mathfrak{z}$, $Y\in \mathfrak{a}$, and $Z \in \mathfrak{c}$ there is $D\in \mathfrak{d}$ such that $[Z+D,X+Y]=J_X(Y)$.

From assumption 1) we know that there is $D_1\in \mathfrak{d}$ such that
$[D_1,X]=D_1(X)=0$ and $[D_1,Y]=D_1(Y)=J_X(Y)$. Let consider the set $C(X+Y):=\{D\in D(\mathfrak{n})\,|\,D(X)=D(Y)=0\}$.
This set is not trivial and is not in $\mathfrak{d}$ by assumption 2).

Now it is clear that for any $W\in \mathfrak{c}$ there is $V\in \mathfrak{d}$ such that $V+W \in C(X+Y)$.
Indeed, such $V$ does exists at least for one nontrivial $W$ by assumption 2). Now, it suffices to use multiplication by a constant, because
$\mathfrak{c}$ is 1-dimensional.

For a given $Z \in \mathfrak{c}$ we can choose
$V\in \mathfrak{d}$ such that $Z+V \in C(X+Y)$. Now, let consider $D=V+D_1$. Since $D_1\in \mathfrak{d}$, then $D \in \mathfrak{d}$  and
$[Z+D,X+Y]=J_X(Y)$.
\end{proof}

\begin{example}\label{ex2}
Consider the 13-dimensional metric Lie algebra of Heisenberg type $\mathfrak{n}$, see details in \cite{DuKo2003}.
In this case $\dim (\mathfrak{z})=5$, $\dim (\mathfrak{a})=8$, $D(\mathfrak{n})=so(5)\oplus \mathbb{R}$.
It it known that for any $X\in \mathfrak{z}$ and $Y\in \mathfrak{a}$ there is (a unique) $D_1\in so(5)$ such that
$[D_1,X]=D_1(X)=0$ and $[D_1,Y]=D_1(Y)=J_X(Y)$, see p.~92 in \cite{DuKo2003}.
In fact this is proved earlier in \cite{Riehm}. In \cite{DuKo2003}, a suitable $D_1$ is determined by a (unique) solution $d$ of
a linear system $Bd=b$, where $B$ is $(10\times 10)$-matrix with elements that depend linearly on $X$ and $Y$.
It is easy to see also that
the set $\{D\in D(\mathfrak{n})\,|\,D(X)=D(Y)=0\}$ is not situated in $so(5)$ for any $X\in \mathfrak{z}$ and any $Y\in \mathfrak{a}$, because
any nontrivial $D\in so(5)$, such that $D(X)=D(Y)=0$, is determined by a non-trivial solution $d$ of the homogeneous linear system $Bd=0$,
where $B$ is non-degenerate, that is impossible,
see details in \cite{DuKo2003}.
Therefore, according to Proposition \ref{gonil2}, we get a 14-dimensional solvmanifold that is a geodesic orbit Riemannian space.
\end{example}
\medskip

In the last part of this section we will show that every geodesic orbit Riemannian space $(G/H, \rho)$
naturally generates another geodesic orbit Riemannian space $(\widetilde{G}/\widetilde{K},\widetilde{\rho})$, that
has a remarkable property: the group
$N\bigl(\widetilde{G}\bigr)\cdot \widetilde{S}$ acts transitively on $(\widetilde{G}/\widetilde{K},\widetilde{\rho})$,
where $N\bigl(\widetilde{G}\bigr)$ is the the largest connected nilpotent normal subgroup and $\widetilde{S}$ is any semisimple Levi factor in $\widetilde{G}$.

\begin{prop}\label{nilpcompli1n} For any geodesic orbit space $(G/H, \rho)$ we have the decomposition
$\mathfrak{m}=(C_{\mathfrak{g}}(\mathfrak{h}) \cap \mathfrak{m}) \oplus [\mathfrak{h}, \mathfrak{m}]$,
which is both $B$-orthogonal and $(\cdot,\cdot)$-orthogonal. Moreover, $C_{\mathfrak{g}}(\mathfrak{h})$ is $B$-orthogonal to
$[\mathfrak{h}, \mathfrak{g}]=[\mathfrak{h}, \mathfrak{h}]\oplus[\mathfrak{h}, \mathfrak{m}]$.
\end{prop}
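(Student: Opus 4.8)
The plan is to derive both assertions from two ingredients already available in this section: the skew-symmetry of $\ad(Y)|_{\mathfrak{m}}$ for $Y\in\mathfrak{h}$ (noted in the proof of Lemma \ref{comni3}) together with the nondegeneracy of $(\cdot,\cdot)$, and the invariance identity $B([X,Y],Z)=-B(Y,[X,Z])$ of the Killing form combined with the standing assumption $B(\mathfrak{h},\mathfrak{m})=0$ fixed at the start of this section. Write $\mathfrak{m}^{\mathfrak{h}}:=\{X\in\mathfrak{m}\,|\,[\mathfrak{h},X]=0\}=C_{\mathfrak{g}}(\mathfrak{h})\cap\mathfrak{m}$; since the decomposition $\mathfrak{g}=\mathfrak{h}\oplus\mathfrak{m}$ is reductive we also have $[\mathfrak{h},\mathfrak{m}]\subset\mathfrak{m}$.

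First I would establish the $(\cdot,\cdot)$-orthogonal splitting. For $X\in\mathfrak{m}^{\mathfrak{h}}$, $H\in\mathfrak{h}$, $Y\in\mathfrak{m}$ the skew-symmetry of $\ad(H)|_{\mathfrak{m}}$ gives $([H,Y],X)=-(Y,[H,X])=0$, so $\mathfrak{m}^{\mathfrak{h}}$ is $(\cdot,\cdot)$-orthogonal to $[\mathfrak{h},\mathfrak{m}]$. Conversely, if $X\in[\mathfrak{h},\mathfrak{m}]^{\perp}$, then for all $H\in\mathfrak{h}$ and $Y\in\mathfrak{m}$ we get $0=(X,[H,Y])=-([H,X],Y)$, whence $[H,X]=0$ for every $H$ and $X\in\mathfrak{m}^{\mathfrak{h}}$. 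Thus $[\mathfrak{h},\mathfrak{m}]^{\perp}=\mathfrak{m}^{\mathfrak{h}}$, and nondegeneracy (positive definiteness) of $(\cdot,\cdot)$ yields the $(\cdot,\cdot)$-orthogonal decomposition $\mathfrak{m}=\mathfrak{m}^{\mathfrak{h}}\oplus[\mathfrak{h},\mathfrak{m}]=(C_{\mathfrak{g}}(\mathfrak{h})\cap\mathfrak{m})\oplus[\mathfrak{h},\mathfrak{m}]$. This identification of the orthogonal complement is the only step that is not purely formal, and it is exactly where the skew-symmetry of the isotropy action enters.

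Next I would prove the ``Moreover'' part. For $Z\in C_{\mathfrak{g}}(\mathfrak{h})$, $H\in\mathfrak{h}$, $W\in\mathfrak{g}$, the invariance identity gives $B([H,W],Z)=-B(W,[H,Z])=0$ because $[H,Z]=0$; hence $B(C_{\mathfrak{g}}(\mathfrak{h}),[\mathfrak{h},\mathfrak{g}])=0$. Since $[\mathfrak{h},\mathfrak{h}]\subset\mathfrak{h}$ and $[\mathfrak{h},\mathfrak{m}]\subset\mathfrak{m}$, and since $B(\mathfrak{h},\mathfrak{m})=0$, the two summands in $[\mathfrak{h},\mathfrak{g}]=[\mathfrak{h},\mathfrak{h}]+[\mathfrak{h},\mathfrak{m}]$ meet only in $\mathfrak{h}\cap\mathfrak{m}=0$ and are $B$-orthogonal, giving the claimed direct, $B$-orthogonal sum $[\mathfrak{h},\mathfrak{g}]=[\mathfrak{h},\mathfrak{h}]\oplus[\mathfrak{h},\mathfrak{m}]$.

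Finally, the $B$-orthogonality of the first decomposition follows as a special case: since $C_{\mathfrak{g}}(\mathfrak{h})\cap\mathfrak{m}\subset C_{\mathfrak{g}}(\mathfrak{h})$ and $[\mathfrak{h},\mathfrak{m}]\subset[\mathfrak{h},\mathfrak{g}]$, the vanishing $B(C_{\mathfrak{g}}(\mathfrak{h}),[\mathfrak{h},\mathfrak{g}])=0$ already proved yields $B(C_{\mathfrak{g}}(\mathfrak{h})\cap\mathfrak{m},[\mathfrak{h},\mathfrak{m}])=0$. I expect no real obstacle here: the whole argument is elementary once the skew-symmetry of $\ad(\mathfrak{h})|_{\mathfrak{m}}$ and the invariance of $B$ are invoked, and the one point demanding care is verifying that $[\mathfrak{h},\mathfrak{m}]$ is precisely the $(\cdot,\cdot)$-orthogonal complement of the fixed subspace $\mathfrak{m}^{\mathfrak{h}}$.
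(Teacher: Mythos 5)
Your proof is correct, and it differs from the paper's own argument in one worthwhile respect. The paper obtains the linear-space splitting $\mathfrak{m}=(C_{\mathfrak{g}}(\mathfrak{h})\cap\mathfrak{m})\oplus[\mathfrak{h},\mathfrak{m}]$ by citing a standard module-theoretic fact (Lemma 14.3.2 in \cite{HilNeeb}) and only afterwards checks, separately, that the two summands are orthogonal with respect to $(\cdot,\cdot)$ and with respect to $B$, each time by the invariance computation $0=-([\mathfrak{h},C_{\mathfrak{g}}(\mathfrak{h})\cap\mathfrak{m}],\mathfrak{m})=(C_{\mathfrak{g}}(\mathfrak{h})\cap\mathfrak{m},[\mathfrak{h},\mathfrak{m}])$. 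You instead make the proof self-contained: identifying the orthogonal complement $[\mathfrak{h},\mathfrak{m}]^{\perp}$ inside $(\mathfrak{m},(\cdot,\cdot))$ with the fixed-point set $C_{\mathfrak{g}}(\mathfrak{h})\cap\mathfrak{m}$ --- both inclusions following from skew-symmetry of $\ad(\mathfrak{h})|_{\mathfrak{m}}$ plus positive definiteness --- yields the direct sum and its $(\cdot,\cdot)$-orthogonality in a single stroke, with no external citation. Your ordering of the remaining claims is also reversed relative to the paper: you first prove $B(C_{\mathfrak{g}}(\mathfrak{h}),[\mathfrak{h},\mathfrak{g}])=0$ from invariance of $B$ (exactly the paper's computation) and then deduce the $B$-orthogonality of the $\mathfrak{m}$-decomposition as a special case, whereas the paper proves that orthogonality directly and states the centralizer claim last; either order is sound. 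What the paper's citation buys is generality --- the splitting of a module over a compactly embedded subalgebra into fixed vectors plus $[\mathfrak{h},\cdot]$ holds independently of any Riemannian metric --- while your route buys a shorter, self-contained argument exploiting the positive definiteness of $(\cdot,\cdot)$, which is of course available here. You also explicitly verify that $[\mathfrak{h},\mathfrak{g}]=[\mathfrak{h},\mathfrak{h}]\oplus[\mathfrak{h},\mathfrak{m}]$ is a direct sum (since the summands lie in $\mathfrak{h}$ and $\mathfrak{m}$ respectively), a point the paper leaves implicit.
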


\begin{proof}
Since $\mathfrak{m}$ is an $\ad(\mathfrak{h})$-module, then we get the decomposition into the sum of linear spaces
$\mathfrak{m}=(C_{\mathfrak{g}}(\mathfrak{h}) \cap \mathfrak{m}) \oplus [\mathfrak{h}, \mathfrak{m}]$, which is standard (see e.~g.
Lemma 14.3.2 in \cite{HilNeeb}). Since the inner product $(\cdot,\cdot)$ is $\ad(\mathfrak{h})$-invariant, then
$0=-([\mathfrak{h},C_{\mathfrak{g}}(\mathfrak{h}) \cap \mathfrak{m}], \mathfrak{m})=(C_{\mathfrak{g}}(\mathfrak{h}) \cap \mathfrak{m},[\mathfrak{h},\mathfrak{m}])$.
The same is valid for the Killing form $B$:
$0=-B([\mathfrak{h},C_{\mathfrak{g}}(\mathfrak{h}) \cap \mathfrak{m}], \mathfrak{m})=
B(C_{\mathfrak{g}}(\mathfrak{h}) \cap \mathfrak{m},[\mathfrak{h},\mathfrak{m}])$.
Moreover, we have $0=-B([\mathfrak{h},C_{\mathfrak{g}}(\mathfrak{h})],\mathfrak{g})=B(C_{\mathfrak{g}}(\mathfrak{h}),[\mathfrak{h}, \mathfrak{g}])=
B(C_{\mathfrak{g}}(\mathfrak{h}),[\mathfrak{h}, \mathfrak{h}]\oplus[\mathfrak{h}, \mathfrak{m}])$.
\end{proof}
\medskip

\begin{remark}
If a Levi subalgebra $\mathfrak{s}$ is $\ad(\mathfrak{h})$-invariant, then
$C_{\mathfrak{g}}(\mathfrak{h}) \cap \mathfrak{m}=(C_{\mathfrak{g}}(\mathfrak{h}) \cap \mathfrak{r(g)})\oplus (C_{\mathfrak{g}}(\mathfrak{h}) \cap \mathfrak{s})$
as linear space. Note also that $B(\mathfrak{r(g)},[\mathfrak{g}, \mathfrak{g}])=B(\mathfrak{r(g)},[\mathfrak{g}, \mathfrak{r(g)}]\oplus \mathfrak{s})=0$ and
$B(\mathfrak{n(g)},\mathfrak{g})=0$.
\end{remark}
\smallskip

In what follows we will need

\begin{prop}\label{quathom1}
Let $G$ be a  Lie group and $K$ be a closed connected subgroup of $G$ such that its Lie algebra $\mathfrak{k}=\operatorname{Lie}(K)$ is compactly embedded in
$\mathfrak{g}=\operatorname{Lie}(G)$. Then the homogeneous space $G/K$ admits
a $G$-invariant Riemannian metric $\mu$. If $L$ is a maximal normal subgroup of $G$ in $K$, then $G/L$ acts transitively and effectively by isometries
on $(G/K,\mu)$, and the group $K/L$ is compact if and only if $G/L$ is a closed subgroup in the full connected isometry group of $(G/K=(G/L)/(K/L),\mu)$.
\end{prop}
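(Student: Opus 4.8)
The plan is to treat the three assertions separately. For the existence of $\mu$, I would use that $\mathfrak{k}$ being compactly embedded means the closure of $\Ad_G(K)$ in $\Aut(\mathfrak{g})$ is compact; averaging any inner product on $\mathfrak{g}$ over this compact group gives an $\Ad(K)$-invariant inner product $Q$, whose restriction to the $Q$-orthogonal complement $\mathfrak{m}$ of the ($\Ad(K)$-invariant) subalgebra $\mathfrak{k}$ defines a $G$-invariant metric $\mu$ on $M:=G/K$. For the second assertion I would note that $G$ acts on $M$ by left translations and that $g\in G$ acts trivially exactly when $x^{-1}gx\in K$ for all $x$, i.e. when $g\in\bigcap_{x\in G}xKx^{-1}$, which is precisely the largest normal subgroup $L$ of $G$ lying in $K$; since $L$ is closed and normal, $\bar G:=G/L$ is a Lie group acting transitively (as $G$ does) and effectively by isometries of $\mu$, with $M=(G/L)/(K/L)$ and isotropy $\bar K:=K/L$ at $o=eK$.

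For the last assertion, write $I:=\Isom_0(M,\mu)$, a Lie group by the Myers--Steenrod theorem, whose isotropy $I_o$ at $o$ is compact because an isometry of the connected $M$ fixing $o$ is determined by its differential, so $I_o$ embeds as a closed subgroup of $O(T_oM)$. The $\bar G$-action gives an injective continuous, hence (Cartan) smooth, homomorphism $\Phi\colon\bar G\to I$; as $\bar G$ is connected it is an injective immersion onto a transitively acting immersed subgroup with isotropy $\Phi(\bar K)=\Phi(\bar G)\cap I_o$. If $\Phi(\bar G)$ is closed in $I$, then $\Phi$ is a homeomorphism onto it, so $\bar K\cong\Phi(\bar G)\cap I_o$ is closed in the compact group $I_o$ and therefore compact; this is the easy implication.

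The hard part will be the converse: assuming $\bar K$ compact, show $\Phi(\bar G)$ closed. I would pass to $H:=\overline{\Phi(\bar G)}$, a closed connected subgroup acting transitively on $M$ with compact isotropy $H_o=H\cap I_o$, and the crux is to prove $H_o=\Phi(\bar K)$. Given $h\in H_o$, density supplies $g_n\in\bar G$ with $\Phi(g_n)\to h$, whence $g_n\cdot o\to o$; using a local section of the fibration $\bar G\to\bar G/\bar K=M$ (available since $\bar K$ is closed) I would factor $g_n=c_n\kappa_n$ with $c_n\to e$ and $\kappa_n\in\bar K$, so that $\Phi(\kappa_n)=\Phi(c_n)^{-1}\Phi(g_n)\to h$. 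Compactness of $\bar K$ makes $\Phi(\bar K)$ compact, hence closed, forcing $h\in\Phi(\bar K)$; thus $H_o=\Phi(\bar K)$ and $\dim H_o=\dim\bar K$. Comparing $\dim M=\dim H-\dim H_o=\dim\bar G-\dim\bar K$ yields $\dim H=\dim\bar G$, so the full-dimensional connected immersed subgroup $\Phi(\bar G)$ is open in $H$, hence equals $H$ and is closed. The one step genuinely requiring the hypothesis, and hence the main obstacle, is exactly this compactness-driven identification $H_o=\Phi(\bar K)$, which forbids the closure from gaining extra isotropy dimension.
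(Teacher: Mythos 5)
Your proposal is correct, but it takes a genuinely different, more self-contained route than the paper, whose proof is essentially a proof by citation: for the existence of $\mu$ the author only observes that compact embeddedness of $\mathfrak{k}$ means the closure of $\Ad_G(K)$ in $\Aut(\mathfrak{g})$, hence in $GL(\mathfrak{g})$, is compact, and then invokes Theorem 3.16 of \cite{ChEb}; the passage to the effective presentation $(G/L)/(K/L)$ is simply asserted; and the equivalence between compactness of $K/L$ and closedness of $G/L$ in the full connected isometry group is quoted from Theorem 1.1 of \cite{DMM}. You instead prove all three ingredients directly: your averaging construction of an $\Ad(K)$-invariant inner product and restriction to an invariant complement is exactly the sufficiency half of the Cheeger--Ebin criterion; your identification of the kernel of the $G$-action with $\bigcap_{x\in G}xKx^{-1}$, the largest normal subgroup of $G$ inside $K$, settles transitivity and effectiveness of $G/L$; and your closure argument --- factoring $g_n=c_n\kappa_n$ through a local section of $\bar G\to\bar G/\bar K$, using compactness of $\Phi(\bar K)$ to force $H_o=\Phi(\bar K)$ for $H=\overline{\Phi(\bar G)}$, and then counting dimensions to conclude $\Phi(\bar G)=H$ --- is a correct direct proof of the cited result of Dotti Miatello and Miatello, which is the only substantive content of the proposition (note that your final step uses connectedness of $\bar G=G/L$, which is the paper's standing assumption on $G$). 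What the paper's route buys is brevity and deference to the literature; what yours buys is independence from \cite{DMM} together with an explicit mechanism explaining the phenomenon: if the isotropy is noncompact, the closure of $G/L$ in the isometry group must pick up extra isotropy dimension, so $G/L$ cannot be closed.
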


\begin{proof}
By the assumption and the definition of compactly embedded subalgebra, there is an inner product $(\cdot,\cdot)$ on the Lie algebra $\mathfrak{g}$
relative to which the operators $\ad(X):\mathfrak{g} \mapsto \mathfrak{g}$, $X\in \mathfrak{k}$, are skew-symmetric.
Equivalently, the closure of $\Ad_G(K)$ in $\Aut(\mathfrak{g})$ is a compact.
Since $\Aut(\mathfrak{g})$ carries the relative topology of $GL(\mathfrak{g})$, then the closure of $\Ad_G(K)$ in $GL(\mathfrak{g})$ is a compact.
By Theorem 3.16 in \cite{ChEb}, the space $G/K$ admits a $G$-invariant Riemannian metric $\mu$, that is a  $(G/L)$-invariant Riemannian metric
on the effective space $(G/L)/(K/L)$. For the last assertion see e.~g. Theorem 1.1 in \cite{DMM}.
\end{proof}
\smallskip

Now we point out a closed connected subgroup $K$ in $G$ with the Lie algebra $C_{\mathfrak{g}}(\mathfrak{h})\oplus [\mathfrak{h},\mathfrak{h}]$.

\begin{prop}\label{quathom2}
Let us consider the normalizer $N_G(\mathfrak{h})=\{a\in G\,|\, \Ad(a)(\mathfrak{h})\subset \mathfrak{h}\}$ of $\mathfrak{h}$ in the Lie group $G$.
Then it is a closed subgroup in $G$ with the Lie algebra $C_{\mathfrak{g}}(\mathfrak{h})\oplus [\mathfrak{h},\mathfrak{h}]$.
The same property has its unit component $N_G(\mathfrak{h})_0$.
\end{prop}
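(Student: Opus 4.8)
The plan is to reduce everything to the Lie algebra level. First I would record that $N_G(\mathfrak{h})$ is a closed subgroup: the map $a\mapsto \Ad(a)$ is continuous, and since $\Ad(a)$ is invertible and preserves dimensions, the defining condition $\Ad(a)(\mathfrak{h})\subset\mathfrak{h}$ is equivalent to $\Ad(a)(\mathfrak{h})=\mathfrak{h}$ and is a closed condition; the subgroup property is then immediate from $\Ad(ab)=\Ad(a)\Ad(b)$ and $\Ad(a^{-1})=\Ad(a)^{-1}$. Being a closed subgroup, $N_G(\mathfrak{h})$ is a Lie subgroup, and its unit component $N_G(\mathfrak{h})_0$ is an open and closed subgroup sharing the same Lie algebra, so the final assertion of the statement will follow automatically once the Lie algebra is computed.

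Next I would identify $\operatorname{Lie}(N_G(\mathfrak{h}))$ with the algebraic normalizer $N_{\mathfrak{g}}(\mathfrak{h}):=\{X\in\mathfrak{g}\mid [X,\mathfrak{h}]\subset\mathfrak{h}\}$. Indeed $X\in\operatorname{Lie}(N_G(\mathfrak{h}))$ if and only if $\Ad(\exp(tX))(\mathfrak{h})=\mathfrak{h}$ for all $t$, i.e. $\exp(t\,\ad(X))(\mathfrak{h})\subset\mathfrak{h}$ for all $t$; differentiating at $t=0$ gives $[X,\mathfrak{h}]\subset\mathfrak{h}$, and conversely $\ad(X)\mathfrak{h}\subset\mathfrak{h}$ forces every $(\ad X)^n$, hence $\exp(t\,\ad(X))$, to preserve $\mathfrak{h}$. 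Thus it remains to prove the purely algebraic identity $N_{\mathfrak{g}}(\mathfrak{h})=C_{\mathfrak{g}}(\mathfrak{h})\oplus[\mathfrak{h},\mathfrak{h}]$.

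The inclusion ``$\supseteq$'' together with directness is easy: $C_{\mathfrak{g}}(\mathfrak{h})$ normalizes $\mathfrak{h}$ because $[C_{\mathfrak{g}}(\mathfrak{h}),\mathfrak{h}]=0\subset\mathfrak{h}$, and $[\mathfrak{h},\mathfrak{h}]$ normalizes $\mathfrak{h}$ because $[[\mathfrak{h},\mathfrak{h}],\mathfrak{h}]\subset[\mathfrak{h},\mathfrak{h}]\subset\mathfrak{h}$; moreover any element of $[\mathfrak{h},\mathfrak{h}]$ lying in $C_{\mathfrak{g}}(\mathfrak{h})$ is central in the semisimple algebra $[\mathfrak{h},\mathfrak{h}]$, hence zero, so $C_{\mathfrak{g}}(\mathfrak{h})\cap[\mathfrak{h},\mathfrak{h}]=0$. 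For ``$\subseteq$'' I would exploit that $\mathfrak{h}$ is compactly embedded in $\mathfrak{g}$ (since $H$ is compact), so that the reductive complement $\mathfrak{m}$ in $\mathfrak{g}=\mathfrak{h}\oplus\mathfrak{m}$ can be taken $\ad(\mathfrak{h})$-invariant. Given $X$ with $[X,\mathfrak{h}]\subset\mathfrak{h}$, I write $X=H_X+M_X$ with $H_X\in\mathfrak{h}$ and $M_X\in\mathfrak{m}$; then $[M_X,\mathfrak{h}]\subset\mathfrak{m}$ by $\ad(\mathfrak{h})$-invariance, while $[M_X,\mathfrak{h}]=[X,\mathfrak{h}]-[H_X,\mathfrak{h}]\subset\mathfrak{h}$, so $[M_X,\mathfrak{h}]\subset\mathfrak{m}\cap\mathfrak{h}=0$, i.e. $M_X\in C_{\mathfrak{g}}(\mathfrak{h})$. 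Decomposing $H_X=c+s$ with $c\in\mathfrak{c(h)}\subset C_{\mathfrak{g}}(\mathfrak{h})$ (the center of $\mathfrak{h}$) and $s\in[\mathfrak{h},\mathfrak{h}]$ then yields $X=(c+M_X)+s\in C_{\mathfrak{g}}(\mathfrak{h})\oplus[\mathfrak{h},\mathfrak{h}]$.

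I expect the only genuine content to lie in this last inclusion, and the single place where the hypotheses are really used is the availability of an $\ad(\mathfrak{h})$-invariant complement $\mathfrak{m}$, guaranteed by the compactness of $H$; everything else is formal. Combining the three steps shows that $N_G(\mathfrak{h})$, and hence its unit component $N_G(\mathfrak{h})_0$, is a closed subgroup with Lie algebra $C_{\mathfrak{g}}(\mathfrak{h})\oplus[\mathfrak{h},\mathfrak{h}]$, which is exactly the compactly embedded subalgebra of Corollary \ref{cor1}.
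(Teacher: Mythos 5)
Your proof is correct and follows essentially the same route as the paper: identify $\operatorname{Lie}(N_G(\mathfrak{h}))$ with the algebraic normalizer $\{X\in\mathfrak{g}\mid[X,\mathfrak{h}]\subset\mathfrak{h}\}$, split $X$ along the $\Ad(H)$-invariant decomposition $\mathfrak{g}=\mathfrak{h}\oplus\mathfrak{m}$ to force the $\mathfrak{m}$-component into $C_{\mathfrak{g}}(\mathfrak{h})$ via $[\mathfrak{h},\mathfrak{m}]\subset\mathfrak{m}$, and then rewrite $\mathfrak{h}+C_{\mathfrak{g}}(\mathfrak{h})$ as $C_{\mathfrak{g}}(\mathfrak{h})\oplus[\mathfrak{h},\mathfrak{h}]$. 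The only difference is that you spell out the steps the paper labels as obvious (closedness, the exponential characterization of the Lie algebra, and directness of the sum via semisimplicity of $[\mathfrak{h},\mathfrak{h}]$), which is fine.
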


\begin{proof} The closeness of $N_G(\mathfrak{h})$ follows directly from its definition.
Hence, $N_G(\mathfrak{h})$ is a Lie subgroup of $G$.
Let us show that  $N_G(\mathfrak{h})$ corresponds to the Lie subalgebra $C_{\mathfrak{g}}(\mathfrak{h})\oplus [\mathfrak{h},\mathfrak{h}]$ in
$\mathfrak{g}$. Indeed, the Lie algebra of $N_G(\mathfrak{h})$ could be characterized as
$\operatorname{Lie}(N_G(\mathfrak{h}))=\{X\in \mathfrak{g}\,|\, [X,\mathfrak{h}]\subset \mathfrak{h}\}$.
It is clear that $\mathfrak{h} \subset \operatorname{Lie}(N_G(\mathfrak{h}))$. Further, consider any $X\in \operatorname{Lie}(N_G(\mathfrak{h}))$.
There are $U\in \mathfrak{h}$ and $V\in \mathfrak{m}$ such that $X=U+V$. Since $U\in \operatorname{Lie}(N_G(\mathfrak{h}))$, then
$V\in \operatorname{Lie}(N_G(\mathfrak{h}))$. Recall that $[\mathfrak{h},\mathfrak{m}]\subset \mathfrak{m}$. Hence,
$V\in \operatorname{Lie}(N_G(\mathfrak{h}))$ is equivalent to $[V,\mathfrak{h}]=0$, i.~e. to $V\in C_{\mathfrak{g}}(\mathfrak{h})$.
Obviously also, that $V\in C_{\mathfrak{g}}(\mathfrak{h})$ implies $X=U+V\in \operatorname{Lie}(N_G(\mathfrak{h}))$. It is also obvious that
$\operatorname{Lie}(N_G(\mathfrak{h}))=C_{\mathfrak{g}}(\mathfrak{h})\oplus [\mathfrak{h},\mathfrak{h}]$.
\end{proof}
\smallskip

From Lemma \ref{comni3} and Proposition \ref{quathom2}, we easily get

\begin{corollary}\label{symmmet}
The inner product $(\cdot,\cdot)$, generating
the metric of a geodesic orbit Riemannian space $(G/H,\rho)$, is not only $\Ad(H)$-invariant but also $\Ad(N_G(H_0))$-invariant, where
$N_G(H_0)$ is the normalizer of the unit component $H_0$ of the group $H$ in $G$.
\end{corollary}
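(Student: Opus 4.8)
The plan is to reduce invariance under the normalizer to the skew-symmetry of $\ad(Y)|_{\mathfrak{m}}$ for $Y$ in its Lie algebra, and then to pass from the identity component to the full group. First I would record that normalizing the connected subgroup $H_0$ is the same as normalizing $\mathfrak{h}$: for $a\in G$ the subgroup $aH_0a^{-1}$ is connected with Lie algebra $\Ad(a)\mathfrak{h}$, so $N_G(H_0)=N_G(\mathfrak{h})=\{a\in G\,|\,\Ad(a)\mathfrak{h}=\mathfrak{h}\}$. By Proposition \ref{quathom2} the Lie algebra of $N_G(H_0)$ (and of its unit component) is therefore $C_{\mathfrak{g}}(\mathfrak{h})\oplus[\mathfrak{h},\mathfrak{h}]$. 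Moreover, since $\Ad(a)$ preserves both $\mathfrak{h}$ and the Killing form $B$, and $\mathfrak{m}=\{X\in\mathfrak{g}\,|\,B(X,\mathfrak{h})=0\}$ under our standing assumption $B(\mathfrak{h},\mathfrak{m})=0$, every $\Ad(a)$ with $a\in N_G(H_0)$ preserves $\mathfrak{m}$, so asking for $\Ad(N_G(H_0))$-invariance of $(\cdot,\cdot)$ is well posed.

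The central computation is that $\ad(Y)|_{\mathfrak{m}}$ is $(\cdot,\cdot)$-skew-symmetric for every $Y\in C_{\mathfrak{g}}(\mathfrak{h})\oplus[\mathfrak{h},\mathfrak{h}]$. For $Y\in[\mathfrak{h},\mathfrak{h}]\subset\mathfrak{h}$ this follows by differentiating the $\Ad(H)$-invariance of $(\cdot,\cdot)$ along $\exp(tY)$; for $Y\in C_{\mathfrak{g}}(\mathfrak{h})$ it is exactly the first assertion of Lemma \ref{comni3}, which also guarantees $[Y,\mathfrak{m}]\subset\mathfrak{m}$ so that the restriction is defined. Exponentiating these skew-symmetric operators shows that $(\cdot,\cdot)$ is invariant under the identity component $N_G(H_0)_0$; together with the hypothesis that $(\cdot,\cdot)$ is $\Ad(H)$-invariant, this already yields invariance under the subgroup of $N_G(H_0)$ generated by $H$ and $N_G(H_0)_0$.

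The step I expect to be the main obstacle is the upgrade from this subgroup to the full, possibly disconnected, normalizer, because $H\cdot N_G(H_0)_0$ need not exhaust $N_G(H_0)$ (already for $G=SO(3)$, $H=SO(2)$ the normalizer has two components while $H\cdot N_G(H_0)_0=H$). For a representative $a$ of a component not reached from $H$, I would study the pulled-back form $(X,Y)^a:=(\Ad(a^{-1})X,\Ad(a^{-1})Y)$: since $a^{-1}H_0a=H_0$ and $(\cdot,\cdot)$ is $\Ad(H_0)$-invariant, $(\cdot,\cdot)^a$ is again $\Ad(H_0)$-invariant, and pushing the criterion of Lemma \ref{GO-criterion} through the automorphism $\Ad(a)$ shows that $(\cdot,\cdot)^a$ is once more a geodesic orbit inner product for the same reductive decomposition. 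As $\Ad(a)$ also preserves $B$, the remaining task is to force $(\cdot,\cdot)^a=(\cdot,\cdot)$ by comparing the two forms through the $\Ad(H_0)$-equivariant symmetric operator $A$ with $B(X,Y)=(AX,Y)$ from Lemma \ref{h.2}, matching them on the common Killing-eigenspaces. This rigidity on the exceptional components is the delicate point, whereas the reduction to the skew-symmetry of $\ad(Y)|_{\mathfrak{m}}$ via Lemma \ref{comni3} and Proposition \ref{quathom2} is entirely routine.
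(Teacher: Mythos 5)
Your first two paragraphs are, in substance, the paper's entire proof. The paper obtains Corollary \ref{symmmet} exactly by combining Proposition \ref{quathom2} (the identification $N_G(H_0)=N_G(\mathfrak{h})$ together with $\operatorname{Lie}(N_G(\mathfrak{h}))=C_{\mathfrak{g}}(\mathfrak{h})\oplus[\mathfrak{h},\mathfrak{h}]$) with Lemma \ref{comni3} (skew-symmetry of $\ad(Y)|_{\mathfrak{m}}$ for $Y\in C_{\mathfrak{g}}(\mathfrak{h})$, which also gives $[Y,\mathfrak{m}]\subset\mathfrak{m}$) and the $\Ad(H)$-invariance of $(\cdot,\cdot)$ on $[\mathfrak{h},\mathfrak{h}]$; exponentiating these skew-symmetric operators yields invariance under $N_G(H_0)_0$, hence under the subgroup generated by $H$ and $N_G(H_0)_0$. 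Up to that point your argument is correct and coincides with the paper's.

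The divergence is your third paragraph, and there the proposal stops being a proof. You are right that $H\cdot N_G(H_0)_0$ can be a proper subgroup of $N_G(H_0)$ (your $SO(3)/SO(2)$ example is correct), so the infinitesimal argument alone does not literally give $\Ad(N_G(H_0))$-invariance. But your treatment of the outer components is only a program: the ``rigidity'' you need, namely $(\cdot,\cdot)^a=(\cdot,\cdot)$, is precisely the statement to be proved, and ``matching the two forms on common Killing-eigenspaces of $A$'' does not accomplish it. From the fact that $\Ad(a)$ preserves $B$ you only learn that the operator of the pulled-back form is the conjugate of $A$ by $\Ad(a)$ (Lemma \ref{h.2}); you give no argument that $\Ad(a)$ preserves each eigenspace $A_{\alpha}$, and, worse, on $A_0=\mathfrak{n(g)}$ (Theorem \ref{strucrad1}) the Killing form vanishes identically, so $B$ imposes no constraint whatsoever there, while GO inner products are far from unique, so the fact that both forms satisfy the criterion of Lemma \ref{GO-criterion} does not force equality either. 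Thus, judged as a complete proof of the stated corollary, your attempt has a genuine gap exactly at the point you flag as ``delicate.'' It is fair to add that the paper does not close this gap either: its proof is literally ``from Lemma \ref{comni3} and Proposition \ref{quathom2} we easily get,'' i.e.\ the identity-component argument of your first two paragraphs, with the passage from $H\cdot N_G(H_0)_0$ to all of $N_G(H_0)$ left unaddressed; note that the paper's subsequent use of these ideas (the construction preceding Theorem \ref{rsubm2}) needs only the unit component $K=N_G(\mathfrak{h})_0$, so nothing downstream depends on the disconnected part. Your reading therefore exposes a real imprecision in the paper, but it does not repair it.
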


This property is well known for weakly symmetric spaces (see e.~g. Lemma 2 in~\cite{Yakimova})
and for generalized normal homogeneous spaces (Corollary 6 in~\cite{BerNik}).
\smallskip

Let $K$ be the unit component of $N_G(\mathfrak{h})$
with the Lie algebra $\mathfrak{k}=C_{\mathfrak{g}}(\mathfrak{h})\oplus [\mathfrak{h},\mathfrak{h}]$.
Let $L$ be the maximal normal subgroup of $G$ in $K$. By Corollary \ref{cor1}, $\mathfrak{k}$ is compactly embedded in~$\mathfrak{g}$.
By Proposition \ref{quathom1}, the space $G/K$ admits a $G$-invariant Riemannian metric
and every such metric is $\widetilde{G}$-invariant on the space $\widetilde{G}/\widetilde{K}$, where
$\widetilde{K}=K/L$ and $\widetilde{G}=G/L$.

Clear that $\mathfrak{k}=\mathfrak{h}\oplus (\mathfrak{k}\cap \mathfrak{m})$.
Put $\widetilde{\mathfrak{m}}:=[\mathfrak{h}, \mathfrak{m}]$,   both $B$-orthogonal and  $(\cdot,\cdot)$-orthogonal complement
to $C_{\mathfrak{g}}(\mathfrak{h})\cap \mathfrak{m}=\mathfrak{k}\cap \mathfrak{m}$ in $\mathfrak{m}$, see Proposition \ref{nilpcompli1n}.
Obviously, $\widetilde{\mathfrak{m}}$ is $\Ad_G(K)$-invariant.

By Lemma \ref{comni3}, for any
$Y\in \mathfrak{k}$ the operator $\ad(Y)|_{\mathfrak{m}}$ is skew-symmetric, the same we can say about $\ad(Y)|_{\widetilde{\mathfrak{m}}}$.
Therefore, $(\cdot, \cdot)_1$, the restriction of $(\cdot, \cdot)$ to $\widetilde{\mathfrak{m}}$,
correctly determines some $G$-invariant Riemannian metric $\widetilde{\rho}$
on the homogeneous space $G/K=\widetilde{G}/\widetilde{K}$.
Moreover, there is a natural Riemannian submersion (generated by an orthogonal projection $\mathfrak{m} \rightarrow \widetilde{\mathfrak{m}}$)

\begin{equation}\label{rsubm1}
(G/H, \rho) \rightarrow (\widetilde{G}/\widetilde{K},\widetilde{\rho}).
\end{equation}

\smallskip

\begin{theorem}\label{rsubm2} The homogeneous Riemannian space $(G/K=\widetilde{G}/\widetilde{K},\widetilde{\rho})$ is geodesic orbits.
Moreover, all $\Ad(\widetilde{K})$-invariant submodules
in $\widetilde{\mathfrak{m}}$ have dimension $\geq 2$, $\widetilde{\mathfrak{n(g)}}=[\widetilde{\mathfrak{g}},\widetilde{\mathfrak{r(g)}}]$, and
$\widetilde{\mathfrak{r(g)}}=\widetilde{\mathfrak{n(g)}}\oplus \bigl(\widetilde{\mathfrak{r(g)}}\cap \widetilde{\mathfrak{k}}\,\bigr)$,
where $\widetilde{\mathfrak{g}}=\operatorname{Lie}(\widetilde{G})$, $\widetilde{\mathfrak{r(g)}}$ and $\widetilde{\mathfrak{n(g)}}$ are
the radical and the nilradical of $\widetilde{\mathfrak{g}}$ respectively.
Consequently, the group
$N\bigl(\widetilde{G}\bigr)\cdot \widetilde{S}$ acts transitively on $(\widetilde{G}/\widetilde{K},\widetilde{\rho})$,
where $N\bigl(\widetilde{G}\bigr)$ is the largest connected nilpotent normal subgroup and $\widetilde{S}$ is any semisimple Levi factor in $\widetilde{G}$.
\end{theorem}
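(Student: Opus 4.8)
The plan is to verify the four structural claims in turn, always working with the concrete reductive decomposition $\widetilde{\mathfrak{g}}=\widetilde{\mathfrak{k}}\oplus\widetilde{\mathfrak{m}}$ inherited from $\mathfrak{g}=\mathfrak{k}\oplus\widetilde{\mathfrak{m}}$ (recall $\widetilde{\mathfrak{m}}=[\mathfrak{h},\mathfrak{m}]$ lifts isomorphically since $\mathfrak{l}=\operatorname{Lie}(L)\subset\mathfrak{k}$), and then to feed the geodesic orbit conclusion back into Theorem \ref{strucrad1} and Proposition \ref{nilpcompli1} applied to $\widetilde{G}/\widetilde{K}$ itself. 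First, the geodesic orbit property: given $X\in\widetilde{\mathfrak{m}}$, I would use that $(G/H,\rho)$ is GO to pick, by Lemma \ref{GO-criterion}, some $Z\in\mathfrak{h}\subset\mathfrak{k}$ with $([X+Z,Y]_{\mathfrak{m}},X)=0$ for all $Y\in\mathfrak{m}$. Taking the same $Z$ (now viewed in $\widetilde{\mathfrak{k}}$) and any $Y\in\widetilde{\mathfrak{m}}$, the orthogonalities $X\perp\mathfrak{h}$ and $X\perp(\mathfrak{k}\cap\mathfrak{m})$ (because $\widetilde{\mathfrak{m}}$ is $(\cdot,\cdot)$-orthogonal to $C_{\mathfrak{g}}(\mathfrak{h})\cap\mathfrak{m}=\mathfrak{k}\cap\mathfrak{m}$ by Proposition \ref{nilpcompli1n}) collapse $([X+Z,Y],X)$ to its $\widetilde{\mathfrak{m}}$-part, giving $([X+Z,Y]_{\widetilde{\mathfrak{m}}},X)=([X+Z,Y]_{\mathfrak{m}},X)=0$. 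Since the projection $\mathfrak{g}\to\widetilde{\mathfrak{g}}$ is a Lie homomorphism with $\mathfrak{l}\subset\mathfrak{k}$, this computes the $\widetilde{\mathfrak{m}}$-component of the bracket in $\widetilde{\mathfrak{g}}$, so Lemma \ref{GO-criterion} yields the GO-property of $\widetilde{G}/\widetilde{K}$.

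Next, the dimension claim. Suppose $\mathbb{R}X\subset\widetilde{\mathfrak{m}}$ were a one-dimensional $\Ad(\widetilde{K})$-invariant submodule. Being invariant under the connected group $\widetilde{K}$, it is $\ad(\mathfrak{h})$-invariant, and since $\ad(V)$ is $(\cdot,\cdot)$-skew-symmetric for $V\in\mathfrak{h}$, the relation $[V,X]\in\mathbb{R}X$ together with $([V,X],X)=0$ forces $[\mathfrak{h},X]=0$, i.e. $X\in C_{\mathfrak{g}}(\mathfrak{h})\cap\widetilde{\mathfrak{m}}=0$. Hence no such submodule exists. The point here is precisely that $\widetilde{\mathfrak{m}}=[\mathfrak{h},\mathfrak{m}]$ carries no nonzero $\ad(\mathfrak{h})$-fixed vector; this same fact gives $C_{\widetilde{\mathfrak{g}}}(\widetilde{\mathfrak{k}})\cap\widetilde{\mathfrak{m}}=0$, so that $C_{\widetilde{\mathfrak{g}}}(\widetilde{\mathfrak{k}})\subset\widetilde{\mathfrak{k}}$, which I will reuse below.

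For the nilradical and radical, I would apply the results of this section to the effective GO-space $\widetilde{G}/\widetilde{K}$ (its isotropy $\widetilde{K}$ is compactly embedded by Corollary \ref{cor1} and Proposition \ref{quathom1}, and effectiveness is built into the definition of $L$). Theorem \ref{strucrad1}(3) gives a $(\cdot,\cdot)$-orthogonal splitting $\widetilde{\mathfrak{n(g)}}=[\widetilde{\mathfrak{g}},\widetilde{\mathfrak{r(g)}}]\oplus\widetilde{\mathfrak{l}}$ with $\widetilde{\mathfrak{l}}$ central in $\widetilde{\mathfrak{g}}$. Any central $X$ decomposes along $\widetilde{\mathfrak{g}}=\widetilde{\mathfrak{k}}\oplus\widetilde{\mathfrak{m}}$ into central components, and its $\widetilde{\mathfrak{k}}$-component lies in the ideal $\mathfrak{z}(\widetilde{\mathfrak{g}})\cap\widetilde{\mathfrak{k}}=0$, so $X\in\widetilde{\mathfrak{m}}$ spans an $\Ad(\widetilde{K})$-fixed line there; by the dimension claim $\widetilde{\mathfrak{l}}=0$, proving $\widetilde{\mathfrak{n(g)}}=[\widetilde{\mathfrak{g}},\widetilde{\mathfrak{r(g)}}]$. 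For the radical, choose an $\ad(\widetilde{\mathfrak{k}})$-invariant complement $\widetilde{\mathfrak{q}}$ to $\widetilde{\mathfrak{n(g)}}$ in $\widetilde{\mathfrak{r(g)}}$; by Proposition \ref{nilpcompli1}(1) it lies in $C_{\widetilde{\mathfrak{g}}}(\widetilde{\mathfrak{k}})\subset\widetilde{\mathfrak{k}}$, so $\widetilde{\mathfrak{q}}\subset\widetilde{\mathfrak{r(g)}}\cap\widetilde{\mathfrak{k}}$. Since the Killing form $B_{\widetilde{\mathfrak{g}}}$ is negative definite on $\widetilde{\mathfrak{k}}$ (isotropy of an effective space) while it vanishes on $\widetilde{\mathfrak{n(g)}}$, one has $\widetilde{\mathfrak{n(g)}}\cap\widetilde{\mathfrak{k}}=0$, and a dimension count promotes $\widetilde{\mathfrak{q}}\subset\widetilde{\mathfrak{r(g)}}\cap\widetilde{\mathfrak{k}}$ to the equality $\widetilde{\mathfrak{r(g)}}=\widetilde{\mathfrak{n(g)}}\oplus(\widetilde{\mathfrak{r(g)}}\cap\widetilde{\mathfrak{k}})$.

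Finally, transitivity of $N(\widetilde{G})\cdot\widetilde{S}$ is equivalent to $\widetilde{\mathfrak{n(g)}}+\widetilde{\mathfrak{s}}+\widetilde{\mathfrak{k}}=\widetilde{\mathfrak{g}}$, which is then immediate: combining the Levi decomposition $\widetilde{\mathfrak{g}}=\widetilde{\mathfrak{r(g)}}+\widetilde{\mathfrak{s}}$ with the radical splitting just proved gives $\widetilde{\mathfrak{g}}=\widetilde{\mathfrak{n(g)}}+(\widetilde{\mathfrak{r(g)}}\cap\widetilde{\mathfrak{k}})+\widetilde{\mathfrak{s}}\subset\widetilde{\mathfrak{n(g)}}+\widetilde{\mathfrak{k}}+\widetilde{\mathfrak{s}}$, and the sum is independent of the chosen Levi factor since conjugation by $\exp(\ad\widetilde{\mathfrak{n(g)}})$ preserves $\widetilde{\mathfrak{n(g)}}$. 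The main obstacle I anticipate is not any single computation but the bookkeeping of the passage to $\widetilde{G}=G/L$: one must make sure that $\widetilde{G}/\widetilde{K}$ genuinely satisfies the standing hypotheses of this section (effectiveness, compact embedding of $\widetilde{K}$, availability of a $B_{\widetilde{\mathfrak{g}}}$-orthogonal reductive complement) so that Theorem \ref{strucrad1} and Proposition \ref{nilpcompli1} may be invoked intrinsically in $\widetilde{\mathfrak{g}}$, and that the special feature $C_{\widetilde{\mathfrak{g}}}(\widetilde{\mathfrak{k}})\subset\widetilde{\mathfrak{k}}$ — the real engine behind assertions (3) and (4) — is correctly traced back to the choice $\widetilde{\mathfrak{m}}=[\mathfrak{h},\mathfrak{m}]$.
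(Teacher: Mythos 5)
Your proposal is correct and follows essentially the same route as the paper: the GO-property is obtained by restricting the GO-condition of $(G/H,\rho)$ to $Y\in\widetilde{\mathfrak{m}}$ using the orthogonality of $\widetilde{\mathfrak{m}}=[\mathfrak{h},\mathfrak{m}]$ and $\mathfrak{k}\cap\mathfrak{m}$, the absence of one-dimensional invariant submodules comes from the same engine $C_{\mathfrak{g}}(\mathfrak{k})\subset\mathfrak{k}$ (equivalently, no $\ad(\mathfrak{h})$-fixed vectors in $[\mathfrak{h},\mathfrak{m}]$), and the nilradical/radical claims are deduced by applying Proposition \ref{nilpcompli1} (in your case partly repackaged through Theorem \ref{strucrad1}(3)) to the space $\widetilde{G}/\widetilde{K}$ itself. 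Your version is slightly more explicit than the paper's on the triviality of the center, the dimension count for $\widetilde{\mathfrak{r(g)}}\cap\widetilde{\mathfrak{k}}$, and the final transitivity step, but these are elaborations of the same argument rather than a different one.
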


\begin{proof} Consider $X\in \widetilde{\mathfrak{m}}$. Since $(G/H, \rho)$ is geodesic orbit, there is $Z\in \mathfrak{h}$ such that
$([X+Z,Y]_{\mathfrak{m}},X) =0$ for all $Y\in \mathfrak{m}$, see Lemma \ref{GO-criterion}.
If we consider only $Y\in \widetilde{\mathfrak{m}}$, then we can rewrite this equality as follows:
$([X+Z,Y]_{\widetilde{\mathfrak{m}}},X)_1 =0$. Since $\mathfrak{h} \subset \mathfrak{k}$,
then $(G/K=\widetilde{G}/\widetilde{K},\widetilde{\rho})$ is geodesic orbits according to Lemma \ref{GO-criterion}.
In fact, any geodesic in $(G/K=\widetilde{G}/\widetilde{K},\widetilde{\rho})$ is a projection of some geodesic in $(G/H, \rho)$
through submersion (\ref{rsubm1}).

It is easy to see that
$C_{\mathfrak{g}}(\mathfrak{k})\subset {\mathfrak{k}}$, because even $C_{\mathfrak{g}}(\mathfrak{h})\subset {\mathfrak{k}}$.
In particular, it implies that there is no 1-dimensional $\ad(\mathfrak{k})$-irreducible subspace in $\widetilde{\mathfrak{m}}$, since every such subspace
is a subset of $C_{\mathfrak{g}}(\mathfrak{k})$.  Applying Proposition \ref{nilpcompli1} to the space $(G/K=\widetilde{G}/\widetilde{K},\widetilde{\rho})$,
we get $\widetilde{\mathfrak{n(g)}}=[\widetilde{\mathfrak{g}},\widetilde{\mathfrak{r(g)}}]$ and
$\widetilde{\mathfrak{r(g)}}=\widetilde{\mathfrak{n(g)}}\oplus \bigl(\widetilde{\mathfrak{r(g)}}\cap \widetilde{\mathfrak{k}}\,\bigr)$.
\end{proof}

\medskip

\medskip

\section{Relations to representations with non-trivial principal \\isotropy algebras}

As in the previous section, we consider an arbitrary geodesic orbit Riemannian space $(G/H, \rho)$ and an $\Ad(H)$-invariant decomposition
$\mathfrak{g}=\mathfrak{h}\oplus \mathfrak{m}$, where $\mathfrak{m}:=\{X\in \mathfrak{g}\,|\,B(X,\mathfrak{h})=0\}$,
$B$ is the Killing form of~$\mathfrak{g}$, and $\rho$ generated by an inner product $(\cdot,\cdot)$ on  $\mathfrak{m}$.
Let us consider  the operator
$A:\mathfrak{m} \rightarrow \mathfrak{m}$, related with the Killing form by $B(X,Y)=(AX,Y)$, see Lemma \ref{h.2} and Remark \ref{eigkill1}.
All results in this section are obtained under these assumptions.
\smallskip

Recall that the operator $A$ is invertible on the subspace $\mathfrak{m}_{\pm}:=\oplus_{\alpha \neq 0} A_{\alpha}$,
where $A_{\alpha}$ is the eigenspace of the operator $A$ with the eigenvalue $\alpha$. In what follows we assume that
$A^{-1}X \in \mathfrak{m}_{\pm}$ for all $X\in \mathfrak{m}_{\pm}$.

\begin{prop}\label{gokilf}
The following assertions hold:

{\rm 1)} For $X\in \mathfrak{m}_{\pm}$, the GO-condition from Lemma \ref{GO-criterion} {\rm(}there is $Z \in \mathfrak{h}$ such that
$([X+Z,Y]_{\mathfrak{m}},X) =0$ for all $Y\in \mathfrak{m}${\rm)}  is equivalent to the following one:
there is $Z \in \mathfrak{h}$ such that $[X+Z,A^{-1}X]\subset A_0\oplus \mathfrak{h}$.

{\rm 2)} For any $X\in A_{\alpha}$ and $Y\in A_{\beta}$, where $0\neq \alpha\neq \beta\neq 0$, there is $Z \in \mathfrak{h}$ such that
$(\beta-\alpha)[X,Y]=\beta[Z,X]+\alpha[Z,Y]$. In particular, $[A_{\alpha},A_{\beta}]\subset A_{\alpha}\oplus A_{\beta}$.

{\rm 3)} Let $X,Y \in A_{\alpha}$, $\alpha\neq 0$, are such that $([\mathfrak{h},X],Y)=0$, then $[X,Y]\in A_0\oplus A_{\alpha}$.
\end{prop}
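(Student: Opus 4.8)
The plan is to reduce each of the three assertions to the GO-criterion of Lemma \ref{GO-criterion}, exploiting that $A$ is $(\cdot,\cdot)$-symmetric with $B$- and $(\cdot,\cdot)$-orthogonal eigenspaces $A_\gamma$ (Lemma \ref{h.2}), that $B$ is negative definite on $\mathfrak{h}$, and the defining identity $B(U,V)=(AU,V)$. For assertion 1 I would set $W:=A^{-1}X\in\mathfrak{m}_\pm$, so $X=AW$ and hence $(U,X)=(U,AW)=(AU,W)=B(U,W)$ for every $U\in\mathfrak{m}$. Applying this to $U=[X+Z,Y]_{\mathfrak{m}}$, using $B(\mathfrak{h},\mathfrak{m})=0$ and the $\ad$-invariance of $B$, gives $([X+Z,Y]_{\mathfrak{m}},X)=B([X+Z,Y],W)=-B(Y,[X+Z,A^{-1}X])$. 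Thus, for a fixed $Z$, the GO-condition holds iff $[X+Z,A^{-1}X]$ is $B$-orthogonal to $\mathfrak{m}$; writing this element as $V_{\mathfrak{h}}+V_{\mathfrak{m}}$ and noting that $B(\mathfrak{m},V_{\mathfrak{m}})=0$ forces $AV_{\mathfrak{m}}=0$, this is exactly $[X+Z,A^{-1}X]\in A_0\oplus\mathfrak{h}$.

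For assertion 2 I would feed $W:=X+Y\in\mathfrak{m}_\pm$ into assertion 1, so $A^{-1}W=\alpha^{-1}X+\beta^{-1}Y$; expanding $[W+Z,A^{-1}W]$ (the terms $[X,X],[Y,Y]$ drop, and since $A_\alpha,A_\beta$ are $\ad(\mathfrak{h})$-invariant one has $[Z,X]\in A_\alpha$, $[Z,Y]\in A_\beta$) and clearing denominators produces some $Z\in\mathfrak{h}$ with
\[ P:=(\alpha-\beta)[X,Y]+\beta[Z,X]+\alpha[Z,Y]\in A_0\oplus\mathfrak{h}. \]
Because $B([X,Y],\mathfrak{h})=B(X,[Y,\mathfrak{h}])\subset B(A_\alpha,A_\beta)=0$ and $B$ is negative definite on $\mathfrak{h}$, we get $[X,Y]_{\mathfrak{h}}=0$, so in fact $P\in A_0$; projecting $P$ onto the various eigenspaces then yields $([X,Y])_{A_\alpha}=\tfrac{\beta}{\beta-\alpha}[Z,X]$, the analogous formula on $A_\beta$, and $([X,Y])_{A_\gamma}=0$ for every other $\gamma\neq0$. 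It remains to prove $P=0$, which (since $[Z,X],[Z,Y]$ are $(\cdot,\cdot)$-orthogonal to $A_0$) amounts to $([X,Y],v)=0$ for all $v\in A_0$.

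This last point is the main obstacle: the Killing form vanishes identically on the ideal $A_0$, so the reformulation of assertion 1 only controls brackets modulo $A_0\oplus\mathfrak{h}$ and is blind to the $A_0$-component. I would circumvent this by a second application of the GO-condition, now with the test vector $X+v$ for $v\in A_0$. Pairing $([X+v+Z',Y']_{\mathfrak{m}},X+v)=0$ against $Y'=Y$ and discarding every term that vanishes by orthogonality of eigenspaces (all $[Z',Y]$-contributions, the $A_0$-versus-$A_\alpha$ term, and $([X,Y],X)=0$, the latter because the only eigencomponent of $[X,Y]$ not orthogonal to $X$ is $\tfrac{\beta}{\beta-\alpha}[Z,X]$ and $([Z,X],X)=0$) leaves $([X,Y],v)+([v,Y],v)=0$. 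Finally, applying Lemma \ref{skew-symmetryLemma} to the $\Ad(H)$-invariant orthogonal splitting $\mathfrak{m}=\mathfrak{m}_\pm\oplus A_0$, and using that $A_0$ is an ideal so $\ad(Y)$ preserves $A_0$, the operator $\ad(Y)|_{A_0}$ is $(\cdot,\cdot)$-skew-symmetric, whence $([v,Y],v)=0$ and therefore $([X,Y],v)=0$. Thus $P=0$, which is the asserted identity; as its right-hand side lies in $A_\alpha\oplus A_\beta$, the inclusion $[A_\alpha,A_\beta]\subset A_\alpha\oplus A_\beta$ follows.

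For assertion 3, with $X,Y\in A_\alpha$ and the hypothesis $([\mathfrak{h},X],Y)=0$, I would first note that this hypothesis is precisely what kills the $\mathfrak{h}$-component: for $Z'\in\mathfrak{h}$ one has $B([X,Y],Z')=B(X,[Y,Z'])=\alpha(X,[Y,Z'])=\alpha([Z',X],Y)=0$ (using $B=\alpha(\cdot,\cdot)$ on $A_\alpha$ and skew-symmetry of $\ad(Z')$), so $[X,Y]_{\mathfrak{h}}=0$. For an eigenvalue $\gamma\neq0,\alpha$ and $w\in A_\gamma$, I would write $([X,Y],w)=\gamma^{-1}B([X,Y],w)=\gamma^{-1}B(X,[Y,w])$ and invoke assertion 2 for the pair $(Y,w)$: then $[Y,w]\in A_\alpha\oplus A_\gamma$ with $([Y,w])_{A_\alpha}$ a multiple of $[Z,Y]$, so $B(X,[Y,w])=B(X,([Y,w])_{A_\alpha})$ is a multiple of $B(X,[Z,Y])=\alpha(X,[Z,Y])=-\alpha([Z,X],Y)=0$. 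Hence $([X,Y])_{A_\gamma}=0$ for every $\gamma\neq0,\alpha$, and combined with $[X,Y]_{\mathfrak{h}}=0$ this gives $[X,Y]\in A_0\oplus A_\alpha$, as claimed.
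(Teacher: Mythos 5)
Your proposal is correct, and for assertions 1) and 3) it is essentially the paper's own argument: the same identity $([X+Z,Y]_{\mathfrak{m}},X)=-B(Y,[X+Z,A^{-1}X])$ in 1), and in 3) the same reduction of $([X,Y],w)$, $w\in A_\gamma$, to $B(X,[Z,Y])=0$ via assertion 2) (the paper applies 2) to the pair $(X,w)$ rather than $(Y,w)$ --- an immaterial difference). The genuine divergence is in how the $A_0$-component of $[X,Y]$ is killed in assertion 2). The paper obtains $P:=(\alpha-\beta)[X,Y]+\beta[Z,X]+\alpha[Z,Y]\in A_0$ exactly as you do, and then finishes in one stroke: by Lemma \ref{skew-symmetryLemma}, $\ad(X)$ acts skew-symmetrically on $\oplus_{\gamma\neq\alpha}A_\gamma$, and since $A_0$ is an ideal (Remark \ref{eigkill1}) this skew-symmetric operator preserves $A_0$ and hence also its orthogonal complement $\oplus_{\gamma\neq 0,\alpha}A_\gamma$; therefore $[X,Y]\in\mathfrak{m}_{\pm}$, so $P\in A_0\cap\mathfrak{m}_{\pm}=0$. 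You instead run the GO-criterion of Lemma \ref{GO-criterion} a second time on the perturbed vector $X+v$, $v\in A_0$, check that every term except $([X,Y],v)+([v,Y],v)$ vanishes, and then kill $([v,Y],v)$ by skew-symmetry of $\ad(Y)|_{A_0}$ (again Lemma \ref{skew-symmetryLemma} combined with the ideal property of $A_0$). Both routes rest on the same two ingredients, but the paper's use of them is direct and requires no second choice of isotropy element, whereas your perturbation argument costs extra bookkeeping; its one merit is that it exhibits the vanishing of the $A_0$-component as itself a GO-type constraint rather than a purely linear-algebraic consequence.
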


\begin{proof} Let us prove 1). It is easy to see that
$0=([X+Z,Y]_{\mathfrak{m}},X) =B([X+Z,Y], A^{-1}X)=-B(Y,[X+Z,A^{-1}X]$.
Since $Y \in \mathfrak{m}$ is arbitrary and $B$ is non-degenerate on $\mathfrak{m}_{\pm}$, we get the required assertion.

Let us prove 2).  By 1), for the vector $X+Y$ there is
$Z \in \mathfrak{h}$ such that $[X+Y+Z,A^{-1}(X+Y)]\subset A_0\oplus \mathfrak{h}$. Clear that
$$
[X+Y+Z,A^{-1}(X+Y)]=[X+Y+Z, \alpha^{-1} X+\beta^{-1}Y]=
$$
$$
\frac{1}{\alpha\beta}((\alpha-\beta)[X,Y]+\beta[Z,X]+\alpha[Z,Y]).
$$
Since
$A_{\alpha}$ and $A_{\beta}$ are $\ad(\mathfrak{h})$-invariant  and $[A_{\alpha},A_{\beta}]\subset \mathfrak{m}$ by Lemma \ref{h.2}, we get
$(\alpha-\beta)[X,Y]+\beta[Z,X]+\alpha[Z,Y]\in A_0$. But $A_0$ is in ideal in $\mathfrak{g}$ and the restriction of $\ad(X)$  to
$\oplus_{\gamma \neq \alpha} A_{\gamma}$ is skew-symmetric by  Lemma \ref{skew-symmetryLemma},
hence, $\ad(X)(A_{\beta})\subset \mathfrak{m}_{\pm}=\oplus_{\gamma \neq 0} A_{\gamma}$. Consequently, $(\alpha-\beta)[X,Y]+\beta[Z,X]+\alpha[Z,Y]=0$, that
proves the assertion.

Let us prove 3). Take any $U\in A_{\beta}$, $\beta \not\in \{0,\alpha\}$. Then, using 2), we get for some $Z\in \mathfrak{h}$
$$
([X,Y],U)=\beta^{-1}B([X,Y],U)=-\beta^{-1}B(Y,[X,U])=
$$
$$
\beta^{-1}(\alpha-\beta)^{-1}B(Y,\beta[Z,X]+\alpha[Z,U])=\alpha(\alpha-\beta)^{-1}(Y,[Z,X])=0,
$$
since $[Z,U] \in A_{\beta}$ and $([\mathfrak{h},X],Y)=0$.
Note also that $([\mathfrak{h},X],Y)=0$ implies $0=B([\mathfrak{h},X],Y)=B(\mathfrak{h},[X,Y])$. Therefore, we see that $[X,Y] \in A_0\oplus A_{\alpha}$,
that proves the third assertion.
\end{proof}
\smallskip

\begin{remark}
Note that $A_0=\mathfrak{n(g)}$ by Theorem \ref{strucrad1}. If we have $\mathfrak{m}=A_{\alpha}$ for some $\alpha < 0$, then the space $(G/H, \rho)$
is normal homogeneous, because $(\cdot, \cdot)=\alpha^{-1} B$ on $\mathfrak{m}$ and $\rho$ is generated by the invariant  positive definite
form $\alpha^{-1} B$ on the Lie algebra $\mathfrak{g}$. Of course, such space is geodesic orbit: to check it, it suffices to put $Z=0$ for all $X \in \mathfrak{m}$
in the GO-condition. On the other hand,
if  $\mathfrak{m}=A_{\alpha}$ for some $\alpha > 0$, then the space $(G/H, \rho)$ is symmetric of non-compact type (see e.~g. \cite{Hel}),
hence, naturally reductive. Indeed, $\mathfrak{r(g)}$ is trivial by Theorem \ref{strucrad1}
(indeed, $\mathfrak{n(g)}=A_0$ is trivial, hence $\mathfrak{r(g)}$ is also trivial) and $\mathfrak{h}$ should coincide with maximal compactly embedded
subalgebra in semisimple Lie algebra $\mathfrak{g}$.
\end{remark}

Now, we consider the case when there are several eigenspaces of the operator $A$ with non-zero eigenvalues.
For any $V \in \mathfrak{g}$ we denote the centralizer of $V$ in $\mathfrak{h}$ by $C_{\mathfrak{h}}(V)$.

\begin{theorem}\label{centaction1}
For any $X\in A_{\alpha}$ and $Y\in A_{\beta}$, where $0\neq \alpha\neq \beta\neq 0$, there are $Z_1 \in C_{\mathfrak{h}}(X)$  and $Z_2 \in C_{\mathfrak{h}}(Y)$
such that
$[X,Y]=[Z_2,X]+[Z_1,Y]$. In particular, if $C_{\mathfrak{h}}(Y)=\{0\}$ {\rm(}respectively,  $C_{\mathfrak{h}}(X)=\{0\}${\rm)}, then $[X,Y] \in A_{\beta}$
{\rm(}respectively, $[X,Y] \in A_{\alpha}${\rm)}. Consequently, the equality $C_{\mathfrak{h}}(Y)=C_{\mathfrak{h}}(X)=\{0\}$ implies $[X,Y]=0$.
\end{theorem}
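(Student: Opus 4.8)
The plan is to upgrade the relation supplied by Proposition \ref{gokilf}, part 2), to one in which the correcting element of $\mathfrak{h}$ is split between the two centralizers.

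First I would invoke Proposition \ref{gokilf}, part 2): for the given $X\in A_{\alpha}$, $Y\in A_{\beta}$ there is $Z\in\mathfrak{h}$ with $(\beta-\alpha)[X,Y]=\beta[Z,X]+\alpha[Z,Y]$, and moreover $[X,Y]\in A_{\alpha}\oplus A_{\beta}$. Writing $[X,Y]=W_{\alpha}+W_{\beta}$ with $W_{\alpha}\in A_{\alpha}$, $W_{\beta}\in A_{\beta}$, and using that $[Z,X]\in A_{\alpha}$ and $[Z,Y]\in A_{\beta}$ (both eigenspaces being $\ad(\mathfrak{h})$-invariant), I would match eigencomponents to get $W_{\alpha}=\tfrac{\beta}{\beta-\alpha}[Z,X]$ and $W_{\beta}=\tfrac{\alpha}{\beta-\alpha}[Z,Y]$. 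Since $[Z_2,X]\in A_{\alpha}$ and $[Z_1,Y]\in A_{\beta}$ for any $Z_1,Z_2\in\mathfrak{h}$, the desired identity $[X,Y]=[Z_2,X]+[Z_1,Y]$ is equivalent to the two separate membership statements $W_{\alpha}\in[C_{\mathfrak{h}}(Y),X]$ and $W_{\beta}\in[C_{\mathfrak{h}}(X),Y]$. Granting these, the three ``in particular'' assertions are immediate: if $C_{\mathfrak{h}}(Y)=\{0\}$ then $W_{\alpha}\in[\{0\},X]=0$, so $[X,Y]=W_{\beta}\in A_{\beta}$; symmetrically for $C_{\mathfrak{h}}(X)=\{0\}$; and if both vanish then $[X,Y]=0$.

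The main tool I would use for the two membership statements is a skew-symmetry observation. Consider the linear map $S\colon A_{\alpha}\to A_{\alpha}$, $S(v)=\bigl([Y,v]\bigr)_{\alpha}$, the $(\cdot,\cdot)$-orthogonal projection of $[Y,v]$ to $A_{\alpha}$, so that $W_{\alpha}=-S(X)$. Using the $\ad$-invariance of $B$ together with $B(w,v)=\alpha(w,v)$ for $v\in A_{\alpha}$, one checks $([Y,v],w)=-(v,[Y,w])$ for all $v,w\in A_{\alpha}$; hence $S$ is skew-symmetric with respect to $(\cdot,\cdot)$. Moreover $S$ commutes with the action on $A_{\alpha}$ of the compact subgroup of $H$ with Lie algebra $C_{\mathfrak{h}}(Y)$, since elements of $C_{\mathfrak{h}}(Y)$ annihilate $Y$ and preserve $A_{\alpha}$. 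Finally, applying Proposition \ref{gokilf}, part 2), to the pair $(v,Y)$ for each $v$ gives $S(v)\in[\mathfrak{h},v]$; in particular $W_{\alpha}=-S(X)\in[\mathfrak{h},X]$. The symmetric discussion on $A_{\beta}$ produces a skew-symmetric, $C_{\mathfrak{h}}(X)$-equivariant operator $\widetilde{S}$ with $W_{\beta}=\widetilde{S}(Y)\in[\mathfrak{h},Y]$.

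The heart of the argument, and the step I expect to be the main obstacle, is to pass from $W_{\alpha}\in[\mathfrak{h},X]$ to the sharper $W_{\alpha}\in[C_{\mathfrak{h}}(Y),X]$ (and symmetrically). Concretely it suffices to show that the relation of Proposition \ref{gokilf}, part 2), can be solved with $Z\in C_{\mathfrak{h}}(X)+C_{\mathfrak{h}}(Y)$, for then writing $Z=a+b$ with $a\in C_{\mathfrak{h}}(X)$, $b\in C_{\mathfrak{h}}(Y)$ yields $W_{\alpha}=\tfrac{\beta}{\beta-\alpha}[b,X]$ and $W_{\beta}=\tfrac{\alpha}{\beta-\alpha}[a,Y]$, so that $Z_2=\tfrac{\beta}{\beta-\alpha}b$ and $Z_1=\tfrac{\alpha}{\beta-\alpha}a$ work. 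Here I would exploit the rigidity of $S$: decompose $\mathfrak{h}$ orthogonally with respect to $-B|_{\mathfrak{h}}$ and decompose $A_{\alpha}$ into $C_{\mathfrak{h}}(Y)$-isotypic components by complete reducibility, and then use the skew-symmetry of $S$ and of each $\ad(Z)|_{\mathfrak{m}}$, $Z\in\mathfrak{h}$, to show that the part of $Z$ lying outside $C_{\mathfrak{h}}(X)+C_{\mathfrak{h}}(Y)$ contributes nothing to $[Z,X]$ and $[Z,Y]$. The difficulty is genuine: the freedom in $Z$ coming from the GO-property for $X+Y$ is only $C_{\mathfrak{h}}(X)\cap C_{\mathfrak{h}}(Y)$, so reaching $C_{\mathfrak{h}}(X)+C_{\mathfrak{h}}(Y)$ cannot follow from the GO-property alone and must be forced by the skew-symmetry and equivariance input established above.
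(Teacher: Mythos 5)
Your reduction is correct as far as it goes: matching the $A_{\alpha}$- and $A_{\beta}$-components of the relation $(\beta-\alpha)[X,Y]=\beta[Z,X]+\alpha[Z,Y]$ is legitimate, and you correctly identify that the whole theorem is equivalent to showing that this relation can be solved with $Z\in C_{\mathfrak{h}}(X)+C_{\mathfrak{h}}(Y)$. But that step is precisely what you never prove: your final paragraph is a plan (isotypic decompositions, ``rigidity'' of the skew-symmetric operator $S$), not an argument, and you yourself flag it as the main obstacle. So the proposal has a genuine gap, and it sits exactly at the heart of the theorem. Worse, your diagnosis of the obstacle is wrong: you claim that since the freedom in $Z$ for the single vector $X+Y$ is only $C_{\mathfrak{h}}(X)\cap C_{\mathfrak{h}}(Y)$, the inclusion $Z\in C_{\mathfrak{h}}(X)+C_{\mathfrak{h}}(Y)$ ``cannot follow from the GO-property alone.'' It does follow from the GO-property alone --- the missing idea is simply to use the GO-property of a \emph{second} vector, namely $X-Y$, rather than to seek extra representation-theoretic input.

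Concretely, the paper's proof is a short polarization trick. Apply Proposition \ref{gokilf}, 2) to the pair $(X,Y)$ to get $Z\in\mathfrak{h}$ with $(\beta-\alpha)[X,Y]=\beta[Z,X]+\alpha[Z,Y]$, and apply it again to the pair $(X,-Y)$ (i.e., invoke the GO-condition for the vector $X-Y$) to get some $Z'\in\mathfrak{h}$ with $-(\beta-\alpha)[X,Y]=\beta[Z',X]-\alpha[Z',Y]$. Adding the two relations gives
\begin{equation*}
\beta[Z+Z',X]+\alpha[Z-Z',Y]=0 .
\end{equation*}
Since $A_{\alpha}$ and $A_{\beta}$ are $\ad(\mathfrak{h})$-invariant, $[Z+Z',X]\in A_{\alpha}$ and $[Z-Z',Y]\in A_{\beta}$, and $A_{\alpha}\cap A_{\beta}=0$ forces both brackets to vanish separately; hence $Z+Z'\in C_{\mathfrak{h}}(X)$ and $Z-Z'\in C_{\mathfrak{h}}(Y)$. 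Writing $Z=\tfrac12(Z+Z')+\tfrac12(Z-Z')$ exhibits $Z\in C_{\mathfrak{h}}(X)+C_{\mathfrak{h}}(Y)$, which is exactly the statement you reduced the theorem to; substituting into your formulas for $W_{\alpha}$ and $W_{\beta}$ yields $Z_1=\tfrac{\alpha}{2(\beta-\alpha)}(Z+Z')\in C_{\mathfrak{h}}(X)$ and $Z_2=\tfrac{\beta}{2(\beta-\alpha)}(Z-Z')\in C_{\mathfrak{h}}(Y)$ with $[X,Y]=[Z_2,X]+[Z_1,Y]$. None of the machinery you set up (the operator $S$, its skew-symmetry and $C_{\mathfrak{h}}(Y)$-equivariance, isotypic decompositions) is needed, and it is doubtful it could be made to close the gap, since it only uses the GO-condition for vectors of the form $v+Y$ with $v\in A_{\alpha}$ and never exploits sign reversal in the $A_{\beta}$-argument.
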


\begin{proof}
By 2) of Proposition \ref{gokilf}, for given $X\in A_{\alpha}$ and $Y\in A_{\beta}$, there is $Z \in \mathfrak{h}$ such that
$(\beta-\alpha)[X,Y]=\beta[Z,X]+\alpha[Z,Y]$. Similarly, for $X\in A_{\alpha}$ and $-Y\in A_{\beta}$, there is $Z' \in \mathfrak{h}$ such that
$(\beta-\alpha)[X,-Y]=\beta[Z',X]+\alpha[Z',-Y]$. From the above two equalities we get $\beta[Z+Z',X]+\alpha[Z-Z',Y]=0$.
It is clear that $Z+Z' \in C_{\mathfrak{h}}(X)$ and $Z-Z'\in C_{\mathfrak{h}}(Y)$. Now, consider
$Z_1:=\frac{\alpha}{2(\beta-\alpha)}(Z+Z')$ and $Z_2:=\frac{\beta}{2(\beta-\alpha)}(Z-Z')$,
then $Z=\frac{\beta-\alpha}{\alpha} Z_1+\frac{\beta-\alpha}{\beta} Z_2$ and
$(\beta-\alpha)[X,Y]=\beta[Z,X]+\alpha[Z,Y]$ implies $[X,Y]=[Z_2,X]+[Z_1,Y]$.
\end{proof}
\medskip

This theorem allows us to apply new tools to the study of geodesic orbit Riemannian manifolds.
These tools related to the problem of classifying principal orbit types for linear actions of
compact Lie groups. If a compact linear Lie group $K$ acts on some finite-dimensional vector space $V$
(in other terms, we have a representation of a Lie group on the space~$V$),
then almost all points of $V$ are situated on the orbits of $K$, that are pairwise isomorphic as $K$-manifolds.
Such orbits are called orbits in general position. The isotropy groups  of all points on such orbits are conjugate
in $K$, the class of these isotropy groups is called a {\it principal isotropy group} for the linear group $K$
and the corresponding Lie algebra is called a {\it principal isotropy algebra} or a {\it stationary subalgebra of points in general position}.
Roughly speaking, the principal isotropy algebra is trivial for general linear Lie groups $K$, but it is not the case for some special linear groups.
\smallskip

The classification of linear actions of simple compact connected Lie groups with
non-trivial connected principal isotropy subgroups, has already
been carried out in \cite{Kram}, \cite{Hsiang}, and  \cite{Ela1}.
See details in \S 3 of Chapter 5 in \cite{Hsiangbook}.
In \S 4 and \S 5 of Chapter 5 in \cite{Hsiangbook}, one can find
a description of more general compact connected Lie groups with
non-trivial connected principal isotropy subgroups (see e.~g. Theorem (V.7') in \cite{Hsiangbook}).

In 1972, A.G.~\`{E}lashvili described all stationary subalgebras of points in general position for the case in which $G$ is
simple and $V$ is arbitrary (see \cite{Ela1}) and also if $G$ is semisimple and $V$ is irreducible (see \cite{Ela2}).
According to \cite{Il}, a linear action of a semisimple group $K$ on
a finite-dimensional vector space $V$ is said to be locally strongly effective if
every simple normal subgroup of $K$ acts nontrivially on every irreducible submodule
$W \subset V$.
In \cite{Il}, D.G.~Il'inskii obtained a complete description of the locally strongly effective actions
of a semisimple complex algebraic group $K$ on a complex vector space $V$ with nontrivial stationary subalgebra of points in general position.
\smallskip

\begin{theorem}\label{centaction2}
Let $\chi: H \to O(\mathfrak{m})$ be the isotropy representation for $(G/H, \rho)$ and
let $\chi_{\mathfrak{p}}$  be its irreducible subrepresentation on a submodule $\mathfrak{p} \subset A_{\alpha}$, $\alpha \neq 0$.
Then, the following assertions hold:

{\rm 1)} For any $A_{\beta}$, where $\alpha\neq \beta\neq 0$,  we get $[A_{\beta}, \mathfrak{p}]\subset A_{\beta}\oplus \mathfrak{p}$;

{\rm 2)} If $\mathfrak{p}^{\perp}$ is the $B$-orthogonal {\rm(}or, equivalently, $(\cdot,\cdot)$-orthogonal{\rm)} complement to
$\mathfrak{p}$ in $A_{\alpha}$, then
$[\mathfrak{p}, \mathfrak{p}^{\perp}]\subset A_0\oplus A_{\alpha}$;

{\rm 3)} If the principal isotropy algebra of the representation $\chi_{\mathfrak{p}}$ is trivial, then $[A_{\beta}, \mathfrak{p}]\subset  \mathfrak{p}$;
\end{theorem}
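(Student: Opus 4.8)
The plan is to prove the three assertions in order, with the first two being almost immediate consequences of Proposition \ref{gokilf} and the third the genuinely new contribution that exploits the triviality of the principal isotropy algebra.

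First I would establish assertion 1). Fix $X \in \mathfrak{p} \subset A_\alpha$ and $Y \in A_\beta$ with $\alpha \neq \beta \neq 0$. By 2) of Proposition \ref{gokilf} we already know $[A_\alpha, A_\beta] \subset A_\alpha \oplus A_\beta$, and since $\mathfrak{p}$ is $\ad(\mathfrak{h})$-invariant and $A_\beta$ is $\ad(\mathfrak{h})$-invariant, Theorem \ref{centaction1} gives $[X,Y] = [Z_2, X] + [Z_1, Y]$ with $Z_1 \in C_{\mathfrak{h}}(X)$, $Z_2 \in C_{\mathfrak{h}}(Y)$. The term $[Z_2, X]$ lies in $\mathfrak{p}$ because $\mathfrak{p}$ is $\ad(\mathfrak{h})$-invariant, and $[Z_1, Y] \in A_\beta$ because $A_\beta$ is $\ad(\mathfrak{h})$-invariant; hence $[A_\beta, \mathfrak{p}] \subset A_\beta \oplus \mathfrak{p}$. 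Assertion 2) is a direct application of 3) of Proposition \ref{gokilf}: for $X \in \mathfrak{p}$ and $Y \in \mathfrak{p}^\perp$ (the complement taken inside $A_\alpha$), the $\ad(\mathfrak{h})$-invariance of $\mathfrak{p}$ together with the orthogonality $([\mathfrak{h},X],Y) = -([X,\mathfrak{h}]\text{-type term})$ must be checked to vanish; since $[\mathfrak{h}, X] \subset \mathfrak{p}$ and $Y \perp \mathfrak{p}$, indeed $([\mathfrak{h},X],Y)=0$, so 3) of Proposition \ref{gokilf} yields $[X,Y] \in A_0 \oplus A_\alpha$, i.e. $[\mathfrak{p}, \mathfrak{p}^\perp] \subset A_0 \oplus A_\alpha$.

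The main work is assertion 3), and here the plan is to upgrade 1) by killing the $A_\beta$-component of $[A_\beta, \mathfrak{p}]$ using the triviality of the principal isotropy algebra of $\chi_{\mathfrak{p}}$. The key observation is that in the decomposition $[X,Y] = [Z_2,X] + [Z_1, Y]$ from Theorem \ref{centaction1}, the coefficient $Z_2 \in C_{\mathfrak{h}}(Y)$ while $Z_1 \in C_{\mathfrak{h}}(X)$: the $A_\beta$-part of the bracket comes precisely from $[Z_1, Y]$ where $Z_1$ centralizes $X$. The triviality of the principal isotropy algebra means that for $X$ in general position in $\mathfrak{p}$ one has $C_{\mathfrak{h}}(X) \cap \ker(\chi_{\mathfrak{p}})^{\text{-type} } = 0$; more precisely, the stabilizer in $\mathfrak{h}$ of a generic point of $\mathfrak{p}$ acts trivially, so that a $Z_1$ centralizing a generic $X$ must act trivially on $\mathfrak{p}$, and I would then argue that such $Z_1$ contributes nothing forcing the $A_\beta$-component to survive. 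The plan is to first prove the inclusion $[A_\beta, \mathfrak{p}] \subset \mathfrak{p}$ for generic $X \in \mathfrak{p}$, and then extend to all of $\mathfrak{p}$ by continuity/linearity, since $[A_\beta, \mathfrak{p}] \subset A_\beta \oplus \mathfrak{p}$ is a fixed subspace condition and the $A_\beta$-projection is a linear map vanishing on a Zariski-dense (open dense) set of $X$.

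The hard part will be making precise the link between ``$Z_1 \in C_{\mathfrak{h}}(X)$ for generic $X$'' and ``$[Z_1, Y] = 0$ in $A_\beta$''. The triviality of the principal isotropy algebra tells us that the common stabilizer of generic points is trivial, but Theorem \ref{centaction1} produces a single $Z_1$ attached to one pair $(X,Y)$, not an element of a common stabilizer. The delicate step is therefore to show that the $A_\beta$-component of $[X,Y]$, viewed as a map $\mathfrak{p} \to A_\beta$ for fixed $Y$, is $\ad(\mathfrak{h})$-equivariant and vanishes because its effect can be absorbed by the centralizer action; one must verify that for generic $X$ the element $Z_1$ can be taken to lie in $C_{\mathfrak{h}}(\mathfrak{p})$ (the full pointwise centralizer, which is the principal isotropy algebra and hence trivial), forcing $[Z_1, Y] = 0$. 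I expect that the cleanest route is to argue that the projection $\pi_{A_\beta}([X,Y])$ defines an $H$-equivariant homomorphism from $\mathfrak{p}$ into $A_\beta$ whose kernel, by triviality of the principal isotropy algebra, must be all of $\mathfrak{p}$ — this equivariance and genericity argument is where the real care is needed.
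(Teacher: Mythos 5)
Your treatment of assertions 1) and 2) is correct and coincides with the paper's proof: 1) is Theorem \ref{centaction1} combined with the $\ad(\mathfrak{h})$-invariance of $\mathfrak{p}$ and $A_{\beta}$, and 2) is part 3) of Proposition \ref{gokilf} applied after observing $([\mathfrak{h},X],Y)=0$ for $X\in\mathfrak{p}$, $Y\in\mathfrak{p}^{\perp}$. The genuine gap is in assertion 3), and it originates in a misreading of the hypothesis: you identify the principal isotropy algebra of $\chi_{\mathfrak{p}}$ with the pointwise centralizer $C_{\mathfrak{h}}(\mathfrak{p})$ (your ``full pointwise centralizer, which is the principal isotropy algebra''), i.e.\ with the kernel of the action, and consequently read triviality as ``a $Z_1$ centralizing a generic $X$ must act trivially on $\mathfrak{p}$''. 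That is not the definition the paper uses (nor the standard one): the principal isotropy algebra --- the stationary subalgebra of points in general position --- is the isotropy algebra $C_{\mathfrak{h}}(X)=\{Z\in\mathfrak{h}\,|\,[Z,X]=0\}$ of a \emph{single} point $X$ in general position, taken as a subalgebra of $\mathfrak{h}$. It always contains the kernel $C_{\mathfrak{h}}(\mathfrak{p})$ but is in general strictly larger (for $so(3)$ acting on $\mathbb{R}^3$ the kernel is trivial while the principal isotropy algebra is $so(2)$). Read correctly, the hypothesis of 3) says precisely that $C_{\mathfrak{h}}(X)=\{0\}$ for every $X$ in an open dense subset of $\mathfrak{p}$, and then there is no ``hard part'' at all: for such $X$ and any $Y\in A_{\beta}$, the element $Z_1\in C_{\mathfrak{h}}(X)$ produced by Theorem \ref{centaction1} is forced to be zero, hence $[X,Y]=[Z_2,X]\in\mathfrak{p}$; since $X\mapsto[X,Y]$ is linear (hence continuous) and $\mathfrak{p}$ is closed, the inclusion $[X,Y]\in\mathfrak{p}$ extends from the dense set to all of $\mathfrak{p}$. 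This is the paper's entire proof of 3).

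The substitute argument you sketch would not close the gap. For fixed $Y\in A_{\beta}$, the map $X\mapsto\pi_{A_{\beta}}([X,Y])$ is linear but \emph{not} $\ad(\mathfrak{h})$-equivariant: by the Jacobi identity $[Z,[X,Y]]=[[Z,X],Y]+[X,[Z,Y]]$, and the second term, in which $Z$ moves $Y$ inside $A_{\beta}$, destroys equivariance except when $Z\in C_{\mathfrak{h}}(Y)$. Moreover, even if equivariance held, irreducibility of $\mathfrak{p}$ plus a Schur-type argument would only show that the kernel of this map is $\{0\}$ or all of $\mathfrak{p}$; you would still need an independent reason to exclude the injective case. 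So your proposed route has two gaps of its own, whereas the intended argument requires nothing beyond the correct definitional reading of ``principal isotropy algebra'' and the continuity step you already anticipated.
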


\begin{proof} By Theorem \ref{centaction1}, for any $X\in \mathfrak{p} \subset A_{\alpha}$ and $Y\in A_{\beta}$
there are $Z_1 \in C_{\mathfrak{h}}(X)$  and $Z_2 \in C_{\mathfrak{h}}(Y)$
such that $[X,Y]=[Z_2,X]+[Z_1,Y]$. Therefore, $[X,Y] \in A_{\beta}\oplus \mathfrak{p}$.

The inclusion $[\mathfrak{p}, \mathfrak{p}^{\perp}]\subset A_0\oplus A_{\alpha}$ follows from 3) of Proposition \ref{gokilf},
since $ \mathfrak{p}$ is $\ad(\mathfrak{h})$-invariant.

If the principal isotropy algebra of the representation $\chi_{\mathfrak{p}}$ is trivial, then for almost all $X\in \mathfrak{p}$, the linear space
$C_{\mathfrak{h}}(X)$ is trivial, hence $Z_1=0$ and $[X,Y] \in \mathfrak{p}$. By continuity, $[X,Y] \in \mathfrak{p}$ holds for all
$X\in \mathfrak{p}$.
\end{proof}

\begin{corollary}\label{centaction3}
If, in the above notations, $[A_{\beta}, \mathfrak{p}]\not\subset  \mathfrak{p}$, then the principal isotropy algebra of the representation
$\chi_{\mathfrak{p}}:H \rightarrow O(\mathfrak{p})$
is non-trivial, hence, has dimensions $\geq 1$.
\end{corollary}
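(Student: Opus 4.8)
The plan is to observe that this corollary is nothing more than the contrapositive of assertion 3) in Theorem \ref{centaction2}, so essentially no new work is required; the entire substance already sits in that theorem. First I would recall the precise form of statement 3): if the principal isotropy algebra of the representation $\chi_{\mathfrak{p}}$ is trivial, then $[A_{\beta}, \mathfrak{p}]\subset \mathfrak{p}$. Negating the conclusion of that implication produces exactly the standing hypothesis of the present corollary, namely $[A_{\beta}, \mathfrak{p}]\not\subset \mathfrak{p}$.

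Concretely, I would argue by contradiction. Suppose $[A_{\beta}, \mathfrak{p}]\not\subset \mathfrak{p}$ and assume, for contradiction, that the principal isotropy algebra of $\chi_{\mathfrak{p}}$ is trivial. Then Theorem \ref{centaction2}(3) applies verbatim and forces $[A_{\beta}, \mathfrak{p}]\subset \mathfrak{p}$, contradicting the hypothesis. Hence the principal isotropy algebra of $\chi_{\mathfrak{p}}$ cannot be trivial. The concluding clause, that it then has dimension $\geq 1$, needs no separate argument: a non-trivial linear space (here, a non-trivial subalgebra of $\mathfrak{h}$) has dimension at least one by definition.

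I expect no genuine obstacle at the level of the corollary itself. Where the real content lies is one step back, in the proof of Theorem \ref{centaction2}(3): there one uses the decomposition $[X,Y]=[Z_2,X]+[Z_1,Y]$ with $Z_1\in C_{\mathfrak{h}}(X)$ and $Z_2\in C_{\mathfrak{h}}(Y)$ supplied by Theorem \ref{centaction1}, together with the genericity observation that triviality of the principal isotropy algebra of $\chi_{\mathfrak{p}}$ means $C_{\mathfrak{h}}(X)=\{0\}$ for $X$ in a dense subset of $\mathfrak{p}$, so that $Z_1=0$ and $[X,Y]\in\mathfrak{p}$ first for generic $X$ and then, by continuity of the bracket, for all $X\in\mathfrak{p}$. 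Since that theorem may be assumed, the corollary follows immediately by contraposition, and I would present it in just a couple of lines.
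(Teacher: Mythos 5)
Your proposal is correct and matches the paper's intent exactly: Corollary \ref{centaction3} is stated in the paper without a separate proof precisely because it is the immediate contrapositive of assertion 3) of Theorem \ref{centaction2}, which is exactly the argument you give. Nothing further is needed.
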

\smallskip

Corollary \ref{centaction3} shows that the algebraic structure of geodesic orbit Riemannian spaces is very special.
This observation could help to understand  geodesic orbit Riemannian spaces on a more deep level.

\section{Conclusion}

Finally, we propose some open questions, that (at least in our opinion) are quite interesting and important
for the theory of geodesic orbit Riemannian spaces.

\begin{quest}\label{que0}
Produce new examples of geodesic orbit Riemannian solvmanifolds with using of Proposition \ref{gonil2}.
\end{quest}

Compare with Example \ref{ex2}.

\begin{quest}\label{que1}
Classify geodesic orbit Riemannian spaces $(G/H,\rho)$ with small number of eigenvalues of the operator $A$ {\rm(}see the previous section{\rm)}.
\end{quest}

It is interesting even the case of two different eigenvalues (one of which could be zero) in this question.

\begin{quest}\label{que2}
Classify geodesic orbit Riemannian spaces $(G/H,\rho)$ with small number {\rm(}$2,3,\dots${\rm)} of irreducible component in the isotropy representation.
\end{quest}

It should be noted that all isotropy irreducible spaces are naturally reductive and, hence, geodesic orbit.

\begin{quest}\label{que3}
Classify all homogeneous spaces $G/H$ such that all $G$-invariant Riemannian metrics on $G/H$ are geodesic orbit.
\end{quest}

Note that isotropy irreducible spaces give obvious examples of a required type. Commutative Lie groups constitute another obvious type of examples.
More interesting examples are weakly symmetric spaces, see details e.~g. in \cite{AV}.

\bigskip

{\bf Acknowledgements.}
The author is indebted to Prof.  Valerii Berestovskii, to Prof. Carolyn Gordon, and to Prof.  \`{E}rnest~Vinberg
for helpful discussions concerning this paper.

\vspace{10mm}

\bibliographystyle{amsunsrt}

\end{document}